\theoremstyle{plain}
\newtheorem{thm"}{Th\'eor\`eme}[]
\newtheorem{thm}{Th\'eor\`eme}[section]
\newtheorem{prop}[thm]{Proposition}
\newtheorem{lemme}[thm]{Lemme}
\newtheorem{cor}[thm]{Corollaire}
\newtheorem{conj}[thm]{Conjecture}
\theoremstyle{remark}
\newtheorem{rem}[thm]{Remarque}
\newtheorem{sch}[thm]{Scholie}
\theoremstyle{definition}
\newtheorem{paragr}[thm]{}
\newcommand\pdfoo{\texorpdfstring{$\infty$}{\unichar{"221E}}}
\newcommand{\noopsort}[1]{\relax}
\newcommand{\W}{\mathcal W}
\newcommand{\Wfolk}{\W_{\mathrm{folk}}}
\newcommand{\Wpol}{\W_{\mathrm{pol}}}
\newcommand{\Wthom}{\W_{\mathrm{Thom}}}
\newcommand{\Wsmp}{\W_{\pref{\Catsmp}}}
\newcommand{\Fl}{\mathrm{Fl}}
\let\nbd\nobreakdash
\let\e\varepsilon
\let\Eta\eta
\let\Eps\varepsilon
\renewcommand{\leq}{\leqslant}
\renewcommand{\geq}{\geqslant}
\newcommand{\SLF}{système local faible}
\newcommand{\SLFS}{systèmes locaux faibles}
\newcommand{\SL}{système local}
\newcommand{\SLS}{systèmes locaux}
\newcommand{\clh}{limite inductive homotopique}
\newcommand{\clhs}{limites inductives homotopiques}
\newcommand{\Ab}{\mathrm{Ab}}
\newcommand{\AB}{L}
\newcommand{\Ens}{\mathrm{Ens}}
\newcommand{\Ho}{\mathrm{Ho}}
\newcommand{\C}{\mathrm{C}}
\newcommand{\LL}{\mathrm{L}}
\newcommand{\N}{\mathrm{N}}
\newcommand{\Tor}{\mathrm{Tor}}
\newcommand{\nd}{\mathrm{nd}}
\renewcommand{\H}{\mathrm{H}}
\newcommand{\Hp}[1]{\H^{\mathrm{pol}\vrule depth 2.3pt width 0pt}_{#1}}
\newcommand{\HP}{\Hp{\scriptscriptstyle \bullet}}
\newcommand{\HT}{\H_{\scriptscriptstyle \bullet}}
\newcommand{\F}{F}
\newcommand{\G}{G}
\newcommand{\E}{E}
\newcommand{\Hom}{\mathrm{Hom}}
\newcommand{\Homint}{\operatorname{\underline{\mathrm{Hom}}}}
\newcommand{\Comp}{\mathrm{Comp}}
\newcommand{\Der}{\mathrm{Der}}
\newcommand{\Cat}{\mathrm{Cat}}
\newcommand{\ooCat}{\infty\hbox{-}\mathrm{Cat}}
\newcommand{\ooCCat}{\infty\hbox{-}C\hbox{-}\mathrm{Cat}}
\newcommand{\Catsmp}{{\mathbf \Delta}}
\newcommand{\smp}[1]{\Delta_{#1}}
\newcommand{\Or}[1]{\mathcal{O}_{#1}}
\newcommand{\Int}{\textstyle{\int}}
\newcommand{\tauint}[1]{\tau^{}_{#1}}
\newcommand{\rev}[1]{\widetilde{#1}}
\newcommand{\pref}[1]{\widehat{#1}}
\newcommand{\Yon}[1]{\widetilde{#1}}
\newcommand{\transf}[2]{{#1}^{}_{#2}}
\newcommand{\comp}[1]{*^{}_{#1}}
\newcommand{\cm}[2]{\mathchoice {#1\raise -1.8pt\vbox{\hbox{$\kern -.8pt/#2$}}} {#1\raise -1.8pt\vbox{\hbox{$\kern -.8pt/#2$}}} {#1\raise -1.8pt\vbox{\hbox{$\scriptstyle\kern -.8pt /#2$}}} {#1\raise -1.8pt\vbox{\hbox{$\scriptscriptstyle\kern -.8pt /#2$}}} }
\newcommand{\mc}[2]{\mathchoice {\raise -1.8pt\vbox{\hbox{$#1\backslash$}}#2} {\raise -1.8pt\vbox{\hbox{$#1\backslash$}}#2} {\raise -1.8pt\vbox{\hbox{$\scriptstyle#1\backslash$}}#2} {\raise -1.8pt\vbox{\hbox{$\scriptscriptstyle#1\backslash$}}#2} }
\def\hocolim{\mathop{\oalign{\rm holim\cr
\hidewidth$\mathrel{\hbox to 26.5pt{\rightarrowfill}}$\hidewidth\cr}}}%
\def\shocolim{\mathop{\oalign{\scriptsize\rm holim\cr
\hidewidth$\mathrel{\hbox to 21.5pt{\rightarrowfill}}$\hidewidth\cr}}}%
\author{Léonard GUETTA}
\address{Universiteit Utrecht, Nederland}
\email{l.s.guetta@uu.nl}
\urladdr{\href{https://leoguetta.github.io/}{https://leoguetta.github.io/}}
\author{Georges MALTSINIOTIS}
\address{CNRS IMJ-PRG, Universit\'e Paris Cit\'e, France}
\email{georges.maltsiniotis@imj-prg.fr}
\urladdr{\href{https://webusers.imj-prg.fr/~georges.maltsiniotis/}{https://webusers.imj-prg.fr/\raise -3.3pt\vbox{\hbox{$\widetilde{ \ }\,$}}georges.maltsiniotis/}}
\title{Homologie polygraphique des systèmes locaux}
\begin{document}

\frontmatter

\begin{abstract} 
Dans cet article, on introduit une notion d'homologie polygraphique d'une $\infty$\nbd-catégorie stricte à coefficients dans un \SL, généralisant l'homologie polygraphique à coefficients dans $\mathbb Z$, introduite par François Métayer. On montre que l'homologie d'un ensemble simplicial à coefficients dans un \SL{} coïncide avec l'homologie polygraphique de son image par l'adjoint à gauche du nerf de Street à coefficients dans le \SL{} correspondant. On définit dans ce cadre un morphisme de comparaison entre l'homologie polygraphique d'une $\infty$\nbd-catégorie stricte et l'homologie de son nerf de Street, et on montre que ce morphisme est un isomorphisme pour les (1\nbd-)catégories. Il n'en est pas de même pour une $\infty$\nbd-catégorie stricte arbitraire. Néanmoins, on conjecture que pour une construction analogue dans le cadre des $\infty$\nbd-catégories faibles\penalty-500{} \og à la Grothendieck\fg{} on obtiendrait toujours un isomorphisme.
\end{abstract}

\begin{altabstract} 
In this article, we introduce a notion of polygraphic homology of a strict $\infty$\nbd-category with coefficients in a local system, generalizing the polygraphic homology with coefficients in $\mathbb Z$, introduced by François Métayer. We show that the homology of a simplicial set with coefficients in a local system coincides with the polygraphic homology of its image by the left adjoint of the Street nerve with coefficients in the corresponding local system. We define in this framework a comparison morphism between the polygraphic homology of a strict $\infty$\nbd-category and the homology of its Street nerve, and we show that this morphism is an isomorphism for (1\nbd-)categories. This is not true for an arbitrary strict $\infty$\nbd-category. Nevertheless, we conjecture that for an analogous construction in the framework of weak $\infty$\nbd-categories\penalty-500{} \og à la Grothendieck\fg{} we would always obtain an isomorphism.
\end{altabstract}

\subjclass{18G10, 18G15, 18G35, 18G90, \textbf{18N30}, 18N40, 18N50, 18N65, 55N10, \textbf{55N25}, 55U10, 55U15}
\keywords{$\infty$-catégorie stricte, homologie polygraphique, polygraphe, système local}

\dedicatory{À François Métayer pour son départ à la retraite}

\maketitle
\tableofcontents

\mainmatter

\section*{Introduction}

Dans \emph{Pursuing Stacks}, Grothendieck suggère que l'homologie d'un $\infty$\nbd-groupoïde faible (\og$\infty$\nbd-stack\fg{} dans sa terminologie) se calcule par le complexe dont la composante de degré $n$ est le groupe abélien libre engendré par ses $n$\nbd-cellules et dont la différentielle est définie en prenant la différence entre le but et la source d'une $n$\nbd-cellule~\cite[note~97,~page~29]{PS1}. 
Cette idée est un peu naïve (elle ne donne même pas l'homologie correcte pour le $\infty$\nbd-groupoïde ponctuel) mais comme souvent avec Grothendieck, l'intuition sous-jacente semble correcte, sauf qu'il faudrait prendre un foncteur d'abélianisation plus subtil, et remplacer le $\infty$\nbd-groupoïde considéré par une résolution cofibrante dans un sens adéquat.
\smallbreak

Indépendamment, François Métayer a introduit l'homologie polygraphique d'une $\infty$\nbd-catégorie stricte (qu'on appellera ici simplement $\infty$\nbd-catégorie) en termes de résolutions polygraphiques, dans un travail qu'il précise avoir effectué en collaboration avec Albert Burroni~\cite{FrMet}. La notion de \emph{polygraphe}~\cite{Burr1},~\cite{Burr2} (ou \emph{computad}~\cite{StrPol},~\cite{Or}) est le concept pertinent de $\infty$\nbd-catégorie libre et une \emph{résolution polygraphique} est l'analogue $\infty$\nbd-catégorique d'une résolution libre d'un groupe abélien. 
\smallbreak

On rappelle que les objets $\infty$\nbd-catégorie (stricte) de la catégorie $\Ab$ des groupes abéliens forment une catégorie équivalente à celle des complexes de chaînes de groupes abéliens en degrés positifs~\cite{Bourn}. Le foncteur d'oubli $\ooCat(\Ab)\to\ooCat$, de la catégorie des objets $\infty$\nbd-catégorie stricte de $\Ab$ vers celle des $\infty$\nbd-catégories strictes, admet un adjoint à gauche $\AB$, le \emph{foncteur d'abélianisation}, qui s'identifie donc à un foncteur de $\ooCat$ vers la catégorie $\Comp(\Ab)$ des complexes de groupes abéliens. Si~$X$ est une $\infty$\nbd-catégorie et $n\geq0$, le groupe $\AB(X)_n$ est le \emph{quotient} du groupe abélien libre engendré par les $n$-flèches de $X$ par les relations
$x*_iy=x+y$, pour $0\leq i<n$ et $x,y$ $n$\nbd-flèches $i$\nbd-composables de $X$, où $x*_iy$ désigne le composé de $y$ suivi de $x$, au-dessus de la $i$\nbd-source de $x$, supposée égale au $i$\nbd-but de $y$. Pour $n>0$, la différentielle de l'image dans $\AB(X)_n$ d'une $n$\nbd-cellule est définie par la différence de l'image dans $\AB(X)_{n-1}$ de son but moins celle de sa source. L'\emph{homologie polygraphique} de $X$ est définie en choisissant une résolution polygraphique $P\to X$ de $X$, en posant pour $n\geq0$, $\Hp{n}(X)=\H_{n}(\AB(P))$, et en montrant que cette définition est indépendante du choix de la résolution~\cite{FrMet}.
\smallbreak

L'homologie polygraphique s'interprète plus conceptuellement en termes de la structure de catégorie de modèles \og folk\fg{} sur $\ooCat$, introduite dans~\cite{LMW}. La notion d'équivalence faible pour cette structure généralise celle d'équivalence de catégories, et les objets cofibrants sont les polygraphes~\cite{MetCof}. L'homologie polygraphique s'identifie au foncteur dérivé à gauche du foncteur d'abélianisation. Yves Lafont et François Métayer ont montré que l'homologie polygraphique d'un monoïde (vu comme une $\infty$\nbd-catégorie avec un seul objet et des $i$\nbd-cellules triviales pour $i>1$) coïncide avec son homologie usuelle~\cite{LafMet}. Le premier des auteurs du présent article a montré qu'il en est de même pour toute (1\nbd-)catégorie~\cite{Leonard},~\cite{LeonardTh}. En revanche, pour une $\infty$\nbd-catégorie stricte générale $X$, son homologie polygraphique ne coïncide pas toujours avec son homologie, définie comme étant celle de son nerf de Street~\cite{Or}. Par exemple, si $X$ est la $2$\nbd-catégorie ayant un seul objet, l'unité de cet objet comme seule $1$\nbd-flèche et $\mathbb N$ comme monoïde d'endomorphismes de cette unité, l'homologie polygraphique de $X$ est $\mathbb Z$ en degrés $0$ et $2$, et $0$ ailleurs, tandis que son homologie est $\mathbb Z$ en \emph{toute} dimension paire et $0$ en dimension impaire~\cite[4.5.2]{LeonardTh},~\cite[Theorem~4.9 et Example~4.10]{AraThB}. 
\smallbreak

Le but de cet article est d'introduire et étudier l'homologie polygraphique d'une $\infty$\nbd-catégorie stricte $X$ à coefficients dans un \emph{système local} sur $X$ (voir plus loin), généralisant ainsi l'homologie polygraphique à coefficients constants $\mathbb Z$ de Métayer. Dans un appendice, on adapte cette définition aux $\infty$\nbd-catégories faibles \og à la Grothendieck\fg{} et on conjecture que dans ce cadre l'homologie polygraphique coïncide avec la \og vraie\fg{} homologie.
\smallbreak

Si $X$ est un espace topologique, un \emph{\SL} sur $X$ est un foncteur contravariant $M$ du groupoïde fondamental de $X$ vers la catégorie des groupes abéliens. Autrement dit, à tout point $x$ de $X$, on associe un groupe abélien $M_x$ et à tout chemin $\gamma$ d'origine $x_0$ et d'extrémité $x_1$ un isomorphisme de groupes abéliens $\gamma^*:M_{x_1}\to M_{x_0}$ qui ne dépend que de la classe d'homotopie (à extrémités fixes) de $\gamma$, et ceci de façon compatible à la composition des chemins. L'homologie d'un espace $X$ à coefficients dans un système local $M$ a été définie par Steenrod~\cite[Section~10]{Steen1},~\cite{Steen2}, comme étant l'homologie d'un complexe de groupes abéliens, généralisant celui qui calcule l'homologie de $X$ à coefficients dans un groupe abélien constant. 
\smallbreak

Plus précisément, notons $\Delta_n$ le simplexe topologique standard de dimension $n$, enveloppe convexe de la base canonique $(e_i)_{0\leq i\leq n}$ de $\mathbb{R}^{n+1}$. On définit un complexe de groupes abéliens $\C(X,M)$ comme suit. La composante de dimension $n$ est donnée par la formule
$$\C_n(X,M)=\textstyle\bigoplus\limits_{x\,:\,\Delta_n\to X}M_{x_n:=x(e_n)}\,.$$
La différentielle est définie par
$$
d(x,m)=\textstyle\sum\limits_{0\leq i<n}(-1)^i(d_ix,m)+(-1)^n(d_nx,x_{n-1\,n}^*(m))\,,\quad x:\Delta_n\to X\,,\ m\in M_{x_n}\,,
$$
où $x_{n-1\,n}$ désigne le chemin dans $X$ image par $x$ du segment $e_{n-1}\,e_n$ dans $\Delta_n$, et~$d_ix$ la restriction de $x$ à la $i$\nbd-ème face de $\Delta_n$, enveloppe convexe de la famille $(e_k)_{k\neq i}$ (dont le \og dernier sommet\fg{} est $e_n$ si $i<n$ et $e_{n-1}$ si $i=1$, de sorte que la formule a un sens).
\smallbreak

Ces définitions s'adaptent facilement au cadre des ensembles simpliciaux (voir section~1), et l'homologie d'un espace topologique à coefficients dans un système local~$M$ s'interprète comme l'homologie de son complexe singulier à coefficients dans le système local induit par $M$. De plus, une variante d'un  théorème de Whitehead montre qu'une application continue ou un morphisme d'ensembles simpliciaux est une équivalence faible topologique ou simpliciale si et seulement si il induit une équivalence des groupoïdes fondamentaux et un isomorphisme sur l'homologie à coefficients dans tout système local sur le but du morphisme (voir théorème~\ref{thtopol}).
\smallbreak

Le cas $\infty$\nbd-catégorique est plus subtil. Si $X$ est une $\infty$\nbd-catégorie stricte, la \emph{catégorie fondamentale} de $X$ est la catégorie obtenue de $X$ en oubliant les $i$\nbd-flèches pour $i>1$, et en identifiant deux $1$\nbd-flèches si elles sont reliées par un zigzag de $2$\nbd-flèches. Le \emph{groupoïde fondamental} $\Pi_1(X)$ de $X$ est le groupoïde enveloppant de sa catégorie fondamentale (obtenu en inversant formellement toutes les flèches de cette dernière). Un \emph{système local} sur $X$ est un foncteur contravariant du groupoïde $\Pi_1(X)$ vers la catégorie $\Ab$ des groupes abéliens. Il revient au même de se donner un $\infty$\nbd-foncteur $M:X^\circ\to\Ab$ du dual total de $X$ (obtenu de $X$ en inversant le sens de ses $i$\nbd-flèches pour tout $i>0$) vers la catégorie des groupes abéliens (vue comme une $\infty$\nbd-catégorie dont les $i$\nbd-flèches sont des identités pour $i>1$) tel que l'image par $M$ de toute $1$\nbd-flèche de $X$ soit un isomorphisme. Pour tout objet $x$ de $X$, on notera $M_x$ le groupe abélien correspondant et pour toute $1$\nbd-flèche $x:x_0\to x_1$ de $X$, $x^*=M_x:M_{x_1}\to M_{x_0}$ l'isomorphisme de groupes abéliens image de $x$ par $M$.
\smallbreak

Pour définir l'homologie polygraphique d'une $\infty$\nbd-catégorie $X$ à coefficients dans un système local $M$, on associe d'abord au couple $(X,M)$ un complexe de groupes abéliens $\C(X,M)$ défini comme suit. Pour $n\geq0$, on pose
$$
\C_n(X,M)=\textstyle\bigoplus\limits_{x\in X_n}M_{t_0x}\Bigm/\sim_n\,,
$$
où $X_n$ désigne l'ensemble des $n$\nbd-flèches de $X$, $t_0x$ le $0$\nbd-but de la $n$\nbd-flèche $x$, et $\sim_n$ est la plus petite relation d'équivalence compatible à l'addition engendrée par les relations:
$$
(x_1*_0x_0,m)\sim_n(x_1,m)+(x_0,t_1(x_1)^*(m))\,,\quad m\in M_{t_0(x_1*_0x_0)=t_0x_1}\,,\leqno(*_0)
$$
$$
(x_1*_ix_0,m)\sim_n(x_1,m)+(x_0,m)\,,\quad 0<i<n\,,\ m\in M_{t_0(x_1*_ix_0)=t_0x_1=t_0x_0}\,,\leqno(*_i)
$$
quand ces composés $x_1*_ix_0$ ont un sens (où dans la relation $(*_0)$, $t_1(x_1)$ désigne le $1$\nbd-but de $x_1$ de sorte que $t_1(x_1)^*(m)$ appartient à $M_{t_0x_0})$. La différentielle 
$$
d_n:\C_n(X,M)\to\C_{n-1}(X,M)\,,\qquad n>0\,,
$$
est définie par la formule
$$
d_n(x,m)=\left\{\begin{aligned}
&(t_0x,m)-(s_0x,x^*m)\,,\quad\kern 11pt n=1\,,\cr
&(t_{n-1}x,m)-(s_{n-1}x,m)\,,\quad n>1\,.
\end{aligned}\right.
$$
On observe que si $M$ est le foncteur constant de valeur $\mathbb Z$, alors le complexe $\C(X,M)$ n'est autre que l'image de la $\infty$\nbd-catégorie $X$ par le foncteur d'abélianisation.
\smallbreak

Comme pour l'homologie polygraphique d'une $\infty$\nbd-catégorie $X$ à coefficients constants $\mathbb Z$, l'homologie polygraphique de $X$ à coefficients dans un système local $M$ est définie en choisissant une résolution polygraphique $p:P\to X$, autrement dit une fibration triviale de source cofibrante pour la structure folk sur $\ooCat$, et en posant, pour $n\geq0$, $\Hp{n}(X,M)=\H_n(\C(P,p^*(M)))$, où $p^*(M)$ désigne le système local sur $P$ obtenu de $M$ en précomposant avec $p$. De façon plus sophistiquée, le groupe abélien $\Hp{n}(X,M)$ est le $n$\nbd-ème groupe d'homologie de l'objet $\HP(X,M)$ de la catégorie dérivée des groupes abéliens correspondant au complexe $\C(P,p^*(M))$. Le résultat clef (théorème~\ref{th:Quilleng}) pour montrer qu'à isomorphisme canonique près $\HP(X,M)$ ne dépend pas de la résolution choisie est que le foncteur 
$$
F_{X,M}:\cm{\ooCat}{X}\to\Comp(\Ab)\,,\qquad (X',p:X'\to X)\mapsto\C(X',p^*(M))\,,
$$
est un foncteur de Quillen à gauche (admettant un adjoint à droite et respectant les cofibrations et les cofibrations triviales) pour la structure de catégorie de modèles sur $\cm{\ooCat}{X}$ induite de la structure folk sur $\ooCat$, et la structure injective sur $\Comp(\Ab)$. Ce théorème est non trivial et pour le démontrer on utilise une description précise des cofibrations triviales génératrices de la structure folk sur $\ooCat$.

En s'inspirant de la variante du théorème de Whitehead mentionnée précédemment, on introduit une classe d'équivalences faibles dans $\ooCat$, les équivalences polygraphiques. On dit qu'un $\infty$\nbd-foncteur $f:X'\to X$ est une \emph{équivalence polygraphique} si il induit une équivalence des groupoïdes fondamentaux et si pour tout système local $M$ sur $X$ et tout $n\geq0$, le morphisme de groupes abéliens $\Hp{n}(X',f^*(M))\to\Hp{n}(X,M)$, induit par $f$, est un isomorphisme. On vérifie que cette classe satisfait au 2 sur 3 (proposition~\ref{prop:2sur3}), et on démontre le théorème suivant (voir théorème~\ref{th:fond}):

\begin{thm"}
Toute équivalence folk est une équivalence polygraphique.
\end{thm"}
 
La théorie de l'homotopie des $\infty$\nbd-catégories strictes~\cite{DGCondE},~\cite{DGAI},~\cite{DGAII},~\cite{AraThB},~\cite{DGComp}, généralisant celle des catégories~\cite{QuK},~\cite{Th2},~\cite{PS1},~\cite{MalPRST},~\cite{Ci}, est basée sur le nerf de Street, qui prolonge le nerf usuel (1\nbd-)catégorique de Grothendieck~\cite{Nerf}. Dans~\cite{Or}, Ross Street a introduit un objet cosimplicial de $\ooCat$, l'objet cosimplicial des orientaux, qui définit par le procédé de Kan un couple de foncteurs adjoints entre la catégorie des ensembles simpliciaux et celle des $\infty$\nbd-catégories strictes
$$
c:\pref{\Catsmp}\longrightarrow\ooCat\,,\qquad N:\ooCat\longrightarrow\pref{\Catsmp}\,,
$$
formé du foncteur $c$ de réalisation $\infty$\nbd-catégorique et du foncteur nerf $N$, connu sous le nom de nerf de Street. Par définition, le type d'homotopie d'une $\infty$\nbd-catégorie stricte est le type d'homotopie de son nerf de Street. Plus précisément, si on définit les \emph{équivalences de Thomason} comme étant les $\infty$\nbd-foncteurs dont le nerf de Street est une équivalence faible simpliciale, Andrea Gagna a prouvé que le foncteur $N$ induit une équivalence de catégories entre la localisation de la catégorie $\ooCat$ par les équivalences de Thomason et la catégorie homotopique des ensembles simpliciaux, autrement dit, celle des types d'homotopie~\cite{Andrea}. On conjecture même qu'il existe une structure de catégorie de modèles sur $\ooCat$ avec comme équivalences faibles les équivalences de Thomason et comme cofibrations \emph{certaines} cofibrations folk~\cite{DG}.
\smallbreak

Le foncteur de réalisation $\infty$\nbd-catégorique $c$ n'est pas compatible aux équivalences de Thomason, au sens qu'il ne transforme pas les équivalences simpliciales en équivalences de Thomason, et en particulier ne définit pas un quasi-inverse homotopique du nerf de Street. En revanche, ce foncteur est compatible à l'homologie polygraphique à coefficients dans des \SLS, et aux équivalences polygraphiques. Si $X$ est un ensemble simplicial, il est facile de vérifier que son groupoïde fondamental est canoniquement isomorphe au groupoïde fondamental de la $\infty$\nbd-catégorie $c(X)$, et par suite, les systèmes locaux sur $X$ et sur $c(X)$ se correspondent bijectivement. On démontre le théorème suivant (théorème~\ref{thm:compc} et corollaire~\ref{cor:compc}):

\begin{thm"}
Pour tout ensemble simplicial $X$, tout \SL{} $M$ sur $X$, et tout $n\geq0$, on a un isomorphisme canonique $\Hp{n}(c(X),M)\simeq\H_n(X,M)$. En particulier, un morphisme d'ensembles simpliciaux est une équivalence d'homotopie faible si et seulement si son image par $c$ est une équivalence polygraphique.
\end{thm"}

Par définition, l'homologie singulière, ou plus simplement homologie, d'une $\infty$\nbd-catégorie est l'homologie de son type d'homotopie, c'est-à-dire de son nerf de Street. Plus précisément, si $X$ est une $\infty$\nbd-catégorie, le groupoïde fondamental de~$X$ est canoniquement isomorphe au groupoïde fondamental de l'ensemble simplicial~$N(X)$, les \SLS{} sur $X$ et sur $N(X)$ se correspondent donc bijectivement, et on définit l'homologie de $X$ à coefficients dans un système local $M$ sur $X$ par la formule $\H_n(X,M)=\H_n(N(X),M)$, $n\geq0$. Le morphisme d'adjonction $cN(X)\to X$ induit un morphisme de groupes abéliens $\Hp{n}(cN(X),M)\to\Hp{n}(X,M)$, qui en vertu du théorème~2, définit donc un morphisme de comparaison $\H_n(X,M)\to\Hp{n}(X,M)$. On démontre le théorème suivant (théorème~\ref{thm:Leonard}), qui généralise un théorème du premier auteur dans le cas d'un \SL{} constant de valeur $\mathbb Z$~\cite{Leonard},~\cite{LeonardTh}:

\begin{thm"}
Pour toute $(1\hbox{-})$catégorie $X$ et tout $n\geq0$, le morphisme de comparaison $\H_n(X,M)\to\Hp{n}(X,M)$ est un isomorphisme.
\end{thm"}

Pour résumer, on est en présence de trois classes d'équivalences faibles dans $\ooCat$, la classe $\Wfolk$ des équivalences faibles de la structure folk, la classe $\Wpol$ des équivalences polygraphiques et la classe $\Wthom$ des équivalences de Thomason. On a des inclusions
$$
\Wfolk\subset\Wpol\,,\qquad\Wfolk\subset\Wthom\,
$$
(la première en vertu du théorème~1, la seconde en vertu de la proposition~3.6.2 de~\cite{LeonardTh}). En revanche, il n'y a aucune inclusion entre les classes $\Wpol$ et $\Wthom$ (voir scholie~\ref{scholie}). Par ailleurs, si on note $\Wsmp$ la classe des équivalences faibles simpliciales, on a des égalités
$$
\Wsmp=c^{-1}(\Wpol)\,,\qquad\Wthom=N^{-1}(\Wsmp)
$$
(la première en vertu du théorème~2 et la seconde par définition). Enfin, il résulte du théorème~3 (et du théorème~1.6) que
$$
\Wpol\cap\Fl(\Cat)=\Wthom\cap\Fl(\Cat)\,,
$$
où $\Fl(\Cat)$ désigne la classe des flèches de $\Cat$. Les classes $\Wpol$ et $\Wthom$ partagent un grand nombre de propriétés formelles (voir remarque~\ref{remlocfond}). En particulier, la classe~$\Wpol$ satisfait à un théorème~A de Quillen relatif au dessus d'une ($1$\nbd-)catégorie (proposition~\ref{prop:thA}).
\smallbreak

On conjecture que dans le cadre des $\infty$\nbd-catégories faibles \og à la Grothendieck\fg{} les analogues des classes $\Wpol$ et $\Wthom$ coïncident et que pour les $\infty$\nbd-groupoïdes faibles les trois classes $\Wpol$, $\Wthom$ et $\Wfolk$ coïncident.
\smallbreak

Dans le corps de l'article, on développe la théorie de l'homologie et de l'homologie polygraphique dans un cadre un peu plus général, celui des \SLFS{} (où le rôle du groupoïde fondamental est remplacé par celui de la \emph{catégorie fondamentale}), généralisant ainsi ce qui est connu, dans le cadre des ($1$\nbd-)catégories, comme homologie des foncteurs. Une partie des résultats est valable dans ce contexte plus général, comme par exemple, le théorème~1 (au sens où l'homologie polygraphique à coefficients dans un \SLF{} est compatible aux équivalences folk) ainsi que le théorème~2, mais pas le théorème~3.

\subsection*{Plan de l'article} Dans la première section on rappelle la définition de l'homologie d'un ensemble simplicial à coefficients dans un \SL{}, en introduisant une variante, les \SLFS, et on esquisse la preuve d'un théorème de Whitehead caractérisant les équivalences faibles simpliciales. Le but de la deuxième section est de fixer la terminologie et les notations relatives aux $\infty$\nbd-catégories strictes, et faire quelques rappels sur la structure folk $\infty$\nbd-catégorique. Dans la troisième, on définit l'homologie polygraphique d'une $\infty$-catégorie stricte à coefficients dans un \SLF, on montre son invariance par équivalences faibles folk, et on étudie ses propriétés de fonctorialité. Dans la quatrième, on rappelle la définition du nerf de Street, et on prouve que l'homologie d'un ensemble simplicial à coefficients dans un \SLF{} coïncide avec l'homologie polygraphique de son image par l'adjoint à gauche du nerf de Street. On introduit les équivalences faibles polygraphiques et on montre qu'elles permettent de caractériser les équivalences faibles simpliciales. La cinquième section est consacrée à l'étude de plusieurs définitions équivalentes de l'homologie d'une catégorie à coefficients dans un système local faible, connue sous le nom d'homologie des foncteurs. Dans la dernière section, on montre que pour un \SL{}, cette homologie coïncide avec l'homologie polygraphique. On démontre également que les équivalences faibles polygraphiques $\infty$\nbd-catégoriques satisfont à un théorème~A de Quillen relatif au-dessus d'une ($1$\nbd-)catégorie. Le but de l'appendice~A est de prouver un lemme technique d'adjonction utile dans la section~3. Dans l'appendice~B, on interprète l'homologie polygraphique d'un système local libre en termes de la construction de Grothendieck $\infty$\nbd-catégorique. L'appendice~C est consacré à la définition de l'homologie polygraphique d'une $\infty$\nbd-catégorie faible.

\section{Rappels sur l'homologie d'un système local simplicial}

Dans cette section, on rappelle quelques résultats sur l'homologie d'un ensemble simplicial à coefficients dans un \SL, introduite par Steenrod~\cite{Steen2} (pour une exposition plus actuelle, voir par exemple dans~\cite{DK}). Aucune originalité n'est revendiquée.

\begin{paragr}
On note $\Catsmp$ la catégorie des simplexes standard dont les objets sont les ensembles ordonnés
$$\smp n=\{0<1<\cdots<n\}\,,\qquad n\geq0\,,$$
et les morphismes les applications croissantes entre iceux, $\pref\Catsmp$ la catégorie des ensembles simpliciaux, catégorie des préfaisceaux sur $\Catsmp$, et pour un ensemble simplicial $X$,\penalty -500{} $X_n$ l'ensemble de ses $n$\nbd-simplexes, $X_n=X(\smp n)$, et
$$d_i:X_n\to X_{n-1}\,,\quad s_i:X_n\to X_{n+1}\,,\qquad0\leq i\leq n\,,$$ 
les opérateurs de face et de dégénérescence.
Pour tout $n$\nbd-simplexe $x\in X_n$, et toute suite d'entiers $0\leq i_0\leq i_1\leq\cdots\leq i_p\leq n$, on désigne par $x_{i_0i_1\dots i_p}$ le $p$\nbd-simplexe de $X$ image inverse de $x$ par l'application $k\mapsto i_k$. En particulier,
$$
d_ix=x_{1\dots\widehat i\dots n}\quad\hbox{et}\quad s_ix=x_{1\dots i-1i\,i\,i+1\dots n}\,,\qquad0\leq i\leq n\,.
$$
\end{paragr}

\begin{paragr}
Soit $X$ un ensemble simplicial. On note $c^{}_1(X)$ (resp. $\Pi_1(X)$) sa \emph{catégorie fondamentale} (resp. son \emph{groupoïde fondamental}). On rappelle que $c^{}_1(X)$ est la catégorie définie par générateurs et relations par le graphe 
$$
\xymatrix{
X_1\ar@<.6ex>[r]^{d_0}\ar@<-.6ex>[r]_{d_1} 
&X_0
}
$$
($d_1$ l'application source et $d_0$ l'application but) et les relations
$$
d_1(x)=d_0(x)\circ d_2(x)\,,\quad x\in X_2\,,\qquad 1_x=s_0(x)\,,\quad x\in X_0\,,
$$
et $\Pi_1(X)$ le \emph{groupoïde enveloppant} de $c^{}_1(X)$, autrement dit, le groupoïde obtenu en inversant formellement toutes les flèches de la catégorie $c^{}_1(X)$. On a une équivalence canonique de groupoïdes $\Pi_1(X)\simeq\Pi_1(|X|)$, où $|X|$ désigne la \emph{réalisation topologique} de l'ensemble simplicial $X$ et $\Pi_1(|X|)$ son groupoïde fondamental. Tout morphisme d'ensembles simpliciaux $f:X'\to X$, induit un morphisme $c^{}_1(f):c^{}_1(X')\to c^{}_1(X)$ (resp.~$\Pi_1(f):\Pi_1(X')\to\Pi_1(X)$), définissant ainsi un foncteur de la catégorie des ensembles simpliciaux vers celle des petites catégories (resp.~des groupoïdes).
\smallbreak

Un \emph{\SLF} sur $X$ est un préfaisceau abélien sur $c^{}_1(X)$, autrement dit, un foncteur $M:c^{}_1(X)^\circ\to\Ab$, où $\Ab$ désigne la catégorie des groupes abéliens. Pour tout $x\in X_0$, on a donc un groupe abélien $M_x$, et pour tout $x\in X_1$, un morphisme de groupes abéliens $x^*:=x^*_M:=M_x:M_{x_1}\to M_{x_0}$ satisfaisant aux conditions 
$$
x^*_{02}=x^*_{01}\circ x^*_{12}\,,\quad x\in X_2\,, \qquad x^*_{00}=1_{M_x}\,,\quad x\in X_0\,.
$$
On dit que $M$ est un \emph{\SL} si pour tout $x\in X_1$, le morphisme $x^*$ est un isomorphisme, autrement dit, si le foncteur $M$ se factorise par $\Pi_1(X)$.
\end{paragr}

\begin{paragr}\label{def:homsimpl}
Soient $X$ un ensemble simplicial et $M$ un \SLF{} sur $X$. On définit un groupe abélien simplicial $\C(X,M)$ en posant
$$
\C_n(X,M)=\textstyle\bigoplus\limits_{x\in X_n}M_{x_n}\,,\quad n\geq0\,,
$$
et en définissant les opérateurs simpliciaux par
$$\begin{aligned}
&d_i(x,m)=\left\{\begin{aligned}
&(d_ix,m)\,,\kern 10pt\qquad0\leq i< n\,,\cr
&(d_nx,x_{n-1,n}^*(m))\,,\quad i=n\,,
\end{aligned}\right.\cr
\noalign{\vskip 3pt}
&s_i(x,m)=(s_ix,m)\,.
\end{aligned}$$
On en déduit un complexe de chaînes de groupes abéliens noté également $\C(X,M)$ dont la différentielle est donnée par
$$\begin{aligned}
&d(x,m)=\textstyle\sum\limits_{0\leq i<n}(-1)^i(d_ix,m)+(-1)^n(d_nx,x^*_{n-1,n}(m))\,,\ x\in X_n,\, m\in M_{x_n},\, n>0\,.
\end{aligned}$$
L'homologie de l'ensemble simplicial $X$ à valeurs dans le \SLF{} $M$ est définie par $\H_n(X,M)=\H_n(\C(X,M))$. Cette homologie est également celle du complexe normalisé correspondant, quotient de $\C(X,M)$ par le sous-complexe engendré pas les simplexes dégénérés de $X$.
On remarque que si le foncteur~$M$ est constant de valeur un groupe abélien fixe $M$, alors $\H_n(X,M)$ est l'homologie usuelle de l'ensemble simplicial $X$ à valeurs dans le groupe abélien $M$. 
\smallbreak

Soient $f:X'\to X$ un morphisme d'ensembles simpliciaux, et $M:c^{}_1(X)^\circ\to\Ab$ un \SLF{} sur $X$. Le composé 
$$
\xymatrix{
c^{}_1(X')^\circ\ar[r]^{c^{}_1(f)^\circ}
&c^{}_1(X)^\circ\ar[r]^-{M}
&\Ab
}
$$
définit un \SLF{} sur $X'$, noté $f^*(M)$, et le morphisme $f$ induit un morphisme de complexes $\C(f,M):\C(X',f^*(M))\to\C(X,M)$, d'où un morphisme de groupes abéliens $\H_n(f,M):\H_n(X',f^*(M))\to\H_n(X,M)$, pour $n\geq0$.
\end{paragr}

\begin{paragr}
Soit $(X,a)$ un ensemble simplicial pointé connexe. Le \emph{revêtement universel} de $(X,a)$ est l'ensemble simplicial pointé $(\rev{X},\rev{a})$ défini comme suit:
$$\begin{aligned}
&\rev{X}_n=\{(x,g)\,|\,x\in X_n,\,g\in\Pi_1(X;x_n,a)\}\cr
\noalign{\vskip 3pt}
&d_i(x,g)=\left\{\begin{aligned}
&(d_ix,g)\,,\kern 10pt\qquad0\leq i< n\,,\cr
&(d_nx,g\circ x_{n-1,n})\,,\quad i=n\,,
\end{aligned}\right.\cr
\noalign{\vskip 3pt}
&s_i(x,g)=(s_ix,g)\,,\cr
\noalign{\vskip 3pt}
&\rev{a}=(a,1_a)\,,
\end{aligned}$$
où $\Pi_1(X;x_n,a)=\Hom_{\Pi_1(X)}(x_n,a)$. On a un morphisme d'ensembles simpliciaux pointés
$$
\rev{X}\to X\,,\qquad(x,g)\mapsto x\,,
$$
dont la réalisation topologique $|\rev{X}|\to|X|$ s'identifie au revêtement universel de la réalisation topologique de $X$. Un morphisme d'ensembles simpliciaux connexes pointés $f:X'\to X$ induit un morphisme d'ensembles simpliciaux pointés
$$
\rev{f}:\rev{X}'\to\rev{X}\,,\qquad(x,g)\mapsto(f(x),\Pi_1(f)(g))\,,
$$
rendant commutatif le carré
$$
\xymatrix{
&\rev{X}'\ar[r]^{\rev{f}}\ar[d]
&\rev{X}\ar[d]
\\
&X'\ar[r]_f
&X
&\kern -20pt.\kern 20pt
}$$
\end{paragr}

\begin{paragr}
On rappelle qu'une \emph{équivalence faible simpliciale} est un morphisme d'ensembles simpliciaux $X'\to X$ induisant une équivalence d'homotopie $|X'|\to|X|$ entre les réalisations topologiques. 
\end{paragr}

On a la variante suivante du théorème de Whitehead (voir par exemple~\cite[Theorem~6.71]{DK}, ou pour la version cohomologique~\cite[Chapter~II, §3, Proposition~4]{Qu}), dont on esquisse une preuve pour la commodité du lecteur:

\begin{thm}\label{thtopol}
Soit $f:X'\to X$ un morphisme d'ensembles simpliciaux. Les conditions suivantes sont équivalentes:

\emph{(a)} $f$ est une équivalence faible simpliciale;

\emph{(b)} $\Pi_1(f)$ est une équivalence de groupoïdes, et pour tout \SL{} $M$ sur $X$ et tout $n\geq0$, le morphisme de groupes abéliens $\H_n(f,M):\H_n(X',f^*(M))\to\H_n(X,M)$, induit par $f$, est un isomorphisme;

\emph{(c)} $\Pi_1(f)$ est une équivalence de groupoïdes, et pour toute composante connexe $X'_0$ de $X'$ et tout $n\geq0$, le morphisme 
de groupes abéliens 
$\H_n(\rev{X}'_0,\mathbb{Z})\to\H_n(\rev{X}_0,\mathbb{Z})$,
où $X_0$ est la composante connexe de $X$ correspondant à $X'_0$ par l'équivalence $\Pi_1(f)$, et $\rev{X}'_0$ \emph{(resp.}~$\rev{X}_0$\emph{)} le revêtement universel de $X'_0$ \emph{(resp.} de $X_0$\emph{)} pointé par un de ses points \emph{(resp.} par l'image par $f$ de ce point\emph{),} est un isomorphisme.
\end{thm}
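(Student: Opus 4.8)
The plan is to reduce everything to the case where $X$ and $X'$ are connected and pointed, and then to deduce the equivalence (b)$\Leftrightarrow$(c) by a homological-algebra argument over the group ring of $\pi_1$, and the equivalence (a)$\Leftrightarrow$(c) from the classical Whitehead theorem for topological spaces. First I would note that each of (a), (b), (c) implies that $\Pi_1(f)$ is an equivalence of groupoids: for (b) and (c) it is part of the hypothesis, and for (a) it follows from the canonical equivalence $\Pi_1(X)\simeq\Pi_1(|X|)$ (likewise for $X'$) together with the fact that a homotopy equivalence induces an equivalence of fundamental groupoids. So I may assume throughout that $\Pi_1(f)$ is an equivalence; in particular $f$ is bijective on sets of connected components, hence decomposes as a disjoint union of morphisms $f_i\colon X'_i\to X_i$ between corresponding components. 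Since the formation of $\C(-,-)$, of topological realisations and of universal covers all commute with disjoint unions, and since a map of spaces which is bijective on $\pi_0$ is a weak equivalence if and only if it is one on each component, each of (a), (b), (c) holds for $f$ if and only if it holds for every $f_i$. So I may and do assume from now on that $X$ and $X'$ are connected; I fix a vertex $a'$ of $X'$, set $a=f(a')$, and put $G=\Hom_{\Pi_1(X)}(a,a)$, identified with $\Hom_{\Pi_1(X')}(a',a')$ via $\Pi_1(f)$.

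The key algebraic input is the following. Consider the local system $\Lambda$ on $X$ with $\Lambda_x=\mathbb Z[\Pi_1(X;x,a)]$ and transition maps induced by composition in $\Pi_1(X)$; it is indeed a local system since the edges of $X$ become invertible in $\Pi_1(X)$. Unwinding the definition of the universal cover $\rev X$ recalled above, one gets a canonical isomorphism of complexes $\C(\rev X,\mathbb Z)\cong\C(X,\Lambda)$: the twist by $x^*_{n-1,n}$ in the last face operator of $\C(X,\Lambda)$ is exactly the formula defining $d_n$ on $\rev X$. More generally, for any local system $M$ on $X$ there is a natural isomorphism $\C(X,M)\cong\C(\rev X,\mathbb Z)\otimes_{\mathbb Z[G]}M_a$, where $\C(\rev X,\mathbb Z)$ is regarded as a complex of $\mathbb Z[G]$-modules via the deck action; degreewise it is free over $\mathbb Z[G]$, with basis the set of $n$-simplices of $X$. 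Moreover $f^*\Lambda$ is canonically isomorphic to the corresponding local system $\Lambda'$ on $X'$ (via the bijections $\Pi_1(X';x,a')\xrightarrow{\ \sim\ }\Pi_1(X;f(x),a)$ induced by $\Pi_1(f)$), so that $\C(\rev X',\mathbb Z)\cong\C(X',f^*\Lambda)$; and under all these identifications $\C(f,M)$ becomes $\rev f_*\otimes_{\mathbb Z[G]}\mathrm{id}_{M_a}$ for the $\mathbb Z[G]$-linear map $\rev f_*\colon\C(\rev X',\mathbb Z)\to\C(\rev X,\mathbb Z)$.

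Granting this, (b)$\Leftrightarrow$(c) is pure homological algebra. That (b) implies (c) follows by taking $M=\Lambda$ and using the two isomorphisms $\C(\rev X,\mathbb Z)\cong\C(X,\Lambda)$ and $\C(\rev X',\mathbb Z)\cong\C(X',f^*\Lambda)$. Conversely, assume (c). Then $\rev f_*\colon\C(\rev X',\mathbb Z)\to\C(\rev X,\mathbb Z)$ is a quasi-isomorphism between non-negatively graded complexes of free, hence projective, $\mathbb Z[G]$-modules; such a quasi-isomorphism is a $\mathbb Z[G]$-linear chain homotopy equivalence (its mapping cone is a bounded-below acyclic complex of projectives, hence contractible), and chain homotopy equivalences are preserved by the additive functor $-\otimes_{\mathbb Z[G]}M_a$. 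Therefore $\C(f,M)$ is a quasi-isomorphism for every local system $M$ on $X$, which is (b).

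Finally, (a)$\Leftrightarrow$(c) is the classical Whitehead theorem transported along the realisation functor. Using that $\H_*(\rev X,\mathbb Z)$ computes the singular homology of $|\rev X|$, that $|\rev X|\to|X|$ is the universal cover of $|X|$ (and similarly for $X'$), that $|\rev f|$ realises $\rev f$, and that $\Pi_1(X)\simeq\Pi_1(|X|)$, condition (c) says precisely that $|f|$ induces an isomorphism on $\pi_1$ and on the integral homology of the universal covers. Since universal covers are simply connected and $\pi_n(|X|,a)\cong\pi_n(|\rev X|,\rev a)$ for $n\geq2$ (likewise for $X'$), and $|f|$ is already an isomorphism on $\pi_0$ and $\pi_1$, the homology Whitehead theorem for simply connected spaces shows that $|\rev f|$ is a weak equivalence, whence $|f|$ is an isomorphism on all homotopy groups, i.e.\ a weak homotopy equivalence of CW-complexes, hence a homotopy equivalence, so $f$ is a simplicial weak equivalence; the converse is immediate. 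The step with genuine content — and the one I expect to take the most care — is the identification $\C(X,M)\cong\C(\rev X,\mathbb Z)\otimes_{\mathbb Z[G]}M_a$ together with the bookkeeping of left/right $\mathbb Z[G]$-module structures and the naturality in $f$; once that is in place, both equivalences are short, everything else being standard covering-space theory and the classical Whitehead theorem (see~\cite{DK}).
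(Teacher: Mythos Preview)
Your proof is correct, and it shares with the paper's argument both the identification $\C(\rev X,\mathbb Z)\cong\C(X,\Lambda)$ for the local system $\Lambda_x=\mathbb Z[\Pi_1(X;x,a)]$ (this is exactly how the paper proves (b)$\Rightarrow$(c)) and the appeal to the Whitehead theorem for simply connected spaces together with Eilenberg's comparison $\H_n(|T|,\mathbb Z)\simeq\H_n(T,\mathbb Z)$ for (c)$\Rightarrow$(a).

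The genuine difference lies in how (a)$\Rightarrow$(b) is obtained. The paper proves the cycle (a)$\Rightarrow$(b)$\Rightarrow$(c)$\Rightarrow$(a) and establishes (a)$\Rightarrow$(b) directly by a model-categorical argument: the functor $(T,p)\mapsto\C(T,p^*(M))$ from $\cm{\pref{\Catsmp}}{X}$ to $\Comp(\Ab)$ is shown to be left Quillen (for the induced and injective structures), by checking that horn inclusions over $X$ go to quasi-isomorphisms, which reduces to constant coefficients since $\Pi_1(\smp n)$ is trivial. You instead bypass this entirely, proving (b)$\Leftrightarrow$(c) and (a)$\Leftrightarrow$(c): your (c)$\Rightarrow$(b) rests on the stronger identification $\C(X,M)\cong\C(\rev X,\mathbb Z)\otimes_{\mathbb Z[G]}M_a$ and the classical fact that a quasi-isomorphism between bounded-below complexes of free $\mathbb Z[G]$-modules is a chain homotopy equivalence. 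Your route is more self-contained and avoids model categories; the paper's route, on the other hand, is a deliberate rehearsal of the key technique used later (th\'eor\`eme~\ref{th:Quilleng}), where the analogous left-Quillen property for $\cm{\ooCat}{X}$ is the crux of defining polygraphic homology with coefficients.
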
 

\begin{proof}[Esquisse de preuve]
(\emph{a}) $\Rightarrow$ (\emph{b}). Supposons que $f$ est une équivalence faible simpliciale. Par définition, $\Pi_1(f)$ est alors une équivalence de groupoïdes. Il suffit donc de montrer que pour tout \SL{} $M:\Pi_1(X)^\circ\to \Ab$, le morphisme de complexes $\C(f,M):\C(X',f^*(M))\to\C(X,M)$ est un quasi-isomorphisme. Fixons un tel \SL{} $M$, et considérons la catégorie \smash{$\cm{\pref{\Catsmp}}{X}$} des ensembles simpliciaux au-dessus de $X$ munie de la structure de catégorie de modèles induite de celle de \smash{$\pref{\Catsmp}$}, la catégorie $\Comp(\Ab)$ des complexes de groupes abéliens munie de la structure de catégorie de modèles injective (dont les cofibrations sont les monomorphismes et les équivalences faibles les quasi-isomorphismes), et $F:\cm{\pref{\Catsmp}}{X}\to\Comp(\Ab)$ le foncteur associant à un objet $(T,p:T\to X)$ le complexe $\C(T,p^*(M))$. Il suffit de montrer que ce foncteur respecte les équivalences faibles, et pour cela, comme tous les objets de \smash{$\cm{\pref{\Catsmp}}{X}$} sont cofibrants, qu'il est un foncteur de Quillen à gauche. On vérifie facilement que le foncteur $F$ respecte les monomorphismes, et qu'il commute aux limites inductives et est donc un adjoint à gauche, puisqu'il est un foncteur entre catégories localement présentables. Pour conclure, il reste donc à montrer que ce foncteur envoie les cofibrations triviales génératrices de \smash{$\cm{\pref{\Catsmp}}{X}$} sur des quasi-isomorphismes. Or, les cofibrations triviales génératrices de \smash{$\cm{\pref{\Catsmp}}{X}$} sont les inclusions de cornets au-dessus de $X$, 
$$
\Lambda^k_n\overset{i}{\hookrightarrow}\smp{n}\overset{p}{\longrightarrow}X\,,
$$
et comme le groupoïde $\Pi_1({\smp{n}})$ est trivial le \SL{} $p^*(M)$ sur $\smp n$ est isomorphe au système constant de valeur $M_{p(n)}$. On en déduit que l'image du morphisme $i$ au-dessus de $X$ par le foncteur $F$ s'identifie au morphisme $\C(\Lambda^k_n,M_{p(n)})\to\C(\smp{n},M_{p(n)})$ des complexes calculant l'homologie à valeurs dans le groupe constant $M_{p(n)}$.  Ce morphisme de complexes est un quasi-isomorphisme car, d'une part, l'image d'une inclusion de cornets dans la catégorie des ensembles simpliciaux à homotopie près est un isomorphisme, et d'autre part, le foncteur homologie à valeurs dans un groupe abélien constant transforme les homotopies simpliciales en homotopies de complexes. 
\smallbreak

(\emph{b}) $\Rightarrow$ (\emph{c}). Pour montrer cette implication on se ramène aussitôt au cas où $X$ et $X'$ sont $0$\nbd-connexes. Choisissons un point $a'$ de $X'$, soit $a$ son image par $f$, et considérons le revêtement universel $\rev X'$ (resp. $\rev X$) de l'ensemble simplicial pointé $(X',a')$ (resp.~$(X,a)$) et le morphisme $\rev f:\rev X'\to\rev X$ induit par $f$. On définit un \SL{} $M$ sur $X$ comme suit. Pour tout $0$\nbd-simplexe $x$ de $X$, $M_x$ est le $\mathbb Z$\nbd-module libre engendré par l'ensemble $\Pi_1(X;x,a)$. Pour tout $1$\nbd-simplexe $x$ de $X$, on a un morphisme de $\mathbb Z$\nbd-modules $x^*:M_{x_1}\to M_{x_0}$, induit par l'application
$$
\Pi_1(X;x_1,a)\to\Pi_1(X;x_0,a)\,,\qquad g\mapsto g\circ x\,.
$$
La définition du groupoïde $\Pi_1(X)$ par générateurs et relations montre que pour tout $2$\nbd-simplexe $x$ de $X$, on a $x^*_{02}=x^*_{01}\circ x^*_{12}$, et qu'on définit ainsi un foncteur $M:\Pi_1(X)^\circ\to\Ab$. Une vérification simple mais fastidieuse montre alors que le morphisme de complexes $\C(f,M):\C(X',f^*(M))\to\C(X,M)$ s'identifie au morphisme $\C(\rev f,\mathbb Z):\C(\rev X',\mathbb Z)\to\C(\rev X,\mathbb Z)$, ce qui prouve l'implication.
\smallbreak

(\emph{c}) $\Rightarrow$ (\emph{a}). On peut à nouveau supposer $X'$ et $X$ $0$\nbd-connexes, choisir un point $a'$ de $X'$ et considérer le morphisme $\rev f:\rev X'\to\rev X$ induit par $f$, entre les revêtements universels correspondants. La réalisation topologique $|\rev X'|$ (resp.~$|\rev X|$) de $\rev X'$  (resp.~$\rev X$) s'identifie au revêtement universel de $X'$ (resp.~$X$), et $|\rev f|$ au morphisme entre les revêtements universels induit par $|f|$. Or, en vertu d'un théorème \hbox{d'Eilenberg} (voir par exemple~\cite[Appendix Two, §1]{GZ}), pour tout ensemble simplicial $T$ et tout $n\geq0$, on a un isomorphisme $\H_n(|T|,\mathbb Z)\simeq\H_n(T,\mathbb Z)$, fonctoriel en $T$. L'hypothèse de la condition (\emph{c}) implique donc que pour tout $n\geq0$, le morphisme $\H_n(|\rev f|,\mathbb Z):\H_n(|\rev X'|,\mathbb Z)\to\H_n(|\rev X|,\mathbb Z)$ est un isomorphisme. Comme les espaces $|\rev X'|$ et $|\rev X|$ sont simplement connexes, le théorème de Whitehead (voir par exemple~\cite[Corollary~4.33]{Hat}) implique que $|\rev f|$ est une équivalence d'homotopie. Comme par hypothèse $\Pi_1(f)$, donc aussi $\Pi_1(|f|)$, est une équivalence de groupoïdes, la longue suite exacte des groupes d'homotopie implique que $|f|$, donc aussi $f$, est une équivalence faible.
\end{proof}

\begin{rem}
Le fait que dans la condition (\emph{b}) du théorème ci-dessus on a supposé que \emph{$M$ est un \SL}, et pas seulement un \SLF, est essentiel. En effet, il faut se garder de croire que l'homologie à coefficients dans un \SLF{} soit invariante par les équivalences faibles simpliciales. Par exemple, si $f:X'\to X$ est le nerf de l'inclusion de la catégorie  ponctuelle $\{1\}$ dans la catégorie $\{0\to1\}$, et $M$ le \SLF{} $M_1\to M_0$, avec $M_0=\mathbb Z$ et $M_1=0$, alors on vérifie facilement que $\H_0(X',f^*(M))=0$, tandis que $\H_0(X,M)=\mathbb Z$.
\end{rem}

\section{Rappels sur les \pdfoo-catégories strictes et les polygraphes}

Toutes les $\infty$\nbd-catégories considérées dans cet article seront des $\infty$\nbd-catégories \emph{strictes} et les $\infty$\nbd-foncteurs des $\infty$\nbd-foncteurs \emph{stricts}, et on dira simplement $\infty$\nbd-catégorie pour $\infty$-catégorie stricte et $\infty$\nbd-foncteur pour $\infty$\nbd-foncteur strict. Aucune hypothèse d'inversibilité (stricte ou faible) des $i$\nbd-flèches n'est faite en général, autrement dit, il s'agit de $(\infty,\infty)$\nbd-catégories. 

\begin{paragr}\label{ooCat}
On note $\ooCat$ la catégorie des (petites) $\infty$-catégories et $\infty$\nbd-foncteurs. Pour~$X$ une $\infty$-catégorie et $n\geq0$, on note $X_n$ l'ensemble des ses $n$\nbd-cellules, qu'on appellera également \emph{objets}, si $n=0$, et \emph{$n$\nbd-flèches}, si $n>0$. Pour $x\in X_n$ et $0\leq i\leq n$, $s_ix$ et $t_ix$ désignent respectivement la $i$\nbd-cellule $i$\nbd-source et $i$\nbd-but (itérés) de $x$ (en particulier $s_nx=x=t_nx$), et $1_x$ la $(n+1)$\nbd-flèche unité de $x$. On dira parfois qu'une $n$\nbd-cellule est \emph{triviale} si elle est l'unité d'une $(n-1)$\nbd-cellule. Pour $0\leq i<\min\{n_0,n_1\}$, $x_0\in X_{n_0}$, $x_1\in X_{n_1}$, on dira que $x_0$ et $x_1$ sont \emph{$i$\nbd-composables} si $t_ix_0=s_ix_1$, et on notera $x_1*_ix_0$ leur $i$\nbd-composé. Par convention, si $i_1<i_2$, la $i_1$\nbd-composition sera prioritaire par rapport à la $i_2$\nbd-composition. Par exemple, on écrira $x_2*_1x_1*_0x_0$ pour $x_2*_1(x_1*_0x_0)$, dès que cette dernière expression a un sens. Pour $n\geq0$, on dit qu'une $n$\nbd-cellule est \emph{indécomposable} si, ou bien $n=0$, c'est-à-dire $x$ est un objet, ou bien $n>0$, $x$ n'est pas une unité, et pour tout $0\leq i<n$ et toute décomposition $x=x_1*_ix_0$ de $x$, au moins une des deux cellules $x_0,\,x_1$ est une unité (itérée) d'une $i$\nbd-cellule. On note $X^\circ$ la $\infty$\nbd-catégorie \emph{dual total} de $X$, obtenue de $X$ en inversant le sens de ses $i$\nbd-flèches pour tout $i>0$.
\smallbreak

Une $n$\nbd-catégorie (stricte) sera toujours considérée comme une $\infty$\nbd-catégorie dont les $i$\nbd-cellules sont triviales pour $i>n$. En particulier, la catégorie $\Cat$ des petites catégories, sera considérée comme une sous-catégorie pleine de $\ooCat$. L'inclusion de la sous-catégorie pleine de $\ooCat$ formée des $n$\nbd-catégories admet un adjoint à gauche et un adjoint à droite appelés respectivement foncteur de \emph{troncation intelligente} et foncteur de \emph{troncation bête}. Si $X$ est une $\infty$\nbd-catégorie, son $n$\nbd-tronqué bête est la sous-$\infty$-catégorie de $X$ ayant mêmes $i$\nbd-cellules que $X$ pour $0\leq i\leq n$, et seulement des $i$\nbd-cellules triviales pour $i>n$. Son tronqué intelligent est le quotient de $X$ ayant mêmes $i$\nbd-cellules que $X$ pour $0\leq i<n$, dont les $n$\nbd-cellules sont obtenues de celles de~$X$ en identifiant deux $n$\nbd-cellules si elles sont reliées par un zigzag de $(n+1)$\nbd-cellules de~$X$, et n'ayant que des $i$\nbd-cellules triviales pour $i>n$. La \emph{catégorie fondamentale} de $X$ est son $1$\nbd-tronqué intelligent, et son \emph{groupoïde fondamental} le groupoïde enveloppant de cette dernière. Par convention, le (-1)\nbd-tronqué bête d'une $\infty$\nbd-catégorie est la $\infty$\nbd-catégorie vide $\varnothing$.
\smallbreak

Pour $n\geq0$, on note $D_n$ le \emph{$n$\nbd-disque}, unique à isomorphisme unique près, $n$\nbd-catégorie ayant exactement une $n$\nbd-cellule non triviale
$\zeta_n\kern1pt$, appelée sa \emph{cellule fondamentale}, et pour $0\leq i<n$, exactement deux $i$\nbd-cellules non triviales, égales à $s_i\zeta_n$ et $t_i\zeta_n\kern1pt$.
\[
    \shorthandoff{;:}
    D_{0} = \bullet
    \,,\quad
    D_{1} = \xymatrix{\bullet \ar[r] & \bullet},
    \quad
    D_{2} = \xymatrix@C=3pc@R=3pc{\bullet \ar@/^2.5ex/[r]_{}="0"
      \ar@/_2.5ex/[r]_{}="1"
      \ar@2"0";"1"
    &  \bullet},\quad
    D_{3} = \xymatrix@C=3pc@R=3pc{\bullet \ar@/^3ex/[r]_(.47){}="0"^(.53){}="10"
      \ar@/_3ex/[r]_(.47){}="1"^(.53){}="11"
      \ar@<2ex>@2"0";"1"_{}="2" \ar@<-2ex>@2"10";"11"^{}="3"
      \ar@3"3";"2"_{}
    &  \bullet}\,,\quad\dots
  \]
Le couple $(D_n,\zeta_n)$ coreprésente le foncteur
$$
\ooCat\to\Ens\,,\qquad X\mapsto X_n\,,
$$
associant à une $\infty$\nbd-catégorie l'ensemble de ses $n$\nbd-cellules, autrement dit, on a une bijection naturelle en $X$
$$
\Hom_{\ooCat}(D_n,X)\overset{\sim}{\to}X_n\,,\qquad f\mapsto f(\zeta_n)\,.
$$
En particulier, la $(n-1)$\nbd-cellule $s_{n-1}\zeta_n$ (resp.~$t_{n-1}\zeta_n$) définit le morphisme \emph{cosource} (resp.~\emph{cobut}) $D_{n-1}\to D_n$.
\smallbreak

On note $S^{n-1}$ la \emph{$(n-1)$\nbd-sphère}, $(n-1)$\nbd-tronqué bête de~$D_n$, 
\[
    \shorthandoff{;:}
    S^{-1} = \varnothing
    \,,\quad
    S^{0} = \xymatrix{\bullet & \bullet},
    \quad
    S^{1} = \xymatrix@C=3pc@R=3pc{\bullet \ar@/^2.5ex/[r]_{}="0"
      \ar@/_2.5ex/[r]_{}="1"
          &  \bullet},\quad
    S^{2} = \xymatrix@C=3pc@R=3pc{\bullet \ar@/^3ex/[r]_(.47){}="0"^(.53){}="10"
      \ar@/_3ex/[r]_(.47){}="1"^(.53){}="11"
      \ar@<2ex>@2"0";"1"_{}="2" \ar@<-2ex>@2"10";"11"^{}="3"
          &  \bullet}\,,\quad\dots
  \]
et $i_n:S^{n-1}\hookrightarrow D_n$ l'inclusion. 
\end{paragr}

\begin{paragr}
La notion de $\infty$\nbd-catégorie libre au sens des polygraphes, ou plus simplement polygraphe, introduite indépendamment par Ross Street~\cite{StrPol},~\cite{Or} (sous le nom de \og computad\fg{}) et Albert Burroni~\cite{Burr1},~\cite{Burr2}, est l'analogue exact en théorie des $\infty$\nbd-catégories de la notion de CW\nbd-complexe en topologie. 
\smallbreak

Un \emph{polygraphe} est une $\infty$\nbd-catégorie $X$, admettant une filtration notée
$$
\varnothing=X_{\leq -1}\hookrightarrow X_{\leq 0}\hookrightarrow X_{\leq 1}\hookrightarrow\cdots \hookrightarrow X_{\leq n-1}\hookrightarrow X_{\leq n}\hookrightarrow\cdots\hookrightarrow \varinjlim X_{\leq n}=X
$$
telle que pour tout $n\geq0$, il existe un carré cocartésien de $\ooCat$ de la forme
$$
\xymatrixcolsep{3pc}
\xymatrix{
&&\protect\smash{\mathop\amalg\limits_{\,I_n}}S^{n-1}\ar[r]\ar[d]_(.58){\mathop\amalg\limits_{\,\,I_n}\kern -2pti_n}
&X_{\leq n-1}\vrule height 10pt width 0pt\ar[d]
&&
\\
&&\protect\smash{\mathop\amalg\limits_{\,I_n}}D_n\ar[r]
&X_{\leq n}\vrule height 7pt width 0pt
&\kern -30pt.\kern 30pt
&
}$$
\vskip 6pt
\noindent
La $\infty$\nbd-catégorie $X_{\leq n}$ est alors une $n$\nbd-catégorie et s'identifie au $n$\nbd-tronqué bête de~$X$. 
De plus, la flèche horizontale du bas est déterminée par les images de la cellule principale de $D_n$ par les composantes $D_n\to X_{\leq n}$ de cette flèche, établissant ainsi une bijection entre l'ensemble $I_n$ des indices et l'ensemble $B_n$ de ces images. On démontre que~$B_n$ est l'ensemble des $n$\nbd-cellules indécomposables de $X$~\cite[section~5]{Makkai}. Par conséquent, aussi bien la filtration que les carrés cocartésiens ci-dessus sont déterminés par la seule structure de $\infty$\nbd-catégorie de $X$. On dit alors que l'ensemble de cellules $B=\amalg_{n\geq}B_n$ est la \emph{base} du polygraphe $X$ ou que $B$ \emph{engendre librement} (\emph{au sens des polygraphes}) la $\infty$\nbd-catégorie~$X$.
\end{paragr}

\begin{paragr}\label{strfolk}
La catégorie $\ooCat$ des $\infty$\nbd-catégories admet une structure de catégorie de modèles à engendrement cofibrant, connue sous le nom de \og structure folk\fg~\cite{LMW}. Les cofibrations de cette structure sont engendrées par l'ensemble $I$ formé des inclusions canoniques $i_n:S^{n-1}\hookrightarrow D_n$, $n\geq 0$, des sphères dans les disques, et les cofibrations triviales par l'ensemble $J$ des inclusions $j_n:D_n\hookrightarrow J_{n+1}$, $n\geq 0$, définies comme suit. En vertu de l'argument du petit objet, le morphisme $S^n\to D_n$ envoyant les deux $n$\nbd-cellules non triviales de $S^n$ sur la cellule principale de $D_n$ et induisant un isomorphisme des $(n-1)$\nbd-tronqués bêtes, se décompose en une cofibration suivie d'une fibration triviale (flèche ayant la propriété de relèvement à droite relativement aux flèches $i_n$, $n\geq0$) 
$$
S^n\overset{k_n}{\longrightarrow} J_{n+1} \overset{r_n}{\longrightarrow} D_n\ .
$$
Le morphisme $j_n$ est le composé de l'inclusion $D_n\to S^n$, induite par la cosource $D_n\to D_{n+1}$, suivi de $k_n$.
Les équivalences faibles de cette structure, appelées \emph{équivalences folk}, sont une généralisation $\infty$\nbd-catégorique naturelle des équivalences de catégories (et de $2$\nbd-catégories). Si $f$ est une équivalence folk, pour tout $n\geq0$, le $n$\nbd-tronqué intelligent de $f$ l'est aussi, mais la réciproque est fausse. Les objets cofibrants de cette structure sont les polygraphes~\cite{MetCof}, et tout objet est fibrant.
\end{paragr}

\section{Homologie polygraphique d'un système local \pdfoo-catégorique}

\begin{paragr}
Soit $X$ une $\infty$\nbd-catégorie. Un \emph{\SLF} sur $X$ est un $\infty$\nbd-foncteur $M:X^\circ\to\Ab$, où $\Ab$ désigne la catégorie des groupes abéliens vue comme une $\infty$\nbd-catégorie dont les $n$\nbd-cellules sont des identités pour $n>1$. Par adjonction, un tel $\infty$\nbd-foncteur correspond à un foncteur contravariant $c_1(X)^\circ\to\Ab$ de la catégorie fondamentale ($1$\nbd-tronqué intelligent) de $X$ vers $\Ab$. Il revient donc au même de se donner pour tout objet $x$ de $X$ un groupe abélien $M_x$ et pour toute $1$\nbd-flèche $x:x_0\to x_1$ de $X$ un morphisme de groupes abéliens $x^*:=x^*_M:=M_x:M_{x_1}\to M_{x_0}$ satisfaisant aux conditions suivantes. Pour toute $2$\nbd-flèche $x:x_0\Rightarrow x_1$ de $X$, on a $x^*_0=x^*_1$, pour tout couple de $1$\nbd-flèches composables $x_0,x_1$ de $X$, on a $(x_1*_0x_0)^*=x_0^*x_1^*$, et pour tout objet $x$ de $X$, on a $1^*_x=1_{M_x}$. On dit que le \SLF{} $M$ est un \emph{\SL} si pour toute $1$\nbd-flèche de $X$, le morphisme de groupes abéliens $x^*$ est un isomorphisme, autrement dit, si le $\infty$\nbd-foncteur $M$ se factorise par le groupoïde fondamental $\Pi_1(X)$ de $X$, groupoïde enveloppant de sa catégorie fondamentale $c_1(X)$.
\end{paragr}

\begin{paragr}\label{relfond}
Soient $X$ une $\infty$\nbd-catégorie et $M$ un \SLF{} sur $X$. On définit un complexe de chaînes de groupes abéliens $\C(X,M)$ comme suit: 
$$
\C_n(X,M)=\textstyle\bigoplus\limits_{x\in X_n}M_{t_0x}\Bigm/\sim_n\,,\quad n\geq0\,,
$$
où $\sim_n$ désigne la plus petite relation d'équivalence compatible à l'addition engendrée par les relations:
$$
(x_1*_0x_0,m)\sim_n(x_1,m)+(x_0,t_1(x_1)^*(m))\,,\quad m\in M_{t_0(x_1*_0x_0)=t_0x_1}\,,\leqno(*_0)
$$
$$
(x_1*_ix_0,m)\sim_n(x_1,m)+(x_0,m)\,,\quad 0<i<n\,,\ m\in M_{t_0(x_1*_ix_0)=t_0x_1=t_0x_0}\,,\leqno(*_i)
$$
quand ces composés $x_1*_ix_0$ ont un sens (en particulier $\sim_0$ est l'égalité et $\sim_1$ est engendrée par la seule relation $(*_0)$). La différentielle 
$$
d_n:\C_n(X,M)\to\C_{n-1}(X,M)\,,\qquad n>0\,,
$$
est définie par la formule
$$
d_n(x,m)=\left\{\begin{aligned}
&(t_0x,m)-(s_0x,x^*m)\,,\quad\kern 11pt n=1\,,\cr
&(t_{n-1}x,m)-(s_{n-1}x,m)\,,\quad n>1\,.
\end{aligned}\right.
$$
\end{paragr}

\begin{prop}
Les formules ci-dessus définissent bien un complexe de chaînes.
\end{prop}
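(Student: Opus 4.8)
Le plan consiste \`a \'etablir les deux propri\'et\'es caract\'erisant un complexe de cha\^{\i}nes: que chaque $d_n$ est compatible avec les relations d'\'equivalence, et d\'efinit donc par passage au quotient un morphisme $\C_n(X,M)\to\C_{n-1}(X,M)$, et que $d_{n-1}\circ d_n=0$ pour tout $n\geq 2$. Ces deux points sont \'el\'ementaires; les seuls ingr\'edients sont les identit\'es globulaires ($t_it_j=t_i$, $t_is_j=t_i$, etc.\ pour $i<j$), la compatibilit\'e des compositions $*_j$ aux op\'erateurs de bord $s_k,t_k$ pour $j<k$ dans une $\infty$\nbd-cat\'egorie stricte (par exemple $t_k(x_1*_jx_0)=t_kx_1*_jt_kx_0$), la fonctorialit\'e de $M$ sur les $1$\nbd-fl\`eches ($(x_1*_0x_0)^*=x_0^*x_1^*$ et $1_x^*=1_{M_x}$), et l'identit\'e $x_0^*=x_1^*$ attach\'ee \`a toute $2$\nbd-fl\`eche $x:x_0\Rightarrow x_1$. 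L'essentiel de l'argument est la bonne d\'efinition de la diff\'erentielle; le point d\'elicat est qu'en petits degr\'es ($n=1,2$) le comportement de $M$ en dimensions $1$ et $2$ intervient dans le calcul, alors que pour $n\geq 3$ la diff\'erentielle est \og sans torsion\fg{} et la v\'erification est purement globulaire.

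On commence par la bonne d\'efinition de $d_n$. Comme, pour tout $x\in X_n$, l'application $m\mapsto d_n(x,m)$ est additive, la formule d\'efinit un morphisme de groupes de $\bigoplus_{x\in X_n}M_{t_0x}$ vers $\C_{n-1}(X,M)$, et il reste \`a v\'erifier qu'il annule chaque relation g\'en\'eratrice de $\sim_n$. Pour une relation $(*_i)$ avec $0<i<n$ et des $n$\nbd-fl\`eches $i$\nbd-composables $x_0,x_1$: si $i=n-1$, alors $t_{n-1}(x_1*_{n-1}x_0)=t_{n-1}x_1$, $s_{n-1}(x_1*_{n-1}x_0)=s_{n-1}x_0$, et comme $s_{n-1}x_1=t_{n-1}x_0$ les termes $-(s_{n-1}x_1,m)$ et $(t_{n-1}x_0,m)$ de $d_n(x_1,m)+d_n(x_0,m)$ se compensent, laissant exactement $d_n(x_1*_{n-1}x_0,m)$; si $i<n-1$ (donc $n\geq 3$), alors $s_{n-1}$ et $t_{n-1}$ se distribuent sur $*_i$ et on conclut en appliquant la relation $(*_i)$ de $\sim_{n-1}$ aux deux compos\'es de $(n-1)$\nbd-fl\`eches obtenus. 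Pour une relation $(*_0)$ avec des $n$\nbd-fl\`eches $0$\nbd-composables $x_0,x_1$: lorsque $n\geq 3$, $s_{n-1}$ et $t_{n-1}$ se distribuent sur $*_0$, on applique $(*_0)$ dans $\C_{n-1}(X,M)$ \`a $t_{n-1}x_1*_0t_{n-1}x_0$ et \`a $s_{n-1}x_1*_0s_{n-1}x_0$, et les identit\'es globulaires $t_1t_{n-1}=t_1=t_1s_{n-1}$ montrent que les deux coefficients de torsion qui apparaissent valent tous deux $t_1(x_1)^*(m)$; lorsque $n=2$, l'application de $(*_0)$ dans $\C_1(X,M)$ produit, du c\^ot\'e du but, le coefficient $(t_1x_1)^*(m)=t_1(x_1)^*(m)$ et, du c\^ot\'e de la source, le coefficient $(s_1x_1)^*(m)$, qui \'egale $t_1(x_1)^*(m)$ pr\'ecis\'ement parce que $x_1:s_1x_1\Rightarrow t_1x_1$ est une $2$\nbd-fl\`eche, d'o\`u $(s_1x_1)^*=(t_1x_1)^*$; lorsque $n=1$, la v\'erification se ram\`ene \`a $(x_1*_0x_0)^*=x_0^*x_1^*$ et \`a $s_0x_1=t_0x_0$.

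On v\'erifie ensuite que $d_{n-1}\circ d_n=0$ pour $n\geq 2$. Lorsque $n\geq 3$, on a $d_n(x,m)=(t_{n-1}x,m)-(s_{n-1}x,m)$ et $d_{n-1}$ est sans torsion, de sorte que $d_{n-1}d_n(x,m)$ est une somme sign\'ee de quatre termes dont les premi\`eres composantes sont $t_{n-2}t_{n-1}x$, $s_{n-2}t_{n-1}x$, $t_{n-2}s_{n-1}x$ et $s_{n-2}s_{n-1}x$; par les identit\'es globulaires celles-ci se r\'eduisent \`a $t_{n-2}x$ et $s_{n-2}x$, et les quatre termes s'annulent deux \`a deux. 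Lorsque $n=2$, un calcul direct donne $d_1d_2(x,m)=(s_0x,\,((s_1x)^*-(t_1x)^*)(m))$, qui est nul car $x:s_1x\Rightarrow t_1x$ est une $2$\nbd-fl\`eche, donc $(s_1x)^*=(t_1x)^*$. La seule r\'eelle difficult\'e est organisationnelle: il faut garder soigneusement trace de la distinction de cas $i=n-1$ contre $i<n-1$, et $n\in\{1,2\}$ contre $n\geq 3$, et s'assurer que chaque emploi d'une relation de $\sim_{n-1}$ dans le but est l\'egitime, c'est-\`a-dire appliqu\'e \`a des compos\'es vivant effectivement en degr\'e $n-1$. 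Aucune \'etape n'est conceptuellement difficile.
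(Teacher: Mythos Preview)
Your proof is correct and follows essentially the same approach as the paper's: a case-by-case verification of compatibility of $d_n$ with each relation $(*_i)$, followed by the check that $d_{n-1}d_n=0$, with the same case distinctions (globular argument for $n\geq3$, and use of the identity $(s_1x)^*=(t_1x)^*$ coming from the $2$\nbd-fl\`eche $x$ when $n=2$). The organization differs only superficially---you treat the relations $(*_i)$ for $i>0$ first and then $(*_0)$, while the paper orders by the value of $n$---but the computations and the key ingredients are identical.
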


\begin{proof}
Vérifions d'abord la compatibilité de la différentielle $d_n$, $n>0$, aux relations $(*_i)$, $0\leq i<n$. On distingue plusieurs cas:

--- $n=1$, $i=0$, $x_0,\,x_1$ $1$\nbd-cellules $0$\nbd-composables. On a: 
$$
\begin{aligned}
d_1(x_1*_0x_0,m)&=(t_0(x_1*_0x_0),m)-(s_0(x_1*_0x_0), (x_1*_0x_0)^*(m))\cr
&=(t_0x_1,m)-(s_0x_0,x_0^*x_1^*(m))
\end{aligned}
$$
et
$$
\begin{aligned}
d_1(x_1,m)&+d_1(x_0,t_1(x_1)^*(m))=\cr
&=(t_0x_1,m)-(s_0x_1,x_1^*m)+(t_0x_0,x_1^*m)-(s_0x_0,x_0^*x_1^*(m))\cr
&=(t_0x_1,m)-(s_0x_0,x_0^*x_1^*(m))\,.
\end{aligned}
$$

--- $n>1$, $i=0$, $x_0,\,x_1$ $n$\nbd-cellules $0$\nbd-composables. On a:
$$
\begin{aligned}
d_n(x_1*_0x_0,m)&=(t_{n-1}(x_1*_0x_0),m)-(s_{n-1}(x_1*_0x_0), m)\cr
&=(t_{n-1}x_1*_0t_{n-1}x_0,m)-(s_{n-1}x_1*_0s_{n-1}x_0, m)\cr
&\sim_{n-1}(t_{n-1}x_1,m)+(t_{n-1}x_0,t_1t_{n-1}(x_1)^*(m))\cr
&\kern 18pt-(s_{n-1}x_1,m)-(s_{n-1}x_0,t_1s_{n-1}(x_1)^*(m))
\end{aligned}
$$
et
$$
\begin{aligned}
d_n(x_1,&m)+d_n(x_0,t_1(x_1)^*(m))=\cr
&=(t_{n-1}x_1,m)-(s_{n-1}x_1,m)+(t_{n-1}x_0,t_1(x_1)^*(m))-(s_{n-1}x_0,t_1(x_1)^*(m))\,.
\end{aligned}
$$
Si $n>2$, on conclut par les relations globulaires, et si $n=2$ en remarquant que $t_1s_1(x_1)^*=s_1(x_1)^*=t_1(x_1)^*$, puisque la $2$\nbd-flèche $x_1$ de $X$ s'envoie sur une identité dans $\Ab$ qui est une ($1$-)catégorie. 

--- $n>1$, $0<i<n-1$, $x_0,\,x_1$ $n$\nbd-cellules $i$\nbd-composables. On a:
$$
\begin{aligned}
d_n(x_1*_ix_0,m)&=(t_{n-1}(x_1*_ix_0),m)-(s_{n-1}(x_1*_ix_0), m)\cr
&=(t_{n-1}x_1*_it_{n-1}x_0,m)-(s_{n-1}x_1*_is_{n-1}x_0, m)\cr
&\sim_{n-1}(t_{n-1}x_1,m)+(t_{n-1}x_0,m)-(s_{n-1}x_1,m)-(s_{n-1}x_0,m)
\end{aligned}
$$
et
$$
\begin{aligned}
d_n(x_1,&m)+d_n(x_0,m)=\cr
&=(t_{n-1}x_1,m)-(s_{n-1}x_1,m)+(t_{n-1}x_0,m)-(s_{n-1}x_0,m)\,.
\end{aligned}
$$

--- $n>1$, $i=n-1$, $x_0,\,x_1$ $n$\nbd-cellules $(n-1)$\nbd-composables. On a:
$$
\begin{aligned}
d_n(x_1*_{n-1}x_0,m)&=(t_{n-1}(x_1*_{n-1}x_0),m)-(s_{n-1}(x_1*_{n-1}x_0), m)\cr
&=(t_{n-1}x_1,m)-(s_{n-1}x_0, m)
\end{aligned}
$$
et
$$
\begin{aligned}
d_n(x_1,&m)+d_n(x_0,m)=\cr
&=(t_{n-1}x_1,m)-(s_{n-1}x_1,m)+(t_{n-1}x_0,m)-(s_{n-1}x_0,m)\cr
&=(t_{n-1}x_1,m)-(s_{n-1}x_0, m)\,.
\end{aligned}
$$
Il reste à prouver que $d_{n-1}d_n=0$, pour $n\geq2$;

--- $n>2$. L'égalité est conséquence immédiate des relations globulaires.

--- $n=2$, $x$ $2$\nbd-cellule de $X$, $m\in M_{t_0x}$. On a:
$$
\begin{aligned}
d_1d_2(x,m)&=d_1((t_1x,m)-(s_1x,m))\cr
&=(t_0t_1x,m)-(s_0t_1x,t_1(x)^*(m))\cr
&-(t_0s_1x,m)+(s_0s_1x,s_1(x)^*(m))=0\,,
\end{aligned}
$$
grâce aux relations globulaires et au fait que $t_1(x)^*(m)=s_1(x)^*(m)$, puisqu'on a une $2$\nbd-flèche $x:s_1(x)\Rightarrow t_1(x)$.
\end{proof}

\begin{rem}\label{remunit}
Pour $n>0$, la relation $(*_{n-1})$ implique aussitôt que si $x$ est une $(n-1)$\nbd-cellule de $X$ et $m\in M_{t_0x}$, on a $(1_x,m)\sim_n0$. On en déduit que pour $0\leq i<n'<n$, la relation $(*_i)$ implique que si $x_0,x_1$ sont deux cellules $i$\nbd-composables de $X$ et $m\in M_{t_0(x_1*_i x_0)}$, et si $x_1$ est de dimension $n$ et $x_0$ de dimension $n'$, on a 
$$(x_1*_ix_0,m)\sim_n(x_1,m)\,,$$
et inversement, si $x_0$ est de dimension $n$ et $x_1$ de dimension $n'$, on a
$$(x_1*_ix_0,m)\sim_n\left\{\begin{aligned}
&(x_0,t_1(x_1)^*(m))\,,\quad i=0\,,\cr
&(x_0,m)\,,\quad i>0\,.
\end{aligned}\right.$$
\end{rem}

\begin{paragr}\label{FonctCf}
Soient $f:X'\to X$ un $\infty$\nbd-foncteur strict et $M:X^\circ\to\Ab$ un \SLF{} sur $X$. Le composé
$$
\xymatrix{
X'{}^\circ\ar[r]^{f^\circ}
&X^\circ\ar[r]^M
&\Ab
}$$
définit un \SLF{} sur $X'$ noté $f^*(M)$. Pour toute $1$\nbd-flèche $x'$ de $X'$ et tout $m\in f^*(M)_{t_0x'}=M_{f(t_0x')}=M_{t_0(fx')}$, on a alors 
$$x'^*(m)=f^*(M)_{x'}(m)=M_{f(x')}(m)=f(x')^*(m)$$
(où \og l'étoile en haut\fg{} du membre de gauche est relative au \SLF{} $f^*(M)$ sur $X'$, tandis que celle du membre de droite est relative au \SLF{} $M$ sur $X$).
\smallbreak

Pour $n\geq0$, le $\infty$\nbd-foncteur $f$ induit, en posant $M'=f^*(M)$, un morphisme de groupes abéliens
$$
\textstyle\bigoplus\limits_{x'\in X'_n}\kern -5pt M'_{t_0x'}\to\textstyle\bigoplus\limits_{x\in X_n}\kern -5pt M_{t_0x}\,,\quad(x',m)\mapsto(f(x'),m)\,,\ x'\in X'_n\,,\ m\in M'_{t_0x'}=M_{t_0(fx')} 
$$ 
qui est compatible aux relations $(*_i)$, $0\leq i\leq n$, et aux différentielles, et définit donc un morphisme de complexes $\C(f,M):\C(X',f^*(M))\to\C(X,M)$. De plus, pour tout $\infty$\nbd-foncteur $f':X''\to X'$, on vérifie aussitôt qu'on a l'égalité
$$
\C(ff',M)=\C(f,M)\circ\C(f',f^*(M))\,,
$$
et que si $X'=X$ et $f=1_X$, alors $\C(1_X,M)=1_{\C(X,M)}$.
\end{paragr}

\begin{paragr}\label{FonctCphi}
Soient $X$ une $\infty$\nbd-catégorie, et $M$ et $M'$ deux \SLFS{} sur $X$. Un \emph{morphisme de \SLFS{} sur $X$} de $M'$ vers $M$ est une transformation naturelle du foncteur $M'$ vers le foncteur $M$. Concrètement, la donnée d'une telle transformation $\varphi$ revient à la donnée, pour tout objet $x$ de $X$, d'un morphisme de groupes abéliens $\varphi_x:M'_x\to M_x$ tel que pour toute $1$\nbd-flèche $x:x_0\to x_1$, le carré
$$
\xymatrixcolsep{3pc}
\xymatrix{
M'_{x_1}\ar[r]^{\varphi_{x_1}}\ar[d]_{x^*=M'_x}
&M_{x_1}\ar[d]^{x^*=M_x}
\\
M'_{x_0}\ar[r]_{\varphi_{x_0}}
&M_{x_0}
}
$$
soit commutatif. Le morphisme $\varphi$ induit, pour $n\geq0$, un morphisme de groupes abéliens
$$
\textstyle\bigoplus\limits_{x\in X_n}\kern -5pt M'_{t_0x}\to\textstyle\bigoplus\limits_{x\in X_n}\kern -5pt M_{t_0x}\,,\quad(x,m)\mapsto(x,\varphi_{t_0x}(m))\,,\ x\in X_n\,,\ m\in M'_{t_0x}\,, 
$$ 
dont la compatibilité aux relations $(*_i)$ est tautologique pour $0<i\leq n$, et résulte facilement de la commutativité du carré ci-dessus pour $i=0$. De même, sa compatibilité aux différentielles $d_i$ est évidente pour $i>1$, et résulte aussitôt de la commutativité dudit carré pour $i=1$.
Il définit donc un morphisme de complexes $\C(X,\varphi):\C(X,M')\to\C(X,M)$. De plus, si $\varphi':M''\to M'$ est un morphisme de \SLFS{} sur $X$, on a
$$
\C(X,\varphi\varphi')=\C(X,\varphi)\circ\C(X,\varphi')\,,
$$
et si $M'=M$ et $\varphi=1_M$, alors $\C(X,1_M)=1_{\C(X,M)}$.
\end{paragr}

\begin{paragr}\label{CompFonctCfphi}
Les fonctorialités des deux paragraphes précédents satisfont à la compatibilité suivante dont la vérification est immédiate. Soient $X$ une $\infty$\nbd-catégorie, $\varphi:M'\to M$ un morphisme de \SLFS{} sur $X$, et $f:X'\to X$ un $\infty$\nbd-foncteur. Alors la transformation naturelle $f^*(\varphi):=\varphi\ast f^\circ$ définit un morphisme de \SLFS{} $f^*(\varphi):f^*(M')\to f^*(M)$ sur $X'$, et le carré
$$\xymatrixcolsep{4.5pc}
\xymatrix{
&\C(X'\kern-2pt,f^*(M'))\ar[r]^-{\C(X'\kern-2pt,f^*(\varphi))\,}\ar[d]_{\C(f,M')}
&\C(X'\kern-2pt,f^*(M))\ar[d]^{\C(f,M)}
\\
&\C(X,M')\ar[r]_-{\C(X,\varphi)}
&\C(X,M)
&
}$$
est commutatif.
\end{paragr}

\begin{paragr}\label{DoubleFonct}
On peut combiner les fonctorialités des paragraphes~\ref{FonctCf} et~\ref{FonctCphi} comme suit. La \emph{catégorie des \SLFS{} dans $\ooCat$} est la catégorie ayant comme objets les couples $(X,M)$, où $X$ est une $\infty$\nbd-catégorie et $M$ un \SLF{} sur~$X$, un morphisme $(X',M')\to(X,M)$ étant un couple $(f,\varphi)$, où $f:X'\to X$ est un $\infty$\nbd-foncteur et $\varphi:M'\to f^*(M)$ une transformation naturelle, la composition de deux morphismes composables étant définie par la formule $(f,\varphi)\circ(f'\kern-2pt,\varphi')=(ff'\kern-2pt,f'^*(\varphi)\varphi')$.
\smallbreak

Si $(f,\varphi):(X'\kern-2pt, M')\to(X,M)$ est un morphisme de \SLFS{} dans $\ooCat$, le composé
$$
\xymatrixcolsep{3.5pc}
\xymatrix{
\C(X'\kern -2pt,M')\ar[r]^-{\C(X'\kern -3pt,\kern1pt\varphi)}
&\C(X'\kern -2pt,f^*(M))\ar[r]^-{\C(f,M)}
&\C(X,M)
}
$$
définit un morphisme de complexes de groupes abéliens 
$$
\C(f,\varphi):\C(X'\kern -2pt,M')\to\C(X,M)\,.
$$ 
Explicitement, ce morphisme est induit par les morphismes de groupes abéliens
$$
\textstyle\bigoplus\limits_{x'\in X'_n}\kern -6pt M'_{t_0x'}\kern-1pt\to\kern-2pt\textstyle\bigoplus\limits_{x\in X_n}\kern -6pt M_{t_0x}\,,\quad(x',m')\mapsto(f(x'),\varphi_{t_0x'}(m'))\,,\ x'\in X'_n\,,\ m'\in M'_{t_0x'}\,,\ n\geq0\,. 
$$ 
On laisse au lecteur le soin de vérifier que, grâce à la compatibilité du paragraphe précédent, on obtient ainsi un foncteur de la catégorie des \SLFS{} dans $\ooCat$ vers celle des complexes de groupes abéliens. Si $X'=X$ et $f=1_X$, on a $\C(1_X,\varphi)=\C(X,\varphi)$ (cf. paragraphe~\ref{FonctCphi}), et si $M'=f^*(M)$ et $\varphi=1_{f^*(M)}$, on a $\C(f,1_{f^*(M)})=\C(f,M)$ (cf. paragraphe~\ref{FonctCf}).
\end{paragr}

\begin{paragr}
On rappelle qu'étant donnés deux $\infty$\nbd-foncteurs $f_0,f_1:X'\to X$ une \emph{transformation oplax}, ou plus simplement \emph{transformation}, de $f_0$ vers $f_1$ est un $\infty$-foncteur $\alpha:D_1\otimes X'\to X$ tel que $f_k=\alpha\partial_{X'}^k$, $k=0,1$, où $\otimes$ désigne le produit de Gray $\infty$\nbd-catégorique~\cite{AglSt},~\cite{Crans},~\cite[appendice~A]{DGjoint}, $D_1$ la catégorie $\{0\to1\}$, et $\partial_{X'}^k$ le $\infty$\nbd-foncteur
$$\xymatrix{
X'\simeq D_0\otimes X'\ar[r]^-{k\otimes X'}
&D_1\otimes X'
}$$
($D_0$ la catégorie ponctuelle et $k:D_0\to D_1$ le foncteur défini par l'objet $k$ de $D_1$). Il revient au même (voir par exemple~\cite[appendice~B]{DGjoint}) de se donner, pour tout $n\geq0$ et toute $n$\nbd-cellule $x$ de $X'$, une $(n+1)$\nbd-cellule $\transf{\alpha}{x}$ de $X$ satisfaisant aux conditions suivantes:
\smallbreak

0) \textsc{Compatibilité aux sources et aux buts.} Pour tout $n\geq0$, toute $n$\nbd-cellule $x$ de $X'$ et tout $i$, $0\leq i\leq n$, on a
$$\begin{aligned}
&s_i\transf{\alpha}{x}=\transf{\alpha}{t_{i-1}x}\comp{i-1}\cdots\comp{1}\transf{\alpha}{t_{0}x}\comp{0}f_0(s_ix)\,,\cr
\noalign{\vskip 3pt}
&t_i\transf{\alpha}{x}=f_1(t_ix)\comp{0}\transf{\alpha}{s_{0}x}\comp{1}\cdots\comp{i-1}\transf{\alpha}{s_{i-1}x}\ .
\end{aligned}$$
En particulier, pour $i=0$, $s_0\transf{\alpha}{x}=f_0(s_0x)$ et $t_0\transf{\alpha}{x}=f_1(t_0x)$, et si $n=0$, c'est-à-dire si $x$ est un objet de $X'$, 
alors $\transf{\alpha}{x}$ est une $1$\nbd-flèche de $X$ de source $f_0(x)$ et but $f_1(x)$. Pour $i=1$ et $n\geq1$, on a $s_1\transf{\alpha}{x}=\transf{\alpha}{t_0x}\comp{0}f_0(s_1x)$ et $t_1\transf{\alpha}{x}=f_1(t_1x)\comp{0}\transf{\alpha}{s_{0}x}$, et si $n=1$, c'est-à-dire si $x:x_0\to x_1$ est une $1$\nbd-flèche de $X'$, alors $\transf{\alpha}{x}$ est une $2$\nbd-flèche de $X$ de source $\transf{\alpha}{x_1}\comp{0}f_0(x)$ et but $f_1(x)\comp{0}\transf{\alpha}{x_0}$.
\smallbreak

1) \textsc{Compatibilité aux unités.} Pour tout $n\geq0$ et toute $n$\nbd-cellule $x$ de $X'$, on a $\transf{\alpha}{1_x}=1_{\transf{\alpha}{x}}$.
\smallbreak

2) \textsc{Compatibilité aux compositions.} Pour tout $n\geq0$, tout $i$, $0\leq i<n$, et tout couple de $n$\nbd-cellules $i$\nbd-composables $x_0,x_1$ de $X'$, on a
$$\transf{\alpha}{x_1\comp{\kern -1pti}x_0}=f_1(t_{i+1}x_1)\kern-2pt\comp{0}\kern-2pt\transf{\alpha}{s_0}\kern-4pt\comp{1}\kern-2pt\cdots\kern-1pt\comp{\kern -1pt i-1}\kern-2pt\transf{\alpha}{s_{i-1}}\kern-4pt\comp{\kern -1pt i}\kern-2pt\transf{\alpha}{x_0}\kern-2pt\comp{\kern -1pt i+1}\kern-2pt\transf{\alpha}{x_1}\kern-2pt\comp{\kern -1pt i}\kern-2pt\transf{\alpha}{t_{i-1}}\kern-4pt\comp{\kern -1pt i-1}\kern-2pt\cdots\kern-1pt\comp{1}\kern-2pt\transf{\alpha}{t_0}\kern-4pt\comp{0}\kern-2ptf_0(s_{i+1}x_0),$$
où pour tout $k$, $0\leq k<i$, on note
$$s_k=s_kx_0=s_kx_1\qquad\hbox{et}\qquad t_k=t_kx_0=t_kx_1\,.$$
En particulier, pour $i=0$, on a 
$$
\transf{\alpha}{x_1\comp{0}x_0}=(f_1(t_1x_1)\comp{0}\transf{\alpha}{x_0})\comp{1}(\transf{\alpha}{x_1}\comp{0}f_0(s_1x_0))\,.
$$

Si $g$ est un $\infty$-foncteur de source $X$, le composé du $\infty$\nbd-foncteur $\alpha:X'\otimes D_1\to X$ avec $g$ définit une transformation, notée $g*\alpha$, de $gf_0$ vers $gf_1$. De même, si $h$ est un $\infty$\nbd-foncteur de but $X'$, le composé $\alpha(h\otimes1_{D_1})$ définit une transformation, notée $\alpha*h$ de $f_0h$ vers $f_1h$.
\end{paragr}

\begin{lemme}\label{reltransf}
Soient $P$ un polygraphe, $q:X'\to X$ une fibration triviale pour la structure folk sur $\ooCat$ \emph{(cf.~paragraphe~\ref{strfolk})}, $f'_0,f'_1:P\to X'$ deux $\infty$\nbd-foncteurs,\penalty -500{} et $\alpha$ une transformation oplax de $f_0:=qf'_0$ vers $f_1:=qf'_1$. Alors il existe une transformation oplax $\alpha'$ de $f'_0$ vers $f'_1$ telle que $\alpha=q*\alpha'$.
\end{lemme}

\begin{proof}
Considérons le carré commutatif
$$
\xymatrixcolsep{3pc}
\xymatrix{
&P\amalg P\ar[d]\ar[r]^-{(f'_0,f'_1)}
&X'\ar[d]^q
\\
&P\otimes D_1\ar[r]_-{\alpha}
&X
&\kern -30pt,\kern30pt
}$$
où la flèche verticale de gauche est l'inclusion canonique, qui s'identifie au produit de Gray de l'inclusion $S^0=D_0\amalg D_0\hookrightarrow D_1$ par l'objet cofibrant $P$, et est donc, en vertu du théorème~5.6 de~\cite{DimMax}, une cofibration de la structure folk. Comme $q$ est une fibration triviale pour cette structure, ce carré admet un relèvement $\alpha':P\otimes D_1\to X'$, qui est une transformation oplax de $f'_0$ vers $f'_1$ telle que $\alpha=q*\alpha'$. (Pour plus de détails voir~\cite[Lemma~6.5]{Leonard}.)
\end{proof}

\begin{paragr}\label{paragr:prep}
Soient $f_0,f_1:X'\to X$ deux $\infty$\nbd-foncteurs, $\alpha:f_0\Rightarrow f_1$ une transformation oplax 
de $f_0$ vers $f_1$, et $M$ un \SLF{} sur $X$. Le composé $\varphi:=M\ast\alpha$ définit un morphisme de \SLFS{} sur $X'$ (cf.~paragraphe~\ref{FonctCphi})
$$
\varphi:f_1^*(M)\to f_0^*(M)\,.
$$ 
En effet, en vertu des formules de compatibilité de $\alpha$ aux sources et buts, pour tout objet $x'$ de $X'$, $\transf{\alpha}{x'}$ est une $1$\nbd-flèche de $X$ de source $f_0(x')$ et but $f_1(x')$, d'où un morphisme de groupes abéliens $\varphi_{x'}:=\alpha^*_{x'}:=M_{{\alpha}_{x'}}:M_{f_1(x')}\to M_{f_0(x')}$. De même, pour toute $1$\nbd-flèche $x':x'_0\to x'_1$ de $X'$, $\transf{\alpha}{x'}$ est une $2$\nbd-flèche de $X$ de source $\transf{\alpha}{x'_1}\comp{0}f_0(x')$ et but $f_1(x')\comp{0}\transf{\alpha}{x'_0}$. L'existence de cette $2$\nbd-flèche implique que $M_{{\alpha}_{x'_1}\comp{0}f_0(x')}=M_{f_1(x')\comp{0}{\alpha}_{x'_0}}$, d'où en vertu de la fonctorialité contravariante de $M$, la commutativité du carré
$$
\xymatrixcolsep{4pc}
\xymatrix{
&f_1^*(M)_{x'_1}\ar[r]^{\varphi_{x'_1}=M_{{\alpha}_{x'_1}}}\ar[d]_{x'^*=f_1^*(M)_{x'}=M_{f_1(x')}}
&f_0^*(M)_{x'_1}\ar[d]^{x'^*=f_0^*(M)_{x'}=M_{f_0(x')}}
\\
&f_1^*(M)_{x'_0}\ar[r]_{\varphi_{x'_0}=M_{{\alpha}_{x'_0}}}
&f_0^*(M)_{x'_0}
&\kern 13pt.
}
$$
On en déduit un morphisme de \SLFS{} dans $\ooCat$ 
$$
(f_0,\varphi):(X',f_1^*(M))\to(X,M)\,,
$$
d'où (cf.~paragraphe~\ref{DoubleFonct}) un morphisme de complexes 
$$
\C(f_0,\varphi):\C(X'\kern -2pt,f_1^*(M))\to\C(X,M)
$$
composé des morphismes
$$
\xymatrixcolsep{3.5pc}
\xymatrix{
\C(X'\kern -2pt,f_1^*(M))\ar[r]^-{\C(X'\kern -3pt,\kern1pt\varphi)}
&\C(X'\kern -2pt,f_0^*(M))\ar[r]^-{\C(f_0,M)}
&\C(X,M)\,.
}
$$
Par ailleurs, on a aussi un morphisme de complexes de même source et même but
$$
\C(f_1,M):\C(X'\kern -2pt,f_1^*(M))\to\C(X,M)\,.
$$
\end{paragr}

\begin{lemme}\label{lemme:3.12}
En gardant les hypothèses et notations ci-dessus, on a une homotopie de morphismes de complexes $h$ de $\C(f_0,\varphi)$ vers $\C(f_1,M)$, définie par
$$
\C_n(X'\kern -2pt,f_1^*(M))\to\C_{n+1}(X,M)\,,\qquad(x',m)\mapsto(\transf{\alpha}{x'},m)\,,\qquad n\geq0\,.
$$
\end{lemme}

\begin{proof}
Posons $M'=f_1^*(M)$. On remarque d'abord que pour tout $n\geq0$, tout $x'\in X'_n$, et tout $m\in M'_{t_0x'}$, comme en vertu des formules de compatibilité d'une transformation aux buts $t_0\transf{\alpha}{x'}=f_1(t_0x')$ et comme par définition $M'_{t_0x'}=M_{f_1(t_0x')}$, on a $m\in M_{t_0\transf{\alpha}{x'}}$, et l'application $(x',m)\mapsto(\transf{\alpha}{x'},m)$ définit donc un morphisme de groupes abéliens
$$
\textstyle\bigoplus\limits_{x'\in X'_n}\kern -2pt M'_{t_0x'}\longrightarrow\kern -5pt\textstyle\bigoplus\limits_{x\in X_{n+1}}\kern -5pt M_{t_0x}\ .
$$
Montrons que ce morphisme est compatible aux relations $(*_i)$ du paragraphe~\ref{relfond}. Soient donc $0\leq i<n$ et $x'_0,x'_1$ deux $n$\nbd-cellules $i$\nbd-composables de $X'$. On distingue deux cas:
\smallbreak

--- $i=0$. En vertu de la formule de compatibilité d'une transformation à la $0$\nbd-composition, la relation $(*_1)$, et la remarque~\ref{remunit}, on a
$$\begin{aligned}
h_n(x'_1\comp{0}x'_0,m)&=(\transf{\alpha}{x'_1\comp{0}x'_0},m)\cr
&=((f_1(t_1x'_1)\comp{0}\transf{\alpha}{x'_0})\comp{1}(\transf{\alpha}{x'_1}\comp{0}f_0(s_1x'_0)),m)\cr
&\sim_{n+1}(f_1(t_1x'_1)\comp{0}\transf{\alpha}{x'_0},m)+(\transf{\alpha}{x'_1}\comp{0}f_0(s_1x'_0),m)\cr
&\sim_{n+1}(\transf{\alpha}{x'_0},f_1(t_1x'_1)^*(m))+(\transf{\alpha}{x'_1},m)\cr
&=(\transf{\alpha}{x'_1},m)+(\transf{\alpha}{x'_0},t_1(x'_1)^*(m))\cr
&=h_n((x'_1,m)+(x'_0, t_1(x'_1)^*(m)))
\end{aligned}$$
(en se souvenant, pour l'avant dernière égalité, que $f_1(t_1x'_1)^*(m)=t_1(x'_1)^*(m)$, où \og l'étoile en haut\fg{} du membre de gauche est relative au \SLF{} $M$ sur~$X$, tandis que celle du membre de droite est relative au \SLF{} $M'$ sur $X'$).
\smallbreak

--- $i>0$. En vertu de la formule de compatibilité d'une transformation à la $i$\nbd-composition, la relation $(*_{i+1})$, et la remarque~\ref{remunit}, on a
$$\begin{aligned}
h_n(x'_1\comp{i}x'_0,m)&=(\transf{\alpha}{x'_1\comp{i}x'_0},m)\cr
&\sim_{n+1}(f_1(t_{i+1}x'_1)\comp{0}\transf{\alpha}{s_0x'_0}\comp{1}\cdots\comp{i-1}\transf{\alpha}{s_{i-1}x'_0}\comp{i}\transf{\alpha}{x'_0},\,m)\cr
&\kern 16pt+\,(\transf{\alpha}{x'_1}\comp{i}\transf{\alpha}{t_{i-1}x'_1}\comp{i-1}\cdots\comp{1}\transf{\alpha}{t_0x'_1}\comp{0}f_0(s_{i+1}x'_0),\,m)\cr
&\sim_{n+1}(\transf{\alpha}{x'_0},m)+(\transf{\alpha}{x'_1},m)=h_n((x'_1,m)+(x'_0,m))\,.
\end{aligned}$$
Il reste à montrer que, pour $n\geq0$, on a 
$$d_{n+1}h_n+h_{n-1}d_n=\C_n(f_1,M)-\C_n(f_0,\varphi)$$
(avec la convention $d_0=0$ et $h_{-1}=0$). On distingue trois cas:
\smallbreak

--- $n=0$, $x'$ objet de $X'$, $m\in M'_{x'}=M_{f_1x'}$. On a
$$\begin{aligned}
d_1h_0(x',m)&=d_1(\transf{\alpha}{x'},m)=(t_0\transf{\alpha}{x'},m)-(s_0\transf{\alpha}{x'},\alpha_{x'}^*(m))\cr
&=(f_1(x'),m)-(f_0(x'),\varphi_{x'}(m))=(\C_0(f_1,M)-\C_0(f_0,\varphi))(x',m)\,.
\end{aligned}$$

--- $n=1$, $x'$ $1$\nbd-flèche de $X'$, $m\in M'_{t_0x'}$. En vertu de la formule de compatibilité d'une transformation aux sources et buts et de la relation $*_0$, on a
$$\begin{aligned}
(d_{2}h_1+h_{0}d_1)(x',m)&=d_2(\transf{\alpha}{x'},m)+h_0((t_0x',m)-(s_0x',x'{}^*(m)))\cr
&=(t_1\transf{\alpha}{x'},m)-(s_1\transf{\alpha}{x'},m)+(\transf{\alpha}{t_0x'},m)-(\transf{\alpha}{s_0x'},x'{}^*(m))\cr
&=(f_1(x')*_0\transf{\alpha}{s_0x'},m)-(\transf{\alpha}{t_0x'}*_0f_0(x'),m)\cr
&\qquad+(\transf{\alpha}{t_0x'},m)-(\transf{\alpha}{s_0x'},x'{}^*(m))\cr
&\sim_1(f_1(x'),m)+(\transf{\alpha}{s_0x'},f_1(x')^*(m))\cr
&\qquad-(\transf{\alpha}{t_0x'},m)-(f_0(x'),\alpha_{t_0x'}^*(m))\cr
&\qquad+(\transf{\alpha}{t_0x'},m)-(\transf{\alpha}{s_0x'},x'{}^*(m))\cr
&=(f_1(x'),m)-(f_0(x'),\varphi^{}_{t_0x'}(m))\cr
&=(\C_1(f_1,M)-\C_1(f_0,\varphi))(x',m)\,.
\end{aligned}$$

--- $n>1$, $x'$ $n$\nbd-flèche de $X'$, $m\in M'_{t_0x'}$. En vertu de la formule de compatibilité d'une transformation aux sources et buts, de la relation $*_{n-1}$, et de la remarque~\ref{remunit}, on a
$$\begin{aligned}
(d_{n+1}h_n+h_{n-1}d_n)(x',m)&=d_{n+1}(\transf{\alpha}{x'},m)+h_{n-1}((t_{n-1}x',m)-(s_{n-1}x',m))\cr
&=(t_n\transf{\alpha}{x'},m)-(s_n\transf{\alpha}{x'},m)+(\transf{\alpha}{t_{n-1}x'},m)-(\transf{\alpha}{s_{n-1}x'},m)\cr
&=(f_1(x')\comp{0}\transf{\alpha}{s_{0}x'}\comp{1}\cdots\comp{n-1}\transf{\alpha}{s_{n-1}x'},m)\cr
&\qquad-(\transf{\alpha}{t_{n-1}x'}\comp{n-1}\cdots\comp{1}\transf{\alpha}{t_{0}x'}\comp{0}f_0(x'),m)\cr
&\qquad+(\transf{\alpha}{t_{n-1}x'},m)-(\transf{\alpha}{s_{n-1}x'},m)\cr
&\sim_n(f_1(x'),m)+(\transf{\alpha}{s_{n-1}x'},m)\cr
&\qquad-(\transf{\alpha}{t_{n-1}x'},m)-(f_0(x'),\alpha_{t_0x'}^*(m))\cr
&\qquad+(\transf{\alpha}{t_{n-1}x'},m)-(\transf{\alpha}{s_{n-1}x'},m)\cr
&=(f_1(x'),m)-(f_0(x'),\varphi^{}_{t_0x'}(m))\cr
&=(\C_n(f_1,M)-\C_n(f_0,\varphi))(x',m)\,,
\end{aligned}$$
ce qui achève la démonstration du lemme.
\end{proof}

\begin{prop}\label{propqis}
Soient $f_0,f_1:X'\to X$ deux $\infty$\nbd-foncteurs, $\alpha:f_0\Rightarrow f_1$ une transformation oplax 
de $f_0$ vers $f_1$, et $M$ un \SLF{} sur $X$. On suppose que pour tout objet $x'$ de $X'$, le morphisme de groupes abéliens $\alpha^*_{x'}:=M_{{\alpha}_{x'}}:M_{f_1(x')}\to M_{f_0(x')}$ est un isomorphisme. Alors le morphisme de complexes $\C(f_0,M):\C(X',f_0^*(M))\to\C(X,M)$ est un quasi-isomorphisme si et seulement si le morphisme $\C(f_1,M):\C(X',f_1^*(M))\to\C(X,M)$ l'est.
\end{prop}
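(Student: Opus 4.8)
Le plan est de tout d\'eduire du lemme~\ref{lemme:3.12} et de l'hypoth\`ese, cette derni\`ere \'etant pr\'ecis\'ement ce qu'il faut pour rendre inversible le morphisme de comparaison $\varphi$ introduit au paragraphe~\ref{paragr:prep}.

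Tout d'abord, on reprendrait les notations du paragraphe~\ref{paragr:prep}: la transformation oplax $\alpha$ de $f_0$ vers $f_1$ donne lieu \`a un morphisme de \SLFS{} sur $X'$
$$
\varphi:=M\ast\alpha:f_1^*(M)\to f_0^*(M)\,,
$$
dont la composante en un objet $x'$ de $X'$ est exactement $\varphi_{x'}=\alpha^*_{x'}=M_{\alpha_{x'}}$. Par hypoth\`ese, chacun des morphismes $\varphi_{x'}$ est un isomorphisme de groupes ab\'eliens; comme une transformation naturelle de foncteurs est inversible d\`es que toutes ses composantes le sont, $\varphi$ est un isomorphisme dans la cat\'egorie des \SLFS{} sur $X'$. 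En vertu de la fonctorialit\'e de $\C(X',-)$ rappel\'ee au paragraphe~\ref{FonctCphi}, il s'ensuit que $\C(X',\varphi):\C(X',f_1^*(M))\to\C(X',f_0^*(M))$ est un isomorphisme de complexes, en particulier un quasi-isomorphisme.

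Ensuite, on invoquerait \`a nouveau le paragraphe~\ref{paragr:prep}, selon lequel $\C(f_0,\varphi)$ est le compos\'e $\C(f_0,M)\circ\C(X',\varphi)$. Comme $\C(X',\varphi)$ est un quasi-isomorphisme, le morphisme $\C(f_0,M)$ en est un si et seulement si $\C(f_0,\varphi)$ en est un. D'autre part, le lemme~\ref{lemme:3.12} fournit une homotopie de morphismes de complexes entre $\C(f_0,\varphi)$ et $\C(f_1,M)$, de sorte que ces deux morphismes induisent le m\^eme morphisme en homologie; ainsi $\C(f_0,\varphi)$ est un quasi-isomorphisme si et seulement si $\C(f_1,M)$ en est un. En combinant ces deux \'equivalences, on conclut.

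Il ne reste ainsi essentiellement aucun obstacle: le seul point auquel il faut prendre garde est le sens des variances (la transformation $\alpha$ de $f_0$ vers $f_1$ induisant $\varphi$ de $f_1^*(M)$ vers $f_0^*(M)$, et non l'inverse), ainsi que le constat, plus qu'une difficult\'e, que l'hypoth\`ese portant sur les $\alpha^*_{x'}$ \'equivaut exactement \`a l'inversibilit\'e de $\varphi$. Tout le travail de fond a d\'ej\`a \'et\'e effectu\'e dans l'\'etablissement de l'homotopie du lemme~\ref{lemme:3.12}.
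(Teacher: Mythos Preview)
Ta preuve est correcte et suit essentiellement la m\^eme d\'emarche que celle du papier: on combine l'homotopie du lemme~\ref{lemme:3.12} entre $\C(f_1,M)$ et $\C(f_0,\varphi)=\C(f_0,M)\circ\C(X',\varphi)$ avec le fait que l'hypoth\`ese rend $\C(X',\varphi)$ inversible. Le papier expose ces deux ingr\'edients dans l'ordre inverse et de fa\c con plus condens\'ee, mais il n'y a aucune diff\'erence de substance.
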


\begin{proof}
En vertu du lemme \ref{lemme:3.12} et dans les notations du paragraphe~\ref{paragr:prep}, il existe une homotopie entre les morphismes de complexes $\C(f_1,M)$ et $\C(f_0,\varphi)=\C(f_0,M)\circ\C(X',\varphi)$. On en déduit que $\C(f_1,M)$ est un quasi-isomorphisme si et seulement si $\C(f_0,\varphi)$ l'est. Or, l'hypothèse de la proposition implique aussitôt que $\C(X',\varphi)$ est un isomorphisme, ce qui achève la démonstration.
\end{proof}

\begin{cor}\label{retrqis}
Soient $j:X'\to X$ un $\infty$\nbd-foncteur et $M$ un \SLF{} sur $X$. On suppose que $j$ admet une rétraction $r$ et qu'il existe une transformation oplax $\alpha$ de $jr$ vers $1_X$ telle que pour tout objet $x$ de $X$, $\alpha^*_x:=M_{\alpha_x}:M_{jr(x)}\to M_x$ soit un isomorphisme. Alors le morphisme de complexes $\C(j,M):\C(X',j^*(M))\to\C(X,M)$ est un quasi-isomorphisme.
\end{cor}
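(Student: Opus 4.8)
The plan is to reduce the statement to Proposition~\ref{propqis}. I would first apply that proposition to the pair of $\infty$-functors $f_0:=jr$ and $f_1:=1_X$ from $X$ to itself, together with the oplax transformation $\alpha\colon jr\Rightarrow 1_X$ supplied by the hypothesis. The hypothesis required by Proposition~\ref{propqis}, namely that $\alpha^*_x=M_{\alpha_x}$ be an isomorphism for every object $x$ of $X$, is exactly the assumption made on $\alpha$ in the statement. Since $\C(1_X,M)=1_{\C(X,M)}$ is trivially a quasi-isomorphism, Proposition~\ref{propqis} then gives that $\C(jr,M)\colon\C(X,(jr)^*(M))\to\C(X,M)$ is a quasi-isomorphism, where $(jr)^*(M)=r^*(j^*(M))$.

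The second step is purely formal, relying on the functoriality of paragraph~\ref{FonctCf}. On one hand, $\C(jr,M)=\C(j,M)\circ\C(r,j^*(M))$. On the other hand, since $rj=1_{X'}$ and hence $j^*(r^*(j^*(M)))=j^*(M)$, one has $\C(r,j^*(M))\circ\C(j,r^*j^*(M))=\C(rj,j^*(M))=1_{\C(X',j^*(M))}$. Passing to homology in degree $n$: the first identity shows that $H_n(\C(jr,M))=H_n(\C(j,M))\circ H_n(\C(r,j^*(M)))$ is an isomorphism, whence $H_n(\C(r,j^*(M)))$ is injective; the second shows that $H_n(\C(r,j^*(M)))$ admits a right inverse, whence it is surjective. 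Therefore $H_n(\C(r,j^*(M)))$ is an isomorphism, and consequently so is $H_n(\C(j,M))=H_n(\C(jr,M))\circ H_n(\C(r,j^*(M)))^{-1}$. As this holds for every $n\ge 0$, $\C(j,M)$ is a quasi-isomorphism, which is the assertion.

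I do not anticipate any real obstacle here; the only points requiring attention are the bookkeeping of the local systems under the various pullbacks (the identities $(jr)^*(M)=r^*j^*(M)$ and $j^*r^*j^*(M)=j^*(M)$, both consequences of $rj=1_{X'}$), and matching the direction of the transformation $\alpha$ correctly with the roles of $f_0$ and $f_1$ when invoking Proposition~\ref{propqis}. Once that is set up, the rest is the elementary observation that a homomorphism of abelian groups which is both injective and split epimorphic is an isomorphism.
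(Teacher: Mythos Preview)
Your proof is correct and follows essentially the same route as the paper's: apply Proposition~\ref{propqis} to $f_0=jr$, $f_1=1_X$ and $\alpha$ to conclude that $\C(jr,M)=\C(j,M)\circ\C(r,j^*(M))$ is a quasi-isomorphism, then use the identity $\C(r,j^*(M))\circ\C(j,r^*j^*(M))=1$ coming from $rj=1_{X'}$ to deduce that $\C(r,j^*(M))$, and hence $\C(j,M)$, is a quasi-isomorphism. The only difference is that you spell out the injectivity/surjectivity argument on homology, whereas the paper states the conclusion directly.
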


\begin{proof}
En vertu de la proposition \ref{propqis}, l'hypothèse du corollaire implique que le morphisme de complexes $\C(jr,M)=\C(j,M)\circ\C(r,j^*(M))$ est un quasi-isomorphisme. Or, comme $r$ est une rétraction de $j$, on a l'égalité $1_{\C(X'\kern-2pt,\kern1ptj^*(M))}=\C(r,j^*(M))\circ\C(j,r^*j^*(M))$. On en déduit que $\C(r,j^*(M))$ est un quasi-isomorphisme, donc $\C(j,M)$ aussi.
\end{proof}

\begin{rem}\label{remast}
L'hypothèse sur la transformation $\alpha$ dans la proposition \ref{propqis} et dans le corollaire \ref{retrqis} est satisfaite par exemple si $M$ est un \SL, et pas seulement un \SLF, ou si pour tout objet $x'$ de $X'$ l'image de $\transf{\alpha}{x'}$ dans la catégorie fondamentale de $X$ ($1$\nbd-tronqué intelligent de $X$) est un isomorphisme.
\end{rem}

\begin{thm}\label{th:Quilleng}
Soient $X$ une $\infty$\nbd-catégorie et $M$ un \SLF. Le foncteur $\F=\F_{X,M}:\cm{\ooCat}{X}\to\Comp(\Ab)$ de la catégorie des $\infty$\nbd-catégories au-dessus de $X$ vers celle des complexes de groupes abéliens, associant à $(X',p:X'\to X)$ le complexe $\C(X',p^*(M))$ est un foncteur de Quillen à gauche pour la structure de catégories de modèles sur $\cm{\ooCat}{X}$ induite de la structure folk sur $\ooCat$ et la structure injective sur $\Comp(\Ab)$ \emph{(ayant les quasi-isomorphismes comme équivalences faibles et les monomorphismes comme cofibrations)}.  
\end{thm}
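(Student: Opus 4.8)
The plan is to verify the standard recognition criterion for left Quillen functors: since $\cm{\ooCat}{X}$ is cofibrantly generated for the model structure induced from the folk structure, it suffices to show that $\F$ admits a right adjoint, that $\F$ carries each generating cofibration of $\cm{\ooCat}{X}$ to a monomorphism, and that $\F$ carries each generating trivial cofibration to a quasi\nobreakdash-isomorphism (the latter maps are then automatically monomorphisms, hence trivial cofibrations of the injective structure, because a left adjoint sending the generating cofibrations to monomorphisms preserves all cofibrations, monomorphisms in $\Comp(\Ab)$ being stable under pushout, transfinite composition and retracts). Recall that the generating cofibrations of $\cm{\ooCat}{X}$ are the maps $(S^{n-1}\to X)\to(D_n\xrightarrow{\,\phi\,}X)$, for $n\geq0$ and $\phi\colon D_n\to X$, and the generating trivial cofibrations the maps $(D_n\to X)\to(J_{n+1}\xrightarrow{\,\psi\,}X)$, for $n\geq0$ and $\psi\colon J_{n+1}\to X$; the main work will be the latter. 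To see that $\F$ is a left adjoint I would check that it commutes with small colimits: colimits in $\cm{\ooCat}{X}$ are created by the forgetful functor to $\ooCat$, and although $Y\mapsto Y_n$ is not cocontinuous, the relations $\sim_n$ are designed precisely to compensate — in a pushout, each newly created composite cell is rewritten via $(*_i)$ as a sum of images of cells of the two summands, so that the natural comparison maps become bijective, while coproducts and filtered colimits are harmless. As $\cm{\ooCat}{X}$ and $\Comp(\Ab)$ are locally presentable, $\F$ then admits a right adjoint (this is the $\ooCat$\nobreakdash-analogue of the simplicial argument in the proof of Theorem~\ref{thtopol}; one may also write down a right adjoint by hand).

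Next, for a generating cofibration, fix $\phi\colon D_n\to X$ and put $M'=\phi^*(M)$, with $M''$ its restriction to $S^{n-1}$. Since $S^{n-1}$ is the bête $(n-1)$\nobreakdash-truncation of $D_n$, it shares all $k$\nobreakdash-cells with $D_n$ — with the same composition operations and $0$\nobreakdash-targets — for $k\leq n-1$; for $k>n$ every $k$\nobreakdash-cell of $D_n$ and every $n$\nobreakdash-cell of $S^{n-1}$ is a unit, hence vanishes in $\C$ by Remark~\ref{remunit}; and $\C_n(D_n,M')\simeq M'_{t_0\zeta_n}$, since $\zeta_n$ is the only indecomposable $n$\nobreakdash-cell of $D_n$. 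Consequently $\C(S^{n-1},M'')\to\C(D_n,M')$ is the identity in degrees $<n$, the inclusion $0\hookrightarrow M'_{t_0\zeta_n}$ in degree $n$, and $0\to0$ in higher degrees; in particular it is a monomorphism (the cases $n=0,1$ are checked directly, with the same outcome).

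The crux is the generating trivial cofibrations. Fix $\psi\colon J_{n+1}\to X$; we must show that $\C(j_n,\psi^*(M))\colon\C(D_n,(\psi j_n)^*(M))\to\C(J_{n+1},\psi^*(M))$ is a quasi\nobreakdash-isomorphism. Recall that $j_n=k_n\sigma$, where $\sigma\colon D_n\hookrightarrow S^n$ is induced by the cosource, and that $r_nk_n$ is the folding map $S^n\to D_n$, whose composite with $\sigma$ is $1_{D_n}$; hence $r_nj_n=1_{D_n}$, so $r_n$ is a retraction of $j_n$. Moreover $J_{n+1}$, being the codomain of the cofibration $k_n$ out of the cofibrant object $S^n$, is cofibrant, hence a polygraph. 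So I would apply Lemma~\ref{reltransf} with $P=J_{n+1}$, the trivial fibration $r_n\colon J_{n+1}\to D_n$, the $\infty$\nobreakdash-functors $f'_0=j_nr_n$ and $f'_1=1_{J_{n+1}}$ (which satisfy $r_nf'_0=r_nf'_1=r_n$), and the identity transformation $1_{r_n}$ of $r_n$: this yields an oplax transformation $\alpha$ of $j_nr_n$ towards $1_{J_{n+1}}$ with $r_n*\alpha=1_{r_n}$. For every object $x$ of $J_{n+1}$ we then get $r_n(\alpha_x)=1_{r_n(x)}$, so the image of $\alpha_x$ under $c_1(r_n)$ is an identity, in particular an isomorphism of $c_1(D_n)$; as $r_n$, being a trivial fibration, is a folk equivalence, the functor $c_1(r_n)$ is an equivalence of categories, hence reflects isomorphisms, so that the image of $\alpha_x$ in $c_1(J_{n+1})$ is itself an isomorphism. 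By Remark~\ref{remast} the hypothesis of Corollary~\ref{retrqis} is therefore met — with ambient $\infty$\nobreakdash-category $J_{n+1}$, weak local system $\psi^*(M)$, $\infty$\nobreakdash-functor $j=j_n$, retraction $r=r_n$, and transformation $\alpha$ — and that corollary yields that $\C(j_n,\psi^*(M))$ is a quasi\nobreakdash-isomorphism.

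Putting the three points together, $\F$ is a left adjoint sending generating cofibrations to monomorphisms and generating trivial cofibrations to quasi\nobreakdash-isomorphisms, hence a left Quillen functor. The genuinely delicate step is the third one, and it really depends on the precise shape of the generating trivial cofibrations: one uses both $r_nj_n=1_{D_n}$ and, crucially, that $r_n$ is a \emph{trivial fibration}, so that invertibility of the components $\alpha_x$ can be detected after the equivalence $c_1(r_n)$. For an arbitrary folk trivial cofibration the components of a comparison transformation need not become invertible in the fundamental category, and $\C$ with coefficients in a merely \emph{weak} local system is then not invariant under such maps; had $M$ been an honest local system, the first clause of Remark~\ref{remast} would have made this step immediate.
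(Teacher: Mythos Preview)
Your proof follows the same three-step scaffold as the paper (right adjoint, generating cofibrations to monomorphisms, generating trivial cofibrations to quasi-isomorphisms), but with two differences worth recording. First, for the existence of a right adjoint the paper does not argue cocontinuity: it constructs $\G_{X,M}$ explicitly (Appendix~\ref{adj}); your sketch that ``the relations $\sim_n$ compensate'' for the failure of $Y\mapsto Y_n$ to preserve pushouts is plausible but is not a proof, and the explicit adjoint is the safer route (which you do acknowledge as an alternative). Second, and more interestingly, your handling of the generating trivial cofibrations is cleaner than the paper's. Both arguments lift the identity transformation of $r_n$ via Lemma~\ref{reltransf} to an oplax $\alpha\colon j_nr_n\Rightarrow 1_{J_{n+1}}$, but to verify the hypothesis of Corollary~\ref{retrqis} the paper appeals to the explicit cell structure of $J_{n+1}$ and splits into cases: for $n>0$ it uses that objects of $J_{n+1}$ have no nontrivial $1$\nbd-endomorphisms and that $j_nr_n$ is the identity on objects (forcing $\alpha_x=1_x$), while for $n=0$ it uses that $c_1(J_1)$ is a groupoid. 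You instead argue uniformly: from $r_n*\alpha=1_{r_n}$ one gets $r_n(\alpha_x)=1_{r_n(x)}$; since $r_n$ is a folk equivalence, $c_1(r_n)$ is an equivalence of categories (paragraphe~\ref{strfolk}) and hence reflects isomorphisms, so the image of each $\alpha_x$ in $c_1(J_{n+1})$ is invertible, and the second clause of Remark~\ref{remast} applies. This avoids any reference to the concrete description of $J_{n+1}$ and shows that what is really being used is just that $r_n$ is a trivial fibration retracting $j_n$.
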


Ce théorème résulte des trois lemmes suivants.

\begin{lemme}\label{lemme:3.17}
Soient $X$ une $\infty$\nbd-catégorie et $M$ un \SLF. Le foncteur
$$
\F=\F_{X,M}:\cm{\ooCat}{X}\to\Comp(\Ab)\,,\qquad (X',p:X'\to X)\mapsto\C(X',p^*(M))\,,
$$
admet un adjoint à droite.
\end{lemme}

\begin{proof}
Voir appendice~\ref{adj}.
\end{proof}

\begin{lemme}\label{lemme:cof}
En gardant les hypothèses et les notations du lemme précédent, le foncteur $\F$ respecte les cofibrations.
\end{lemme}

\begin{proof}
Comme, en vertu du lemme \ref{lemme:3.17}, le foncteur $\F$ admet un adjoint à droite, il commute aux limites inductives et par suite il suffit de montrer qu'il transforme les cofibrations génératrices en cofibrations. On rappelle que les cofibrations génératrices de la structure folk sur $\ooCat$ sont les inclusions $S^{n-1}\hookrightarrow D_n$, $n\geq 0$, où $D_n$ est la $n$\nbd-catégorie ayant une seule cellule non triviale $x$ en dimension $n$ et exactement deux cellules non triviales en dimension $0\leq i<n$, à savoir $s_ix$ et~$t_ix$, et~$S^{n-1}$ la sous-$n$\nbd-catégorie de $D_n$, ayant les mêmes cellules que $D_n$ en dimension~$<n$, et pas de cellule non triviale en dimension $n$~\cite{LMW}. On en déduit que les cofibrations génératrices de la structure induite sur $\cm{\ooCat}{X}$ sont les inclusions $S^{n-1}\hookrightarrow D_n$ \emph{au-dessus de $X$}, indexées par $n\geq0$ et $p:D_n\to X$, $\infty$\nbd-foncteur arbitraire. Comme il n'y a aucune composition non triviale dans $D_n$ et $S^{n-1}$, il résulte de la remarque~\ref{remunit} que l'image par le foncteur $\F$ d'une telle inclusion s'identifie à l'inclusion des complexes $N'\hookrightarrow N$, où, en posant $M'=p^*(M)$, le complexe $N$ est défini par
$$\begin{aligned}
&N_0=M'_{t_0x}\oplus M'_{s_0x}\,,\ N_i=M'_{t_0x}\oplus M'_{t_0x}\,,\ 0<i<n\,,\ N_n=M'_{t_0x}\,,\ N_i=0\,,\ i>n\,,\cr
&d_1=
\begin{pmatrix}
1 &1
\\
-t_1(x)^* &-t_1(x)^*
\end{pmatrix},\quad
d_i=
\begin{pmatrix}
1 &1
\\
-1 &-1
\end{pmatrix},\ 1<i<n\,,\quad
d_n=
\begin{pmatrix}
1 
\\
-1
\end{pmatrix},
\end{aligned}$$
(avec les adaptations évidentes de ces formules pour $n=0,1$) et $N'$ est le sous-complexe de $N$ tel que $N'_n=0$ et $N'_i=N_i$, pour $0\leq i<n$, ce qui achève la démonstration.
\end{proof}

\begin{lemme}
En gardant les notations et les hypothèses des lemmes précédents, le foncteur $\F$ respecte les cofibrations triviales.
\end{lemme}

\begin{proof}
Comme en vertu des lemmes \ref{lemme:3.17} et \ref{lemme:cof} le foncteur $\F$ commute aux limites inductives et respecte les cofibrations, il suffit de montrer qu'il transforme les cofibrations triviales génératrices en équivalences faibles. 
\smallbreak

On rappelle que les cofibrations triviales génératrices de la structure folk sur $\ooCat$ sont des inclusions de la forme $j_n:D_n\to J_{n+1}$, $n\geq0$, admettant une rétraction $r_n$ qui est aussi une fibration triviale~\cite{LMW}. De plus, on peut choisir le polygraphe $J_{n+1}$ de sorte que son $(n+2)$\nbd-tronqué bête soit le $(n+2)$\nbd-polygraphe obtenu de $D_{n+1}$ en adjoignant une $(n+1)$\nbd-flèche $v$ dans le sens opposé de celui de sa cellule principale $u:x\to y$ et deux $(n+2)$\nbd-flèches $\varepsilon:v*_nu\to1_x$ et $\eta:1_y\to u*_nv$~\cite[Remark~20.4.4]{Poly}. Le morphisme $j_n$ est alors induit par la cosource $\sigma:D_n\to D_{n+1}$. On en déduit que pour $n=0$, la catégorie fondamentale $c_1(J_1)$ de $J_1$ est un groupoïde trivial, et que pour $n>1$, les objets de $J_{n+1}$ n'ont pas de $1$\nbd-endomorphismes non triviaux et que $j_nr_n$ induit l'identité sur les objets.
\smallbreak

La famille des inclusions $j_n:D_n\to J_{n+1}$ \emph{au-dessus de $X$}, indexée par $n\geq0$ et $p:J_{n+1}\to X$, $\infty$\nbd-foncteur arbitraire, engendre les cofibrations triviales de $\cm{\ooCat}{X}$. Il s'agit de montrer que $\C(j_n,p^*(M))$ est un quasi-isomorphisme. Or, comme $J_{n+1}$ est un polygraphe et la rétraction $r_n$ de $j_n$ une fibration triviale, en vertu du lemme~\ref{reltransf}, la transformation oplax identique de $r_n$ se relève (en remarquant que $r_nj_nr_n=r_n$) en une transformation oplax $\alpha$ de $j_nr_n$ vers $1_{J_{n+1}}$. Pour conclure, il suffit de vérifier que $\alpha$ satisfait aux hypothèses du corollaire~\ref{retrqis} (relativement à $p^*(M)$). Si $n>0$, comme $j_nr_n$ induit l'identité sur les objets et comme les objets de $J_{n+1}$ n'admettent pas de $1$\nbd-endomorphismes non triviaux, pour tout objet~$x$ de $J_{n+1}$ on a $\transf{\alpha}{x}=1_x$, et à plus forte raison l'hypothèse du corollaire est satisfaite. Cette hypothèse est également satisfaite pour $n=0$, puisque la catégorie fondamentale de~$J_1$ est un groupoïde (cf.~remarque~\ref{remast}).
\end{proof}

\begin{cor}\label{cor:3.20}
Soient $X$ une $\infty$\nbd-catégorie et $M$ un \SLF. Le foncteur $\F=\F_{X,M}:\cm{\ooCat}{X}\to\Comp(\Ab)$ de la catégorie des $\infty$\nbd-catégories au-dessus de $X$, munie des équivalences faibles induites de celles de la structure folk, vers celle des complexes de groupes abéliens, munie des quasi-isomorphismes, associant à $(X',p:X'\to X)$ le complexe $\C(X',p^*(M))$ admet un foncteur dérivé total à gauche
$$
\LL F=\LL \F_{X,M}:\Ho(\cm{\ooCat}{X})\longrightarrow\Ho(\Comp(\Ab))=\Der(\Ab)
$$
de la catégorie homotopique de $\cm{\ooCat}{X}$ vers la catégorie dérivée des groupes abéliens.
\end{cor}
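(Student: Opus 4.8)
The plan is to deduce this immediately from Theorem~\ref{th:Quilleng}, which asserts that $\F=\F_{X,M}$ is a left Quillen functor for the model structure on $\cm{\ooCat}{X}$ induced from the folk structure and the injective structure on $\Comp(\Ab)$. It is a formal fact of Quillen's theory that a left Quillen functor between model categories admits a total left derived functor, so essentially no work remains beyond recalling the construction in the present situation; I spell it out for completeness.

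First I would note that the model structure on $\cm{\ooCat}{X}$ is cofibrantly generated: its generating cofibrations (resp.~generating trivial cofibrations) are the maps $S^{n-1}\hookrightarrow D_n$ (resp.~the maps $j_n\colon D_n\to J_{n+1}$), $n\geq0$, equipped with arbitrary $\infty$-functors to $X$, exactly as exploited in the proofs above. Hence the small object argument provides a functorial cofibrant replacement $Q\colon\cm{\ooCat}{X}\to\cm{\ooCat}{X}$ together with a natural transformation $q\colon Q\Rightarrow\mathrm{id}$ all of whose components are weak equivalences with cofibrant source.

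Next, by Ken Brown's lemma a left Quillen functor sends weak equivalences between cofibrant objects to weak equivalences; therefore $\F\circ Q$ sends every weak equivalence of $\cm{\ooCat}{X}$ to a quasi-isomorphism. Writing $\gamma\colon\Comp(\Ab)\to\Der(\Ab)$ for the localization functor and $\gamma'\colon\cm{\ooCat}{X}\to\Ho(\cm{\ooCat}{X})$ for that of the slice category, the universal property of $\gamma'$ yields a unique functor $\LL\F\colon\Ho(\cm{\ooCat}{X})\to\Der(\Ab)$ with $\LL\F\circ\gamma'=\gamma\circ\F\circ Q$. Finally I would check that $\LL\F$, together with the natural transformation $\LL\F\circ\gamma'\Rightarrow\gamma\circ\F$ induced by $\gamma\F q$, satisfies the universal property of the total left derived functor; this is the routine verification, which comes down to the observation that for a cofibrant object $(X',p)$ the component $q_{(X',p)}$ is a weak equivalence between cofibrant objects, so $\F(q_{(X',p)})$ is a quasi-isomorphism.

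I do not expect any genuine obstacle: all the substance sits in Theorem~\ref{th:Quilleng}, itself reduced above to its three supporting lemmas. The one point deserving a word of care is that, in contrast with the simplicial situation of Theorem~\ref{thtopol}, not every object of $\cm{\ooCat}{X}$ is cofibrant --- the cofibrant ones are precisely the pairs $(X',p)$ with $X'$ a polygraph --- so the cofibrant replacement $Q$ is genuinely needed and cannot be dispensed with.
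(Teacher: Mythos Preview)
Your proposal is correct and follows exactly the same approach as the paper: the corollary is deduced directly from Theorem~\ref{th:Quilleng} together with the standard existence theorem for total left derived functors of left Quillen functors. The paper simply cites this existence theorem (e.g.\ \cite[Theorem~8.5.8]{Hi}) without spelling out the cofibrant-replacement construction you detail, but the content is the same.
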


\begin{proof}
Le corollaire est conséquence directe du théorème~\ref{th:Quilleng} et du théorème d'existence des foncteurs dérivés (voir par exemple~\cite[Theorem~8.5.8]{Hi}). 
\end{proof}

\begin{paragr}\label{def:Hp}
Soient $X$ une $\infty$\nbd-catégorie et $M:X^\circ\to\Ab$ un \SLF{} sur~$X$. On définit l'\emph{homologie polygraphique $\Hp{n}(X,M)$ de $X$ à coefficients dans le \SLF} $M$ comme suit. On choisit une \emph{résolution polygraphique} $p:P\to X$ de $X$, autrement dit, une \emph{fibration triviale} de source un objet \emph{cofibrant} pour la structure de catégorie de modèles folk sur la catégorie des $\infty$-catégories strictes, et pour tout $n\geq0$, on pose $\Hp{n}(X,M)=\H_n(\C(P,p^*(M)))$. On rappelle que les objets cofibrants pour cette structure sont les polygraphes, d'où le nom d'homologie polygraphique. On remarque que si $M$ est le foncteur constant de valeur le groupe $\mathbb Z$, on retrouve la définition de l'homologie polygraphique d'une $\infty$\nbd-catégorie introduite par François Métayer dans~\cite{FrMet}. De même, on définit l'\emph{homologie polygraphique totale de $X$ à coefficients dans le \SLF{} $M$} comme étant l'objet $\HP(X,M)$ de la catégorie dérivée des groupes abéliens représenté par le complexe $\C(P,p^*(M))$.
\smallbreak

En vertu du corollaire \ref{cor:3.20} et de la construction du foncteur dérivé total à gauche d'un foncteur de Quillen à gauche~\cite[§8.5]{Hi},~\cite[§1.3]{Ho}, l'homologie polygraphique $\Hp{n}(X,M)$ et l'homologie polygraphique totale définies ci-dessus sont indépendantes, à isomorphisme canonique près, du choix de la résolution polygraphique $p:P\to M$. Plus précisément, en gardant les notations dudit corollaire, on a un isomorphisme canonique dans la catégorie dérivée des groupes abéliens $\HP(X,M)\simeq\LL \F_{X,M}(X,1_X)$, 
et en particulier pour $n\geq0$, un isomorphisme canonique de groupes abéliens $\Hp{n}(X,M)\simeq\H_n(\HP(X,M))$, où $\H_n:\Der(\Ab)\to\Ab$ désigne le foncteur homologie de degré $n$. On remarque que $\H_n(\HP(X,M))=0$, pour $n<0$.
\end{paragr}
\goodbreak

On va maintenant s'intéresser à la fonctorialité de l'homologie polygraphique $\HP(X,M)$ en $X$ et en $M$.

\begin{paragr}\label{fonctHpX}
Soient $f:X'\to X$ un $\infty$\nbd-foncteur et $U=U_f$ le foncteur \og d'oubli par $f$\fg
$$
U:\cm{\ooCat}{X'}\to\cm{\ooCat}{X}\,,\qquad(T,p:T\to X')\mapsto(T,fp:T\to X)
$$
entre les catégories $\cm{\ooCat}{X'}$ et $\cm{\ooCat}{X}$ munies chacune de la structure de catégorie de modèles induite par la structure folk sur $\ooCat$. En vertu de la définition des structures induites, ce foncteur respecte les équivalences faibles et les cofibrations et en particulier (comme il admet un adjoint à droite) c'est un foncteur de Quillen à gauche. Il admet donc un foncteur dérivé total à gauche $\LL U:\Ho(\cm{\ooCat}{X'})\to\Ho(\cm{\ooCat}{X})$ qui coïncide avec le foncteur $\bar U$ obtenu de $U$ par la propriété universelle de la localisation.
\smallbreak

Soit maintenant un \SLF{} $M$ sur $X$, et consdérons le triangle commutatif de foncteurs de Quillen à gauche
$$
\xymatrixcolsep{.2pc}
\xymatrix{
\cm{\ooCat}{X'}\ar[rr]^U\ar[dr]_{\F_{X'\kern -2pt,f^*(M)}}
&&\cm{\ooCat}{X}\ar[ld]^{\F_{X,M}}
\\
&\Comp(\Ab)
&.
}
$$
En vertu du théorème de Quillen de composition des foncteurs dérivés (voir par exemple~\cite[Theorem~1.3.7]{Ho}), on a un isomorphisme canonique de foncteurs
$$
\LL\F_{X'\kern -2pt,f^*(M)}\simeq\LL\F_{X,M}\circ\LL U=\LL\F_{X,M}\circ\bar U\,,
$$
et en particulier des isomorphismes canoniques dans la catégorie dérivée des groupes abéliens
$$
\HP(X'\kern -2pt,f^*(M))\simeq\LL\F_{X'\kern -2pt,f^*(M)}(X'\kern -2pt,1_{X'})\simeq\LL\F_{X,M}\circ\LL U(X'\kern -2pt,1_{X'})=\LL\F_{X,M}(X'\kern -2pt,f)\,.
$$
En composant avec le morphisme 
$$
\LL\F_{X,M}(X',f)\to\LL\F_{X,M}(X,1_X)\simeq\HP(X,M)
$$
image par le foncteur $\LL\F_{X,M}$ du morphisme $f:(X',f)\to(X,1_X)$ de $\cm{\ooCat}{X}$, on obtient un morphisme 
$$
\HP(f,M):\HP(X'\kern -2pt,f^*(M))\to\HP(X,M)\,\phantom{.} 
$$ 
dans $\Der(\Ab)$, et en particulier pour tout $n\geq0$, un morphisme de groupes abéliens
$$
\Hp{n}(f,M):\Hp{n}(X'\kern -2pt,f^*(M))\to\Hp{n}(X,M)\,. 
$$
On remarque que \smash{$\HP(1_X,M)=1_{\HP(X,M)}$}, et on laisse le soin au lecteur de vérifier que si $f':X''\to X'$ est un $\infty$\nbd-foncteur composable avec $f$, alors on a $$\HP(ff',M)=\HP(f,M)\circ\HP(f',f^*(M))\,.$$
\end{paragr}

\begin{rem}\label{retenir}
Ce qu'il faut retenir est que, dans les notations du paragraphe précédent, on a un isomorphisme canonique
$\HP(X'\kern -2pt,f^*(M))\simeq\LL\F_{X,M}(X',f)$ identifiant le morphisme $\HP(f,M):\HP(X'\kern -2pt,f^*(M))\to\HP(X,M)$ au morphisme
$$
\LL\F_{X,M}((X',f)\overset{f}{\to}(X,1_X))\,.
$$
\end{rem}

\begin{thm}\label{th:fond}
Soient $f:X'\to X$ un $\infty$\nbd-foncteur et $M$ un \SLF{} sur $X$. Si $f$ est une équivalence folk, alors $\HP(f,M):\HP(X'\kern -2pt,f^*(M))\to\HP(X,M)$ est un isomorphisme dans la catégorie dérivée des groupes abéliens, et en particulier, pour tout $n\geq0$, $\Hp{n}(f,M):\Hp{n}(X'\kern -2pt,f^*(M))\to\Hp{n}(X,M)$ est un isomorphisme de groupes abéliens.
\end{thm}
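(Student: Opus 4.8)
Le plan est de d\'eduire l'\'enonc\'e du th\'eor\`eme~\ref{th:Quilleng} de mani\`ere purement formelle, l'essentiel du travail ayant d\'ej\`a \'et\'e accompli dans la preuve de ce th\'eor\`eme.

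D'abord, je rappellerais, en vertu de la remarque~\ref{retenir} (et de la d\'efinition de $\HP(X,M)$ donn\'ee au paragraphe~\ref{def:Hp}), que le morphisme $\HP(f,M)$ s'identifie, via des isomorphismes canoniques, \`a l'image par le foncteur d\'eriv\'e total \`a gauche
$$
\LL\F_{X,M}:\Ho(\cm{\ooCat}{X})\longrightarrow\Der(\Ab)
$$
du corollaire~\ref{cor:3.20}, du morphisme $u:(X'\kern -2pt,f)\to(X,1_X)$ de la cat\'egorie $\cm{\ooCat}{X}$, dont le $\infty$\nbd-foncteur sous-jacent n'est autre que $f$.

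Ensuite, j'observerais que la structure de cat\'egorie de mod\`eles sur $\cm{\ooCat}{X}$ \'etant celle induite de la structure folk sur $\ooCat$, ses \'equivalences faibles sont exactement les morphismes dont le $\infty$\nbd-foncteur sous-jacent est une \'equivalence folk. Comme $f$ en est une par hypoth\`ese, $u$ est une \'equivalence faible de $\cm{\ooCat}{X}$, et son image dans la cat\'egorie homotopique $\Ho(\cm{\ooCat}{X})$ est donc un isomorphisme.

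Enfin, tout foncteur respectant les isomorphismes, $\LL\F_{X,M}(u)=\HP(f,M)$ est un isomorphisme de $\Der(\Ab)$; en lui appliquant les foncteurs homologie $\H_n:\Der(\Ab)\to\Ab$, qui respectent eux aussi les isomorphismes, on obtient que $\Hp{n}(f,M)=\H_n(\HP(f,M))$ est un isomorphisme de groupes ab\'eliens pour tout $n\geq0$. Je ne pr\'evois aucun obstacle s\'erieux dans cette d\'emarche: le seul point demandant un minimum de soin est l'identification fournie par la remarque~\ref{retenir}, d\'ej\`a acquise, et tout le reste est une cons\'equence formelle du th\'eor\`eme~\ref{th:Quilleng}.
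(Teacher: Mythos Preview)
Your proof is correct and follows essentially the same approach as the paper's own proof: both identify $\HP(f,M)$ with $\LL\F_{X,M}$ applied to the morphism $(X',f)\to(X,1_X)$ in $\cm{\ooCat}{X}$, and conclude from the fact that this morphism is a weak equivalence in the induced model structure. The paper's version is simply more terse.
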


\begin{proof}
En vertu de la définition du morphisme $\HP(f,M)$, il suffit de montrer que l'image par $\LL\F_{X,M}$ du morphisme $f:(X',f)\to(X,1_X)$ de $\cm{\ooCat}{X}$ est un isomorphisme, ce qui résulte du fait que, par hypothèse, $f:(X',f)\to(X,1_X)$ est une équivalence faible de $\cm{\ooCat}{X}$.
\end{proof}

\begin{paragr}\label{fonctHpM}
Soient $X$ une $\infty$\nbd-catégorie, $M$ et $M'$ deux \SLFS{} sur $X$, et $\varphi:M'\to M$ un morphisme de \SLFS{} sur $X$ (cf.~paragraphe~\ref{FonctCphi}).
Pour tout objet $(X',p:X'\to X)$ de $\cm{\ooCat}{X}$, on définit une transformation naturelle $p^*(\varphi):p^*(M')\to p^*(M)$, en posant $p^*(\varphi)=\varphi\kern 1pt*p^\circ$, et par suite, un morphisme de complexes $\C(X',p^*(\varphi)):\C(X',p^*(M'))\to\C(X',p^*(M))$. Il résulte aussitôt du paragraphe~\ref{CompFonctCfphi} qu'on obtient ainsi une transformation naturelle $\F_{X,\varphi}:\F_{X,M'}\to\F_{X,M}$, et par $2$\nbd-fonctorialité des foncteurs dérivés~\cite[§1.3]{Ho}, une transformation naturelle 
$$
\LL\F_{X,\varphi}:\LL\F_{X,M'}\to\LL\F_{X,M}\,,
$$
d'où, en évaluant à l'objet $(X,1_X)$ de $\cm{\ooCat}{X}$, un morphisme 
$$
\HP(X,\varphi):\HP(X,M')\to\HP(X,M)
$$
dans la catégorie dérivée des groupes abéliens, et en particulier, pour tout $n\geq0$, un morphisme de groupes abéliens
 $$
\Hp{n}(X,\varphi):\Hp{n}(X,M')\to\Hp{n}(X,M)\,.
$$
La $2$\nbd-fonctorialité des foncteurs dérivés implique aussitôt que \smash{$\HP\kern-.6pt(X,\kern -1.2pt1_M\kern-.6pt)=1_{\HP(X,M)}$} et que si $\varphi':M''\to M'$ est un morphisme de \SLFS{} sur $X\vrule height 11.5pt depth 2.2pt width0pt$, composable avec $\varphi$, alors on a $\HP(X,\varphi\varphi')=\HP(X,\varphi)\circ\HP(X,\varphi')$. De même, si $f:X'\to X$ est un $\infty$\nbd-foncteur, on a (cf.~paragraphe~\ref{CompFonctCfphi} et remarque~\ref{retenir}) un diagramme commutatif
$$\xymatrixcolsep{4.5pc}
\xymatrix{
&\HP(X'\kern-2pt,f^*(M'))\ar[r]^-{\HP(X'\kern-2pt,f^*(\varphi))\,}\ar[d]_{\HP(f,M')}
&\HP(X'\kern-2pt,f^*(M))\ar[d]^{\HP(f,M)}
\\
&\HP(X,M')\ar[r]_-{\HP(X,\varphi)}
&\HP(X,M)
&\kern-50pt,\kern50pt
}$$
où $f^*(\varphi):f^*(M')\to f^*(M)$ désigne le morphisme de \SLFS{} sur $X'$ défini par la transformation naturelle $\varphi*f^\circ$.
\end{paragr}

\begin{paragr}
Soit $(f,\varphi):(X',M')\to(X,M)$ un morphisme de \SLFS{} dans $\ooCat$ (cf.~paragraphe~\ref{DoubleFonct}). On définit un morphisme de la catégorie dérivée des groupes abéliens
$$
\HP(f,\varphi):\HP(X'\kern-2pt,M')\to\HP(X,M)
$$
en posant
$$
\HP(f,\varphi)=\HP(f,M)\circ\HP(X'\kern-2pt,\varphi)\,.
$$
La commutativité du carré ci-dessus implique aussitôt que l'association
$$
(X,M)\mapsto\HP(X,M)\,,\qquad(f,\varphi)\mapsto\HP(f,\varphi)
$$
définit un foncteur de la catégorie des \SLFS{} dans $\ooCat$ vers la catégorie dérivée des groupes abéliens.
\end{paragr}

\begin{prop}\label{prop:3.27}
Soient $f_0,f_1:X'\to X$ deux $\infty$\nbd-foncteurs, $\alpha:f_0\Rightarrow f_1$ une transformation oplax 
de $f_0$ vers $f_1$, et $M$ un \SLF{} sur $X$. On suppose que pour tout objet $x'$ de $X'$, le morphisme de groupes abéliens $\alpha^*_{x'}:=M_{{\alpha}_{x'}}:M_{f_1(x')}\to M_{f_0(x')}$ est un isomorphisme. Alors $\HP(f_0,M)$ est un isomorphisme dans la catégorie dérivée des groupes abéliens si et seulement si $\HP(f_1,M)$ l'est.
\end{prop}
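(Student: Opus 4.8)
The statement to prove (Proposition~\ref{prop:3.27}) says: given two $\infty$-functors $f_0,f_1:X'\to X$, an oplax transformation $\alpha:f_0\Rightarrow f_1$ with all $\alpha^*_{x'}$ invertible, and a weak local system $M$ on $X$, then $\HP(f_0,M)$ is an isomorphism in $\Der(\Ab)$ if and only if $\HP(f_1,M)$ is. This is the "derived" counterpart of Proposition~\ref{propqis}, which already established the analogous statement at the level of the chain complexes $\C(X',f_i^*(M))\to\C(X,M)$ — but only for a \emph{single} $\infty$-category $X'$, not after deriving. The natural strategy is therefore to reduce the derived statement to the complex-level statement by replacing $X'$ with a polygraphic resolution.

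\begin{proof}[Plan of proof]
First I would choose a polygraphic resolution $p:P\to X'$, i.e. a folk trivial fibration with $P$ cofibrant. By Theorem~\ref{th:fond} the morphism $\HP(p,f_i^*(M)):\HP(P,(f_ip)^*(M))\to\HP(X',f_i^*(M))$ is an isomorphism in $\Der(\Ab)$ for $i=0,1$ (one must use the functoriality identity $\HP(f_ip,M)=\HP(f_i,M)\circ\HP(p,f_i^*(M))$ from paragraph~\ref{fonctHpX}, together with $(f_ip)^*(M)=p^*(f_i^*(M))$). Hence $\HP(f_i,M)$ is an isomorphism if and only if $\HP(f_ip,M):\HP(P,(f_ip)^*(M))\to\HP(X,M)$ is. The transformation $\alpha$ pulls back along $p$ to an oplax transformation $\alpha p:=\alpha*p$ of $f_0p$ towards $f_1p$ (using the operation $\alpha*h$ recalled in the paragraph on transformations), and for each object $z$ of $P$ one has $\transf{(\alpha p)}{z}=\transf{\alpha}{p(z)}$, so $(\alpha p)^*_z=\alpha^*_{p(z)}$ is still an isomorphism. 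Thus it suffices to treat the case $X'=P$ a polygraph.

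Now, since $P$ is cofibrant, the canonical morphism $\C(P,(f_ip)^*(M))\to\HP(P,(f_ip)^*(M))$ (taking the identity resolution of $P$) is an isomorphism in $\Der(\Ab)$: this is exactly the statement that $\LL\F$ agrees with $\F$ on cofibrant objects, coming from the construction of the left derived functor of the Quillen functor $\F$ (Corollary~\ref{cor:3.20}), together with remark~\ref{retenir} identifying $\HP(f_ip,M)$ with $\LL\F_{X,M}((P,f_ip)\to(X,1_X))$. Under this identification, the morphism $\HP(f_ip,M)$ becomes precisely the chain map $\C(f_ip,M):\C(P,(f_ip)^*(M))\to\C(X,M)$ of paragraph~\ref{FonctCf}. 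So the derived statement for $(P,f_0p,f_1p,\alpha p)$ is literally the complex-level statement for that data, which is the conclusion of Proposition~\ref{propqis} (whose hypothesis on $\alpha p$ we just checked). Chasing back through the two reduction steps gives the equivalence for the original $(f_0,f_1,\alpha)$.

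The main obstacle is bookkeeping rather than any genuinely new idea: one must be careful that all the functoriality isomorphisms of paragraphs~\ref{fonctHpX} and~\ref{retenir} are compatible, i.e. that the square relating $\HP(f_ip,M)$, $\HP(p,f_i^*(M))$, $\HP(f_i,M)$ and the chain maps $\C(f_ip,M)$, $\C(p,f_i^*(M))$ actually commutes in $\Der(\Ab)$ — this is where one invokes the composition theorem for derived functors of Quillen functors. One should also verify the innocuous-looking claim $\transf{(\alpha*p)}{z}=\transf{\alpha}{p(z)}$ from the explicit description of $\alpha*h$ as $\alpha(h\otimes 1_{D_1})$; this is immediate from unwinding the product-of-Gray definition but deserves a sentence. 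Everything else is a direct appeal to Theorem~\ref{th:fond}, Corollary~\ref{cor:3.20}, and Proposition~\ref{propqis}.
\end{proof}
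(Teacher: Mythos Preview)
Your reduction to a cofibrant source is correct, but the step ``Under this identification, the morphism $\HP(f_ip,M)$ becomes precisely the chain map $\C(f_ip,M):\C(P,(f_ip)^*(M))\to\C(X,M)$'' is wrong, and this is where the proof breaks. The \emph{source} of $\HP(f_ip,M)$ is indeed identified with $\C(P,(f_ip)^*(M))$ since $(P,f_ip)$ is cofibrant in $\cm{\ooCat}{X}$, but the \emph{target} is $\HP(X,M)=\LL F_{X,M}(X,1_X)$, which is represented by $\C(Q,q^*(M))$ for a polygraphic resolution $q:Q\to X$ of $X$, not by $\C(X,M)$. Concretely, by the standard construction of the left derived functor, $\LL F_{X,M}\bigl((P,f_ip)\to(X,1_X)\bigr)$ is computed by choosing such a $q$, lifting $f_ip$ to some $g_i:P\to Q$ through the trivial fibration $q$, and taking the image of $\C(g_i,q^*(M)):\C(P,(f_ip)^*(M))\to\C(Q,q^*(M))$ in $\Der(\Ab)$. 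The naturality square $\LL F\Rightarrow F$ only relates $\HP(f_ip,M)$ and $\C(f_ip,M)$ via the comparison map $\HP(X,M)\to\C(X,M)$, which need not be invertible, so one cannot deduce that $\HP(f_ip,M)$ is an isomorphism from the quasi-isomorphism statement for $\C(f_ip,M)$.

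To repair the argument you must also resolve $X$. Once you have lifts $g_0,g_1:P\to Q$ as above, applying Proposition~\ref{propqis} to $(g_0,g_1)$ with the system $q^*(M)$ requires an oplax transformation $\alpha':g_0\Rightarrow g_1$ compatible with $\alpha*p$; its existence is precisely the content of Lemma~\ref{reltransf} (using that $P$ is a polygraph and $q$ a folk trivial fibration, one lifts $\alpha*p:qg_0\Rightarrow qg_1$ to $\alpha':g_0\Rightarrow g_1$ with $q*\alpha'=\alpha*p$). The hypothesis on $\alpha$ then transfers to $\alpha'$ since $(q*\alpha')_{z}=\alpha_{p(z)}$ for every object $z$ of $P$, and Proposition~\ref{propqis} finishes the job. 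This is exactly the route the paper takes: it resolves both $X$ and $X'$ from the outset, lifts $f_0,f_1$ to maps between the resolutions, and invokes Lemma~\ref{reltransf} to produce the lifted transformation before appealing to Proposition~\ref{propqis}. Your proposal was missing this second resolution and the crucial use of Lemma~\ref{reltransf}.
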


\begin{proof}
Choisissons des résolutions polygraphiques $p:P\to X$ et $p':P'\to X'$ de $X$ et $X'$ respectivement. Pour $\varepsilon=0,1$, le morphisme $\HP(f_\varepsilon,M)$ s'identifie, par définition (cf.~paragraphe~\ref{fonctHpX}) et construction des foncteurs dérivés, à l'image dans la catégorie dérivée des groupes abéliens du morphisme de complexes $\C(f'_\varepsilon,p^*(M))$, où $f'_\varepsilon:P'\to P$ désigne un relèvement de $f_\varepsilon$. En vertu du lemme~\ref{reltransf}, appliqué à la transformation $\alpha*p'$, il existe une transformation oplax $\alpha'$ de $f'_0$ vers~$f'_1$ telle que $p*\alpha'=\alpha*p'$. Or, l'hypothèse faite sur $\alpha$ implique que la transformation $\alpha'$ satisfait aux conditions de la proposition~\ref{propqis}. On en déduit que $\C(f'_0,M)$ est un quasi-isomorphisme si et seulement si $\C(f'_1,M)$ l'est, ce qui achève la démonstration.
\end{proof}

\begin{cor}\label{retris}
Soient $j:X'\to X$ un $\infty$\nbd-foncteur admettant une rétraction~$r$, et $M$ un \SLF{} sur $X$. On suppose qu'il existe une transformation oplax~$\alpha$ de $jr$ vers $1_X$ telle que pour tout objet $x$ de $X$, $\alpha^*_x:=M_{\alpha_x}:M_{jr(x)}\to M_x$ soit un isomorphisme. Alors les morphismes $\HP(j,M)$ et $\HP(r,j^*(M))$ de la catégorie dérivée des groupes abéliens sont des isomorphismes.
\end{cor}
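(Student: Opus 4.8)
The plan is to carry out, in the derived category of abelian groups, the exact analogue of the proof of Corollary~\ref{retrqis}, replacing Proposition~\ref{propqis} by its derived counterpart Proposition~\ref{prop:3.27} and chain-level quasi-isomorphisms by isomorphisms of $\Der(\Ab)$. First I would apply Proposition~\ref{prop:3.27} to the two $\infty$\nbd-functors $f_0=jr$ and $f_1=1_X$ from $X$ to $X$ and to the oplax transformation $\alpha:jr\Rightarrow 1_X$: the condition imposed on $\alpha$ in the statement is precisely the one required there (with the ``$X'$'' of that proposition being our $X$), and since $\HP(1_X,M)=1_{\HP(X,M)}$ is obviously an isomorphism, we conclude that $\HP(jr,M)$ is an isomorphism in $\Der(\Ab)$.

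Next I would invoke the functoriality of the total polygraphic homology recalled in paragraph~\ref{fonctHpX}. It gives, on the one hand,
$$
\HP(jr,M)=\HP(j,M)\circ\HP(r,j^*(M))\,,
$$
and, on the other hand, since $r$ is a retraction of $j$, that is $rj=1_{X'}$,
$$
\HP(r,j^*(M))\circ\HP(j,r^*j^*(M))=\HP(rj,j^*(M))=\HP(1_{X'},j^*(M))=1_{\HP(X',\,j^*(M))}\,,
$$
so that $\HP(r,j^*(M))$ admits a section in $\Der(\Ab)$. To finish I would use the purely formal fact, valid in any category, that a morphism $f$ admitting a section $s$ and such that $g\circ f$ is an isomorphism for some composable $g$ is itself an isomorphism: from $gfsf=gf$ and left-cancellability of the isomorphism $gf$ one gets $sf=1$, whence, together with $fs=1$, the invertibility of $f$. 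Applied to $f=\HP(r,j^*(M))$ and $g=\HP(j,M)$, whose composite $\HP(jr,M)$ is an isomorphism by the first step, this shows that $\HP(r,j^*(M))$ is an isomorphism, and then $\HP(j,M)=\HP(jr,M)\circ\HP(r,j^*(M))^{-1}$ is one as well.

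I do not expect any genuine obstacle: once Proposition~\ref{prop:3.27} and the functoriality of $\HP(-,-)$ in both variables are available, the argument is entirely formal, just as in the proof of Corollary~\ref{retrqis}. The only points requiring a little care are routine bookkeeping ones --- writing $jr$ as ``first $r$, then $j$'' so that $\HP(jr,M)=\HP(j,M)\circ\HP(r,j^*(M))$, keeping track of which weak local system lives on which $\infty$\nbd-category (in particular $(jr)^*(M)=r^*j^*(M)$), and noting that the cancellation lemma used at the end requires only left-cancellability of an isomorphism, hence holds in the triangulated category $\Der(\Ab)$ even though it is not abelian.
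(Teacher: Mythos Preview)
Your proposal is correct and follows essentially the same route as the paper's proof: apply Proposition~\ref{prop:3.27} to $jr$ and $1_X$ to see that $\HP(jr,M)$ is an isomorphism, use the functoriality relations $\HP(jr,M)=\HP(j,M)\circ\HP(r,j^*(M))$ and $\HP(r,j^*(M))\circ\HP(j,r^*j^*(M))=1$, and then conclude by the elementary cancellation argument. The paper's proof is slightly terser (it does not spell out the last cancellation step), but the logic is identical.
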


\begin{proof}
En vertu de la proposition \ref{prop:3.27}, l'hypothèse du corollaire implique que le morphisme $\HP(jr,M)=\HP(j,M)\circ\HP(r,j^*(M))$ est un isomorphisme. Or, comme $r$ est une rétraction de $j$, on a l'égalité 
$$1_{\HP(X',j^*(M))}=\HP(r,j^*(M))\circ\HP(j,r^*j^*(M))\,.$$ 
On en déduit que $\HP(r,j^*(M))$ est un isomorphisme, donc $\HP(j,M)$ aussi.
\end{proof}

\begin{rem}\label{remastbis}
L'hypothèse sur la transformation $\alpha$ dans la proposition \ref{prop:3.27} et dans le corollaire~\ref{retris} est satisfaite par exemple si $M$ est un \SL, et pas seulement un \SLF.
\end{rem}

Dans le cas où $X$ est un polygraphe, on dispose d'une description plus précise du complexe $\C(X,M)$ calculant alors son homologie polygraphique à coefficients dans un \SLF{} $M$.

\begin{prop}\label{CasPol}
Soient $X$ une $\infty$\nbd-catégorie librement engendrée au sens des polygraphes par l'ensemble de cellules $B$, et $M$ un \SLF{} sur $X$. Alors pour tout $n\geq0$, le composé
$$
\textstyle\bigoplus\limits_{b\in B_n}M_{t_0b}\hookrightarrow\textstyle\bigoplus\limits_{x\in X_n}M_{t_0x}\to\C_n(X,M)\,,
$$
de l'inclusion canonique suivie par la surjection canonique, est un isomorphisme.
\end{prop}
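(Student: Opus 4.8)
Le plan repose sur le fait que le foncteur $\F_{X,M}$ commute aux limites inductives (lemme~\ref{lemme:3.17}): on l'applique \`a la filtration qui d\'efinit le polygraphe $X$. Fixons $n\geq0$. Par d\'efinition d'un polygraphe, $X$ est la limite inductive dans $\ooCat$ de ses squelettes $X_{\leq k}$, donc aussi dans $\cm{\ooCat}{X}$, les morphismes de transition \'etant les inclusions $\iota_k:X_{\leq k}\hookrightarrow X$; de plus, chaque $X_{\leq k}$ s'obtient de $X_{\leq k-1}$ par un carr\'e cocart\'esien le long de $\coprod_{b\in B_k}i_k:\coprod_{b\in B_k}S^{k-1}\to\coprod_{b\in B_k}D_k$, la $b$\nbd-i\`eme composante $p_b:D_k\to X_{\leq k}$ de la fl\`eche horizontale inf\'erieure envoyant la cellule principale $\zeta_k$ sur $b$. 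Puisque $\F_{X,M}$ admet un adjoint \`a droite, il transforme ces limites inductives en limites inductives de complexes de groupes ab\'eliens, que l'on calcule degr\'e par degr\'e; en degr\'e $n$, on obtient ainsi $\C_n(X,M)=\varinjlim_k\C_n(X_{\leq k},\iota_k^*(M))$, chacun de ces groupes s'ins\'erant dans un carr\'e cocart\'esien faisant intervenir les groupes $\C_n$ des sph\`eres $S^{k-1}$ et des disques $D_k$.

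On calcule \`a pr\'esent ces briques \'el\'ementaires en degr\'e $n$. D'apr\`es la remarque~\ref{remunit}, on a $(1_y,m)\sim_n0$ pour toute cellule unit\'e $1_y$ et tout \'el\'ement $m$, d'o\`u $\C_n(Y,N)=0$ lorsque $Y$ est une $(n-1)$\nbd-cat\'egorie et $N$ un \SLF{} sur $Y$ (toutes les $n$\nbd-cellules de $Y$ \'etant alors des unit\'es); en particulier $\C_n(X_{\leq k},\iota_k^*(M))=0$ pour $k<n$, et de m\^eme $\C_n(S^{n-1},N)=0$ et $\C_n(X_{\leq n-1},N)=0$. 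Par ailleurs, $D_n$ ne poss\`ede aucune composition non triviale, de sorte que la remarque~\ref{remunit} entra\^ine aussi $\C_n(D_n,N)\simeq N_{t_0\zeta_n}$, via $m\mapsto(\zeta_n,m)$, pour tout \SLF{} $N$ sur $D_n$. En reportant ces calculs dans le carr\'e cocart\'esien associ\'e \`a $k=n$, et en utilisant l'\'egalit\'e $p_b(t_0\zeta_n)=t_0b$, on obtient $\C_n(X_{\leq n},\iota_n^*(M))\simeq\bigoplus_{b\in B_n}M_{t_0b}$. Enfin, pour $k>n$, l'inclusion $S^{k-1}\hookrightarrow D_k$ est une bijection sur les cellules de dimension $\leq k-1$, donc en particulier sur celles de dimension $\leq n$, et respecte leurs compositions; par cons\'equent $\C_n(S^{k-1},N)\to\C_n(D_k,N)$ est un isomorphisme, et le carr\'e cocart\'esien donne $\C_n(X_{\leq k},\iota_k^*(M))\simeq\C_n(X_{\leq k-1},\iota_{k-1}^*(M))$. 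La limite inductive s\'equentielle $\varinjlim_k\C_n(X_{\leq k},\iota_k^*(M))$ se stabilise donc \`a partir du rang $k=n$, d'o\`u un isomorphisme $\bigoplus_{b\in B_n}M_{t_0b}\overset{\sim}{\to}\C_n(X,M)$; en d\'eroulant la construction, on le reconna\^it comme le compos\'e de l'\'enonc\'e, \`a savoir le morphisme induit sur chaque facteur $M_{t_0b}$ par $m\mapsto(b,m)$.

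Dans cette approche, il n'y a pas d'obstacle v\'eritable: le seul soin \`a prendre est de s'assurer que toute la construction a bien lieu dans $\cm{\ooCat}{X}$, afin de pouvoir lui appliquer le foncteur $\F_{X,M}$, ce qui est automatique puisque le foncteur d'oubli $\cm{\ooCat}{X}\to\ooCat$ cr\'ee les limites inductives, pourvu que l'on munisse les carr\'es cocart\'esiens d\'efinissant les $X_{\leq k}$ et le c\^one de la limite inductive de leurs fl\`eches structurales canoniques vers $X$. Le point demandant un minimum d'attention est l'isomorphisme $\C_n(S^{k-1},N)\simeq\C_n(D_k,N)$ pour $k>n$: il vient de ce que $S^{k-1}$ est le $(k-1)$\nbd-tronqu\'e b\^ete de $D_k$, si bien que ces deux $\infty$\nbd-cat\'egories ont exactement les m\^emes $i$\nbd-cellules et les m\^emes $i$\nbd-compositions pour $i\leq k-1$, donc en particulier pour $i\leq n$, et de ce que les relations $(*_i)$ d\'efinissant $\C_n$, les transports $t_1(x_1)^*$ compris, ne font intervenir que des cellules de dimension au plus $n$. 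Une variante plus \'el\'ementaire consisterait \`a construire directement l'inverse $\C_n(X,M)\to\bigoplus_{b\in B_n}M_{t_0b}$ \`a partir de la d\'ecomposition de toute $n$\nbd-cellule de $X$ en ind\'ecomposables; dans ce cas, la partie d\'elicate serait de v\'erifier sa compatibilit\'e aux relations $(*_i)$.
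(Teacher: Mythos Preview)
Ta preuve est correcte, mais elle emprunte une voie diff\'erente de celle de l'article. L'article utilise l'adjoint \`a droite $\G=\G_{X,M}$ de l'appendice~\ref{adj} de mani\`ere explicite: apr\`es troncation b\^ete au degr\'e $n$, il identifie $\Hom_\Ab(\C_n(X,M),N)$ \`a $\Hom_{\cm{\ooCat}{X}}(X,\G N[n])$, d\'ecrit concr\`etement $\G N[n]$ (m\^eme $(n-1)$\nbd-tronqu\'e b\^ete que $X$, $n$\nbd-cellules de la forme $(x,u_n)$ avec $u_n:M_{t_0x}\to N$), puis invoque la propri\'et\'e universelle des polygraphes pour conclure que les $\infty$\nbd-foncteurs $X\to\G N[n]$ au-dessus de $X$ correspondent exactement aux familles $(u_n:M_{t_0b}\to N)_{b\in B_n}$. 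Toi, tu utilises seulement l'\emph{existence} de cet adjoint \`a droite (lemme~\ref{lemme:3.17}) pour savoir que $\F_{X,M}$ commute aux limites inductives, et tu calcules directement $\C_n(X,M)$ en d\'ecoupant le polygraphe le long de sa filtration squelettique et de ses carr\'es cocart\'esiens d'attachement. Ton approche a l'avantage de ne pas n\'ecessiter la description explicite, assez lourde, de $\G$; celle de l'article est peut-\^etre plus conceptuelle, tout se ramenant \`a une seule v\'erification via la propri\'et\'e universelle des polygraphes. Les deux arguments exploitent au fond la m\^eme adjonction, mais par des c\^ot\'es oppos\'es.
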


\begin{proof}
L'assertion étant tautologique pour $n=0$, supposons que $n\geq1$.
Il suffit de montrer que pour tout groupe abélien $N$ l'application 
$$
\Hom_\Ab(\C_n(X,M),N)\longrightarrow\Hom_\Ab(\textstyle\bigoplus\limits_{b\in B_n}M_{t_0b},N)\simeq\textstyle\prod\limits_{b\in B_n}\Hom_\Ab(M_{t_0b},N)\,,
$$
induite par ce composé, est une bijection. Comme le $n$\nbd-tronqué bête d'un polygraphe est un polygraphe et que la formation du complexe $\C(X,M)$ commute aux troncations bêtes, on peut supposer que le polygraphe $X$ est une $n$\nbd-catégorie et que le complexe $\C(X,M)$ est nul en degré $>n$. On a alors une bijection canonique
$$
\Hom_\Ab(\C_n(X,M),N)\simeq\Hom_{\Comp(\Ab)}(\C(X,M),N[n])\,,
$$
où $N[n]$ désigne le complexe concentré en degré (homologique) $n$ de valeur $N$. Or, par adjonction, on a une bijection canonique
$$
\Hom_{\Comp(\Ab)}(\C(X,M),N[n])\simeq\Hom_{\cm{\ooCat}{X}}(X,\G N[n])\,,
$$
où $\G=\G_{X,M}$ désigne l'adjoint à droite décrit dans l'appendice~\ref{adj}. Dans ce cas particulier, $\G N[n]$ est une $n$\nbd-catégorie ayant même $(n-1)$\nbd-tronqué bête que $X$ et dont les $n$\nbd-cellules s'identifient aux couples $(x,u_n)$, avec $x$ $n$\nbd-cellule de $X$ et $u_n:M_{t_0x}\to N$. En vertu de la propriété universelle des polygraphes, se donner un $\infty$\nbd-foncteur de $X$ vers $\G N[n]$, induisant l'identité sur son $(n-1)$\nbd-tronqué bête, revient à se donner pour tout $b\in B_n$ une telle cellule $(x,u_n)$, de façon compatible aux sources et buts. Pour que ce $\infty$\nbd-foncteur soit au-dessus de $X$, il est \emph{nécessaire} que ce couple soit de la forme $(b,u_n)$, et alors la compatibilité aux sources et buts est automatique. Mais cette condition est également \emph{suffisante} puisqu'en composant ce $\infty$\nbd-foncteur avec la projection $\G N[n]\to X$ on obtient un $\infty$\nbd-foncteur qui induit l'identité sur le $(n-1)$\nbd-tronqué bête et sur les générateurs, ce qui achève la démonstration.
\end{proof}

\begin{rem}
La description de la différentielle en termes des générateurs du polygraphe n'est pas aussi simple que pour l'homologie polygraphique à coefficients constants. 
Dans la section suivante, on l'explicitera dans un cas particulier.
\end{rem}

\section{L'homologie polygraphique d'un système local simplicial}

\begin{paragr}
La théorie de l'homotopie des $\infty$\nbd-catégories strictes est basée sur le nerf de Street, généralisation du nerf usuel $1$\nbd-catégorique introduit par Grothendieck~\cite{Nerf}. Ross~Street a introduit dans~\cite{Or} un objet cosimplicial dans $\ooCat$, la catégorie cosimpliciale $\Or{}$ des orientaux. En basse dimension, on a:
$$\begin{aligned}
\smp{0}\mapsto\Or{0}&=\{0\}=\hbox{catégorie ponctuelle}\,,
\\
\smp{1}\mapsto\Or{1}&=\{0\longrightarrow1\}\,,
\\
\smp{2}\mapsto\Or{2}&=\left\{\raise 21pt\vbox{
\xymatrixrowsep{-.07pc}
\xymatrixcolsep{1.pc}
\xymatrix{
&1\ar[dddddr]
\\
\\
&
\\
&
\\
&\ar@{=>}[uu]^{}
\\
0\ar[uuuuur]^{}\ar[rr]_{}
&\vrule height 6pt width 0pt\ar@{=>}[uuu]^{}
&2
}
}\right\}\,,
\\
\smp{3}\mapsto\Or{3}&=\left\{\raise 30pt\vbox{
\UseTwocells
\xymatrixcolsep{.2pc}
\xymatrixrowsep{.0pc}
\xymatrix{
&&&1\ar[dddrrr]
&&&&&&&
&&&1\ar[dddrrr]\ar[dddddd]
\\&&&&&&&&&&
\\&&
&&\lltwocell<>
&&&&&&
&&&&&&
\\
0\ar[dddrrr]\ar[uuurrr]\ar[rrrrrr]
&&&
&&&2\ar[dddlll]
&\ar@3{->}[rr]
&&&
0\ar[dddrrr]\ar[uuurrr]
&&&&&&2\ar[dddlll]
\\&&&&&&&&&&
&&&\uulltwocell<>
&\uutwocell<>
\\
&&&\uultwocell<>
&&&
\\
&&&3
&&&&&&&
&&&3
}
}\right\}\,.
\end{aligned}$$
Ce qu'il faut retenir, dans le cas général, est que $\Or n$ est un $n$\nbd-polygraphe dont les $i$\nbd-cellules génératrices sont en bijection avec les $i$\nbd-simplexes non dégénérés de $\smp n$, la source (resp. le but) d'une telle cellule étant un composé des $(i-1)$-cellules correspondant aux faces impaires (resp.~paires) du $i$\nbd-simplexe. En particulier, $\Or n$ a $n+1$ objets notés $0,1,\dots,n$ et une unique $n$\nbd-cellule non triviale $\xi_n$, qu'on appellera sa cellule principale. Pour $0\leq j_0<j_1<\cdots<j_i\leq n$, on notera $(j_0j_1\dots j_i)$ à la fois le $i$-simplexe de $\smp n$ défini par l'application $k\mapsto j_k$ est la $i$\nbd-cellule correspondante de $\Or n$. 
Avec cette notation, on a donc $\xi_n=(01\dots n)$. La $0$\nbd-source de $\xi_n$ est $0$ et son $0$\nbd-but est $n$.
On aura besoin du lemme suivant.
\end{paragr}

\begin{lemme}\label{LemmeOr}
Pour $n\geq1$ pair, respectivement impair, la source et le but de la cellule principale de $\Or n$ sont des composés
$$
s_{n-1}\xi_n=A_{n-1}*_{n-2}\cdots*_{n-2}A_3*_{n-2}A_1\,,\quad t_{n-1}\xi_n=A_{0}*_{n-2}A_2*_{n-2}\cdots*_{n-2}A_n\,,
$$
respectivement
$$
s_{n-1}\xi_n=A_{n}*_{n-2}\cdots*_{n-2}A_3*_{n-2}A_1\,,\qquad t_{n-1}\xi_n=A_{0}*_{n-2}A_2*_{n-2}\cdots*_{n-2}A_{n-1}\,,
$$
où pour tout $i$, $A_i$ est une $(n-1)$\nbd-cellule de $\Or n$ de la forme
$$
A_i=L_{n-2}*_{n-3}\cdots*_2L_2*_1L_1*_0(0\dots\widehat{i}\dots n)*_0R_1*_1R_2*_2\dots*_{n-3}R_{n-2}\,,
$$
$L_k,R_k$ étant des $k$\nbd-cellules de $\Or n$, pour $1\leq k\leq n-2$. 
De plus, si $0<i<n$, $L_1$~et~$R_1$ sont des unités, si $i=0$, $L_1$ est une unité et $R_1=(01)$, et si $i=n$, $R_1$ est une unité et $L_1=(n-1,n)$. 
\end{lemme}

\begin{proof}
Voir~\cite{Street} ou \cite[section 4.3]{Felix}. La dernière assertion résulte du fait qu'il n'existe pas de cellule non triviale dans $\Or n$ de $0$\nbd-but $0$ ou de $0$\nbd-source $n$, et du fait que la seule $1$\nbd-cellule de source $0$ (resp. $n-1$) et de but $1$ (resp. $n$) est $(01)$ (resp.~$(n-1,n)$).
\end{proof}

\begin{rem}
En particulier, dans les notations du lemme, pour $n=1$, on a
$$
s_0(\xi_1)=A_1=(0\widehat1)=0\,,\quad t_0(\xi_1)=A_0=(\widehat01)=1\,,
$$
pour $n=2$,
$$
s_1(\xi_2)=A_1=(0\widehat12)=(02)\,,\quad t_1(\xi_2)=A_0*_0A_2=(\widehat012)*_0(01\widehat2)=(12)*_0(01)\,,
$$
et pour $n=3$,
$$\begin{aligned}
&s_2(\xi_3)=A_3*_1A_1=((23)*_0(012))*_1(023)\,,\cr
&t_2(\xi_3)=A_0*_1A_2=((123)*_0(01))*_1(013)\,.
\end{aligned}$$
Pour $n$ arbitraire, ce lemme ne donne pas une description complète de la source et du but de $\xi_n$, mais fournit des informations suffisantes pour ce qui suit.
\end{rem}

\begin{paragr}\label{NerfStreet}
La $\infty$\nbd-catégorie cosimpliciale $\Or{}:\Catsmp\to\ooCat$ définit par le procédé de Kan un couple de foncteurs adjoints
$$
c:\pref{\Catsmp}\longrightarrow\ooCat\,,\qquad N:\ooCat\longrightarrow\pref{\Catsmp}\,,
$$
où $c$ est le prolongement du foncteur $\Or{}$ par limites inductives, et le \emph{nerf de Street} $N$ est défini par
$$
C\mapsto(\smp n\mapsto\Hom_{\ooCat}(\Or{n},C))\,.
$$
On remarque aussitôt que la restriction du foncteur $N$ à $\Cat$ est le foncteur nerf usuel et que le foncteur $c^{}_1$ associant à un ensemble simplicial sa catégorie fondamentale est le composé de $c$ avec le foncteur de troncation intelligente $\tauint{1}$, adjoint à gauche de l'inclusion de $\Cat$ dans $\ooCat$. Autrement dit, pour tout ensemble simplicial $X$, sa catégorie fondamentale est canoniquement isomorphe à la catégorie fondamentale de la $\infty$\nbd-catégorie $c(X)$, et par suite les groupoïdes fondamentaux de $X$ et de $c(X)$ sont aussi canoniquement isomorphes.
\end{paragr}

\begin{paragr}
Soient $X$ un ensemble simplicial et $M:c^{}_1(X)^\circ\to\Ab$ un \SLF{} sur $X$. En composant le foncteur $M$ avec le morphisme d'adjonction $c(X)\to\tauint{1}c(X)=c^{}_1(X)$, on obtient un \SLF{} sur $c(X)$ qu'on notera également $M:c(X)^\circ\to\Ab$. Réciproquement, si $M:c(X)^\circ\to\Ab$ est un \SLF{} sur la $\infty$\nbd-catégorie $c(X)$, comme $\Ab$ est une $(1\hbox{-})$catégorie le foncteur $M$ se factorise par $\tauint{1}c(X)=c^{}_1(X)$, et définit donc un \SLF{} sur l'ensemble simplicial $X$. Ainsi les \SLFS{} sur $X$ et sur $c(X)$ se correspondent bijectivement.
\end{paragr}

Le but de la suite de cette section est de montrer que si $X$ est un ensemble~\hbox{simplicial} et $M$ un \SLF{} sur $X$, pour tout $n\geq0$, le groupe d'\hbox{homologie}~$\H_n(X,M)$ est canoniquement isomorphe au groupe d'homologie polygraphique $\Hp{n}(c(X),M)$. Plus précisément, on va montrer que le complexe normalisé du groupe abélien simplicial $\C(X,M)$ est canoniquement isomorphe au complexe $\C(c(X),M)$. Comme la $\infty$\nbd-catégorie $c(X)$ est un polygraphe, on obtient donc un isomorphisme canonique $\HT(X,M)\simeq\HP(c(X),M)$ dans la catégorie dérivée des groupes abéliens. 

\begin{paragr}
Soient $X$ un ensemble simplicial et $M$ un \SLF{} sur $X$. On note $\N(X,M)$ le complexe normalisé associé au groupe abélien simplicial $\C(X,M)$. 
On rappelle que pour $n\geq0$,
$$
\N_n(X,M)=\textstyle\bigoplus\limits_{x\in X^\nd_n}M_{x_n}\,,\quad n\geq0\,,
$$
où $X^\nd_n$ désigne l'ensemble des $n$\nbd-simplexes non dégénérés de $X$, la différentielle étant définie par
$$\begin{aligned}
&d(x,m)=\textstyle\sum\limits_{0\leq i<n}(-1)^i(d_ix,m)+(-1)^n(d_nx,x^*_{n-1,n}(m))\,,\ x\in X^\nd_n,\, m\in M_{x_n},\, n>0\,,
\end{aligned}
$$
avec la convention que les termes où $d_ix$ est dégénéré sont nuls.
\smallbreak

Un $n$\nbd-simplexe $x$ de $X$ définit par Yoneda un morphisme d'ensembles simpliciaux $\Yon{x}:\smp n\to X$, d'où par application du foncteur $c$, un $\infty$\nbd-foncteur $c(\Yon{x}):\Or{n}\to c(X)$, et par suite, une $n$\nbd-cellule $c(x):=c(\Yon{x})(\xi_n)$ de $c(X)$, où $\xi_n$ est la cellule principale de~$\Or n$. Pour toute application croissante $\varphi:\smp{n_0}\to\smp{n_1}$ et tout $n_1$\nbd-simplexe $x_1$~de~$X$, si~$x_0=X_\varphi(x_1)$, de sorte que $\Yon{x}_0=\Yon{x_1}\varphi$, on a
$$
c(x_0)=c(\Yon{x}_0)(\xi_{n_0})=c(\Yon{x}_1\varphi)(\xi_{n_0})=c(\Yon{x}_1)(\Or{\varphi}(\xi_{n_0}))\,.
$$
En particulier, pour $\varphi$ un opérateur de face, on obtient que si $x$ est un $n$\nbd-simplexe et $0\leq i\leq n$, on a 
$$c(d_ix)=c(\Yon{x})(0\dots\widehat i\dots n)\,.$$
De même, pour $\varphi$ un opérateur de dégénérescence, on obtient que si $x$ est un $n$\nbd-simplexe dégénéré la $n$\nbd-cellule $c(x)(\xi_n)$ est une unité. Enfin, pour $\varphi$ l'application $\smp0\to\smp n$, $0\mapsto n$, on obtient $t_0c(x)=c(x_n)$. Ainsi, on définit un morphisme de groupes abéliens
$$\begin{aligned}
f_n:\N_n(X,M)&=\kern -5pt\textstyle\bigoplus\limits_{x\in X^\nd_n}\kern-3ptM_{x_n}\longrightarrow\kern -3pt\textstyle\bigoplus\limits_{y\in c(X)_n}\kern -5ptM_{t_0y}\Bigm/\sim_n\kern 3pt=\C_n(c(X),M)\cr
&\kern 18pt(x,m)\kern4pt\longmapsto\kern5pt (c(x),m)\ .
\end{aligned}$$
\end{paragr}

\begin{prop}\label{prop:4.7}
Le morphisme de groupes abéliens gradués $f=(f_n)_{n\geq0}$ est un isomorphisme de complexes. 
\end{prop}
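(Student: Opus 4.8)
Le plan est d'\'etablir s\'epar\'ement que chaque $f_n$ est bijectif et que $f$ commute aux diff\'erentielles. Pour la bijectivit\'e, on utilise le fait (classique, et sous-jacent \`a la structure polygraphique de $c(X)$) que $c(X)$ est librement engendr\'e au sens des polygraphes par l'ensemble $B$ des cellules $c(x)$, $x$ parcourant les simplexes non d\'eg\'en\'er\'es de $X$: en effet $\Or{}$ transforme toute d\'eg\'en\'erescence en un $\infty$\nbd-foncteur envoyant la cellule principale sur une unit\'e, de sorte que $c(x)$ est une unit\'e it\'er\'ee si $x$ est d\'eg\'en\'er\'e et une $n$\nbd-cellule ind\'ecomposable sinon, deux $n$\nbd-simplexes non d\'eg\'en\'er\'es distincts donnant des cellules distinctes. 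Comme $t_0c(x)=c(x_n)=x_n$ (on a $X_0=c(X)_0$), la bijection $x\mapsto c(x)$ de $X^\nd_n$ sur $B_n$ induit un isomorphisme de r\'eindexation $\bigoplus_{x\in X^\nd_n}M_{x_n}\simeq\bigoplus_{b\in B_n}M_{t_0b}$, et $f_n$ est le compos\'e de ce dernier avec l'isomorphisme de la proposition~\ref{CasPol}, d'o\`u sa bijectivit\'e.

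Il reste \`a montrer que $d_n\circ f_n=f_{n-1}\circ d_n$ pour $n\geq1$, o\`u \`a gauche $d_n$ d\'esigne la diff\'erentielle de $\C(c(X),M)$. Le cas $n=1$ est imm\'ediat: $c(x)$ est alors une $1$\nbd-fl\`eche de source $x_0=c(d_1x)$, de but $x_1=c(d_0x)$ et d'action $c(x)^*=x^*$ (en vertu de l'isomorphisme canonique entre les cat\'egories fondamentales de $X$ et de $c(X)$, cf.~paragraphe~\ref{NerfStreet}), d'o\`u $d_1(c(x),m)=(x_1,m)-(x_0,x^*m)=f_0(d_1(x,m))$. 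Pour $n\geq2$, on a $d_n(c(x),m)=(t_{n-1}c(x),m)-(s_{n-1}c(x),m)$ avec $s_{n-1}c(x)=c(\Yon{x})(s_{n-1}\xi_n)$ et $t_{n-1}c(x)=c(\Yon{x})(t_{n-1}\xi_n)$, et l'essentiel du calcul est l'explicitation de ces cellules au moyen du lemme~\ref{LemmeOr}.

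Le lemme~\ref{LemmeOr} pr\'esente $s_{n-1}\xi_n$ et $t_{n-1}\xi_n$ comme compos\'es en direction $n-2$ des cellules $A_0,\dots,A_n$, chaque $A_i$ apparaissant dans $t_{n-1}\xi_n$ si $i$ est pair et dans $s_{n-1}\xi_n$ sinon. En appliquant $c(\Yon{x})$ puis les relations engendrant $\sim_{n-1}$ (paragraphe~\ref{relfond}), $d_n(c(x),m)$ devient une somme altern\'ee des $(c(\Yon{x})(A_i),m)$: le compos\'e en direction $n-2$ n'apporte aucune torsion quand $n\geq3$ (alors $n-2>0$), et quand $n=2$ ce compos\'e, en direction $0$, fait appara\^itre via $(*_0)$ la torsion $x^*_{1,2}$ sur le terme $A_2$. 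Il reste \`a simplifier chaque $c(\Yon{x})(A_i)$: d'apr\`es le lemme~\ref{LemmeOr}, $A_i$ est un compos\'e de la $(n-1)$\nbd-cellule $(0\dots\widehat i\dots n)$, d'image $c(d_ix)$ par $c(\Yon{x})$, avec des cellules $L_k,R_k$ de dimension $k<n-1$, lesquelles en vertu de la remarque~\ref{remunit} sont absorb\'ees dans $\C_{n-1}(c(X),M)$, \`a l'exception de $R_1=(01)$ lorsque $i=0$ et de $L_1=(n-1,n)$ lorsque $i=n$; comme $(01)$ est compos\'ee en direction $0$ \`a droite de la grande cellule, elle s'\'elimine sans torsion (remarque~\ref{remunit}), tandis que $(n-1,n)$, compos\'ee en direction $0$ \`a gauche de $(0\dots\widehat n\dots n)$ et d'image la $1$\nbd-fl\`eche $x_{n-1,n}$, donne $(c(\Yon{x})(A_n),m)\sim_{n-1}(c(d_nx),x^*_{n-1,n}m)$; et pour $0<i<n$, $L_1$ et $R_1$ \'etant des unit\'es, $(c(\Yon{x})(A_i),m)\sim_{n-1}(c(d_ix),m)$. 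Les termes o\`u $d_ix$ est d\'eg\'en\'er\'e s'annulant de part et d'autre (pour $c(X)$, $c(\Yon{x})$ envoyant la cellule correspondante sur une unit\'e it\'er\'ee, cf.~remarque~\ref{remunit}), on retrouve, en tenant compte de la parit\'e de $n$ pour les signes, exactement $\sum_{0\leq i<n}(-1)^i(c(d_ix),m)+(-1)^n(c(d_nx),x^*_{n-1,n}m)=f_{n-1}(d_n(x,m))$.

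Le point d\'elicat est cette derni\`ere \'etape, le calcul des images de $s_{n-1}\xi_n$ et $t_{n-1}\xi_n$ dans $\C_{n-1}(c(X),M)$: il ne pr\'esente aucune difficult\'e conceptuelle --- tout d\'ecoule du lemme~\ref{LemmeOr} et de la remarque~\ref{remunit} --- mais demande de suivre soigneusement les signes, de distinguer les parit\'es de $n$ ainsi que les petits cas $n=1,2$, et surtout de v\'erifier que c'est bien la face $d_nx$, et elle seule, qui re\c{c}oit la torsion $x^*_{n-1,n}$, ce qui tient \`a la position de $L_1=(n-1,n)$ \`a gauche de $(0\dots\widehat n\dots n)$ dans la description de $A_n$.
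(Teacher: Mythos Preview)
Votre preuve est correcte et suit essentiellement la m\^eme d\'emarche que celle de l'article: la bijectivit\'e via la proposition~\ref{CasPol} et la structure polygraphique de $c(X)$, puis la compatibilit\'e aux diff\'erentielles par le lemme~\ref{LemmeOr} et la remarque~\ref{remunit}, en distinguant les petits cas $n=1,2$ du cas g\'en\'eral. L'article pr\'esente ces deux \'etapes dans l'ordre inverse et d\'etaille le calcul pour $n>2$ impair en \'ecrivant explicitement la somme altern\'ee des $(c(\Yon{x})(A_i),m)$, mais le contenu math\'ematique est identique.
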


\begin{proof}
Vérifions la compatibilité de $f$ aux différentielles. Soient donc $n\geq1$, $x$ un  $n$\nbd-simplexe de $X$, et $m\in M_{x_n}$. On distingue plusieurs cas.
\smallbreak

--- $n=1$. On a:
$$\begin{aligned}
df_1(x,m)&=d(c(x),m)=(t_0(c(x)),m)-(s_0(c(x)),c(x)^*m) \cr
&=(t_0(c(\Yon{x})(\xi_1)),m)-(s_0(c(\Yon{x})(\xi_1)),x^*m)\cr
&=(c(\Yon{x})(t_0\xi_1),m)-(c(\Yon{x})(s_0\xi_1),x^*m)\cr
&=(c(\Yon{x})(\widehat01),m)-(c(\Yon{x})(0\widehat1),x^*m)\cr
&=(c(d_0x),m)-(c(d_1x),x^*m)\cr
&=f_0(d_0x,m)-f_0(d_1x,x^*m)=f_0d(x)\,.
\end{aligned}$$ 

--- $n=2$. On a:
$$\begin{aligned}
df_2(x,m)&=d(c(x),m)=(t_1(c(x)),m)-(s_1(c(x)),m) \cr
&=(t_1(c(\Yon{x})(\xi_2)),m)-(s_1(c(\Yon{x})(\xi_2)),m)\cr
&=(c(\Yon{x})(t_1\xi_2),m)-(c(\Yon{x})(s_1\xi_2),m)\cr
&=(c(\Yon{x})((\widehat012)*_0(01\widehat2)),m)-(c(\Yon{x})(0\widehat12),m)\cr
&=(c(\Yon{x})(\widehat012)*_0c(\Yon{x})(01\widehat2),m)-(c(\Yon{x})(0\widehat12),m)\cr
&\sim_1(c(\Yon{x})(\widehat012),m)+(c(\Yon{x})(01\widehat2),c(\Yon{x})(\widehat012)^*(m))-(c(\Yon{x})(0\widehat12),m)\cr
&=(c(\Yon{x})(\widehat012),m)-(c(\Yon{x})(0\widehat12),m)+(c(\Yon{x})(01\widehat2),x_{12}^*(m))\cr
&=(c(d_0x),m)-(c(d_1x),m)+(c(d_2x),x_{12}^*(m))\cr
&=f_1(d_0x,m)-f_1(d_1x,m)+f_1(d_2x,x_{12}^*(m))=f_1d(x)\,.
\end{aligned}$$
\smallbreak

--- $n>2$, $n$ impair. En vertu du lemme~\ref{LemmeOr}, et dans ses notations, on a:
$$\begin{aligned}
df_n(x,m)&=d(c(x),m)=(t_{n-1}(c(x)),m)-(s_{n-1}(c(x)),m) \cr
&=(t_{n-1}(c(\Yon{x})(\xi_n)),m)-(s_{n-1}(c(\Yon{x})(\xi_n)),m)\cr
&=(c(\Yon{x})(t_{n-1}\xi_n),m)-(c(\Yon{x})(s_{n-1}\xi_n),m)\cr
&=(c(\Yon{x})(A_{0}*_{n-2}A_{2}*_{n-2}\cdots*_{n-2}A_{n-1}),m)\cr
&\qquad-(c(\Yon{x})(A_{n}*_{n-2}\cdots*_{n-2}A_{3}*_{n-2}A_1),m)\cr
&=(c(\Yon{x})(A_{0})*_{n-2}(c(\Yon{x})(A_{2})*_{n-2}\cdots*_{n-2}(c(\Yon{x})(A_{n-1}),m)\cr
&\qquad-(c(\Yon{x})(A_{n})*_{n-2}\cdots*_{n-2}(c(\Yon{x})(A_{3})*_{n-2}(c(\Yon{x})(A_1),m)\cr
&\sim_{n-1}(c(\Yon{x})(A_{0}),m)+(c(\Yon{x})(A_{2}),m)+\cdots+(c(\Yon{x})(A_{n-1}),m)\cr
&\qquad-(c(\Yon{x})(A_{n}),m)-\cdots-(c(\Yon{x})(A_{3}),m)-(c(\Yon{x})(A_1),m)\,,\cr
\end{aligned}$$
où
$$
A_i=L_{n-2}*_{n-3}\cdots*_1L_1*_0(0\dots\widehat{i}\dots n)*_0R_1*_1\dots*_{n-3}R_{n-2}\,,
$$
avec $L_k,R_k$ $k$-cellules de $\Or{n}$, et $L_1$ une unité si $i\neq n$ et $R_i$ une unité si $i\neq0$. En vertu de la remarque~\ref{remunit}, on a donc,
si $0<i<n$,
$$
(c(\Yon{x})(A_{i}),m)\sim_{n-1}(c(\Yon{x})(0\dots\widehat{i}\dots n),m)=(c(d_ix),m)\,,
$$
si $i=0$, 
$$\begin{aligned}
(c(\Yon{x})(A_{0}),m)&\sim_{n-1}(c(\Yon{x})((\widehat{0}1\dots n)*_0R_1),m)=(c(\Yon{x})(\widehat{0}1\dots n)*_0c(\Yon{x})(R_1),m)\cr
&\sim_{n-1}(c(\Yon{x})(\widehat{0}1\dots n),m)=(c(d_0x),m)\,,
\end{aligned}$$
et si $i=n$, 
$$\begin{aligned}
(c(\Yon{x})(A_{n}),m)&\sim_{n-1}(c(\Yon{x})(L_1*_0(01\dots\widehat{n})),m)=(c(\Yon{x})(L_1)*_0c(\Yon{x})(01\dots\widehat{n}),m)\cr
&\sim_{n-1}(c(\Yon{x})(01\dots\widehat{n}),c(\Yon{x})(L_1)^*(m))=(c(d_nx),x_{n-1,n}^*(m))\,
\end{aligned}$$
(en se souvenant qu'en vertu du lemme~\ref{LemmeOr}, $L_1=(n-1,n)$). On en déduit que $df_n(x)\sim_{n-1}f_{n-1}d(x)$, ce qui achève la vérification de la compatibilité de $f$ aux différentielles (le cas $n$ pair étant tout à fait similaire).
\smallbreak

Vu que $c(X)$ est un polygraphe de base l'ensemble des $c(x)$, pour $x$ simplexe non dégénéré de $X$, la proposition résulte alors de la proposition~\ref{CasPol}.
\end{proof}

\begin{thm}\label{thm:compc}
Soient $X$ un ensemble simplicial et $M$ un système local faible sur~$X$. Alors, on a un isomorphisme canonique $\HT(X,M)\simeq\HP(c(X),M)$ dans la catégorie dérivée des groupes abéliens, et en particulier, pour tout $n\geq0$, un isomorphisme de groupes abéliens $\H_{n}(X,M)\simeq\Hp{n}(c(X),M)$.
\end{thm}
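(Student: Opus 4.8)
The plan is to deduce the statement almost formally from Proposition~\ref{prop:4.7}, once one records that $c(X)$ is a polygraph, so that no further computation is needed. The first step is to recall that $\H_n(X,M)$ is by definition the homology of the complex $\C(X,M)$ associated with the simplicial abelian group of the same name, and that, as already noted in paragraph~\ref{def:homsimpl}, it is equally the homology of the normalized complex $\N(X,M)$; more precisely, the inclusion $\N(X,M)\hookrightarrow\C(X,M)$ is a chain homotopy equivalence (Dold--Kan), so the object of $\Der(\Ab)$ represented by $\C(X,M)$ --- which is what $\HT(X,M)$ denotes --- is canonically isomorphic to the object represented by $\N(X,M)$.

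The second step is to invoke Proposition~\ref{prop:4.7}, which gives a canonical isomorphism of complexes $f\colon\N(X,M)\xrightarrow{\sim}\C(c(X),M)$, and then to identify the object of $\Der(\Ab)$ represented by $\C(c(X),M)$ with $\HP(c(X),M)$. Here I would use that $c(X)$ is a polygraph: since $c$ is a left adjoint it commutes with colimits, so the skeletal filtration of $X$ exhibits $c(X)$ as an iterated pushout in $\ooCat$ in which $c(\operatorname{sk}_n X)$ is obtained from $c(\operatorname{sk}_{n-1}X)$ by attaching one $n$-cell for each nondegenerate $n$-simplex of $X$ (the attachment of a copy of $\Or{n}$ being itself the attachment of its principal cell to the sub-polygraph of $\Or{n}$ generated by the remaining cells, which is $c$ of the boundary of $\smp{n}$). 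Hence $c(X)$ is cofibrant for the folk model structure, the identity $\mathrm{id}\colon c(X)\to c(X)$ is a polygraphic resolution of $c(X)$, and by the definition of polygraphic homology (paragraph~\ref{def:Hp}) the object $\HP(c(X),M)$ is represented by $\C(c(X),\mathrm{id}^*(M))=\C(c(X),M)$.

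The last step is to chain the identifications: $\HT(X,M)$ is represented by $\C(X,M)$, hence by $\N(X,M)$, hence via $f$ by $\C(c(X),M)$, which represents $\HP(c(X),M)$; this yields the desired canonical isomorphism $\HT(X,M)\simeq\HP(c(X),M)$ in $\Der(\Ab)$, and applying the homology functors $\H_n\colon\Der(\Ab)\to\Ab$ gives $\H_n(X,M)\simeq\Hp{n}(c(X),M)$. I would also take a moment to record that every identification used is natural in $X$ and in $M$, since the normalization, the isomorphism $f$ of Proposition~\ref{prop:4.7}, and the representing complexes are all functorial. The main (and essentially only) nontrivial ingredient is Proposition~\ref{prop:4.7} itself, which is already established; the one remaining point demanding a little care is the assertion that $c(X)$ is a polygraph, but this is classical and follows from the skeletal filtration argument sketched above together with the fact that each oriental $\Or{n}$ is a polygraph.
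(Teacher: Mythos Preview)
Your proposal is correct and follows essentially the same approach as the paper: the paper's proof is a single sentence invoking Proposition~\ref{prop:4.7} together with the fact that $c(X)$ is a polygraph, and you have simply unpacked those two ingredients and the chaining of identifications explicitly. The only minor discrepancy is terminological: the paper treats $\N(X,M)$ as the \emph{quotient} of $\C(X,M)$ by the degenerate subcomplex rather than as a subcomplex, but this is immaterial since both models are chain homotopy equivalent to $\C(X,M)$.
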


\begin{proof}
Comme la $\infty$\nbd-catégorie $c(X)$ est un polygraphe, le théorème est conséquence directe de la proposition \ref{prop:4.7}.
\end{proof}

\begin{paragr}\label{def:eqpol}
On dit qu'un $\infty$\nbd-foncteur $f:X'\to X$ est une \emph{équivalence polygraphique} si $f$ induit une équivalence des groupoïdes fondamentaux, et si pour tout \SL{}~$M$ sur $X$ et tout $n\geq0$, le morphisme de groupes abéliens 
$$\Hp{n}(f,M):\Hp n(X', f^*(M))\to\Hp n(X,M)$$
est un isomorphisme (ou de façon équivalente si
$$\HP(f,M):\HP(X', f^*(M))\to\HP(X,M)$$
est un isomorphisme dans la catégorie dérivée des groupes abéliens).
\end{paragr}

\begin{cor}\label{cor:compc}
Pour qu'un morphisme d'ensembles simpliciaux soit une équivalence faible simpliciale, il faut et il suffit que son image par le foncteur $c$, adjoint à gauche du nerf de Street, soit une équivalence polygraphique.
\end{cor}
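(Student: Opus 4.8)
Le plan est de combiner le crit\`ere de type Whitehead du th\'eor\`eme~\ref{thtopol} avec l'isomorphisme de complexes de la proposition~\ref{prop:4.7} (sous-jacent au th\'eor\`eme~\ref{thm:compc}). Soit $f:X'\to X$ un morphisme d'ensembles simpliciaux. On commence par rappeler deux faits de \og dictionnaire\fg{} d\'ej\`a \'etablis. D'une part, d'apr\`es le paragraphe~\ref{NerfStreet}, le groupo\"ide fondamental d'un ensemble simplicial $T$ est canoniquement isomorphe \`a celui de la $\infty$\nbd-cat\'egorie $c(T)$, naturellement en $T$; ainsi $\Pi_1(f)$ est une \'equivalence de groupo\"ides si et seulement si $c(f)$ induit une \'equivalence des groupo\"ides fondamentaux. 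D'autre part, d'apr\`es le paragraphe qui suit~\ref{NerfStreet}, les \SLS{} sur $X$ et sur $c(X)$ se correspondent bijectivement (la bijection, plus g\'en\'erale, entre \SLFS{} qui y est donn\'ee \'echangeant les \SLS, les groupo\"ides fondamentaux \'etant identifi\'es), cette correspondance \'etant compatible \`a la pr\'ecomposition par $f$ et par $c(f)$: si $M$ est un \SL{} sur $X$, le \SL{} sur $c(X')$ associ\'e \`a $f^*(M)$ n'est autre que $c(f)^*(M)$.

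On v\'erifie ensuite que les conditions homologiques co\"incident. Puisque $c(X')$ et $c(X)$ sont des polygraphes, ils sont leurs propres r\'esolutions polygraphiques, de sorte que, pour tout \SL{} $M$ sur $X$ --- vu aussi comme \SL{} sur $c(X)$ --- et tout $n\geq0$, on a $\Hp{n}(c(X),M)=\H_n(\C(c(X),M))$ et $\Hp{n}(c(X'),f^*(M))=\H_n(\C(c(X'),f^*(M)))$, le morphisme $\Hp{n}(c(f),M)$ \'etant celui induit en homologie par le morphisme de complexes $\C(c(f),M)$ (cf.~remarque~\ref{retenir}, $c(X')$ et $c(X)$ \'etant cofibrants). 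Or, l'isomorphisme de complexes de la proposition~\ref{prop:4.7}, $\N(T,M)\simeq\C(c(T),M)$, est naturel en $T$: le carr\'e reliant les isomorphismes correspondant \`a $(X',f^*(M))$ et \`a $(X,M)$ aux morphismes $\N(f,M)$ et $\C(c(f),M)$ est commutatif, car pour $x'$ un $n$\nbd-simplexe non d\'eg\'en\'er\'e de $X'$ on a $c(f)(c(x'))=c(f(x'))$, et lorsque $f(x')$ est d\'eg\'en\'er\'e la cellule $c(f(x'))$ est une unit\'e, d'o\`u $(c(f(x')),m)\sim_n0$ dans $\C_n(c(X),M)$ en vertu de la remarque~\ref{remunit}. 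On en d\'eduit que $\Hp{n}(c(f),M)$ s'identifie au morphisme induit en homologie par $\N(f,M)$, c'est-\`a-dire \`a $\H_n(f,M)$, le complexe normalis\'e $\N(X,M)$ calculant la m\^eme homologie que $\C(X,M)$.

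Il reste \`a assembler ces \'equivalences. D'apr\`es le th\'eor\`eme~\ref{thtopol} (\'equivalence des conditions (a) et (b)), $f$ est une \'equivalence faible simpliciale si et seulement si $\Pi_1(f)$ est une \'equivalence de groupo\"ides et $\H_n(f,M)$ est un isomorphisme pour tout \SL{} $M$ sur $X$ et tout $n\geq0$. D'apr\`es ce qui pr\'ec\`ede, ceci \'equivaut \`a ce que $c(f)$ induise une \'equivalence des groupo\"ides fondamentaux et \`a ce que $\Hp{n}(c(f),M)$ soit un isomorphisme pour tout \SL{} $M$ sur $c(X)$ et tout $n\geq0$, c'est-\`a-dire, en vertu de la d\'efinition~\ref{def:eqpol}, \`a ce que $c(f)$ soit une \'equivalence polygraphique.

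Aucune difficult\'e s\'erieuse n'est \`a pr\'evoir: tout se ram\`ene \`a des v\'erifications de routine une fois acquis les th\'eor\`emes~\ref{thtopol} et~\ref{thm:compc}. Le seul point demandant un peu de soin est la naturalit\'e de l'isomorphisme de comparaison --- notamment l'argument d'annulation traitant les simplexes qui deviennent d\'eg\'en\'er\'es (de mani\`ere \'equivalente, les cellules qui deviennent des unit\'es) sous l'action de $f$ ---, mais elle d\'ecoule imm\'ediatement de la formule explicite de la proposition~\ref{prop:4.7} et de la remarque~\ref{remunit}.
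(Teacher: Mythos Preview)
Your proposal is correct and follows essentially the same approach as the paper, which simply invokes the th\'eor\`emes~\ref{thtopol} and~\ref{thm:compc} together with the canonical isomorphism of fundamental groupoids from paragraph~\ref{NerfStreet}. You have made explicit the naturality of the comparison isomorphism of proposition~\ref{prop:4.7} (including the degenerate-simplex case via remark~\ref{remunit}), a point the paper leaves implicit in the word \og canonique\fg.
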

 
\begin{proof}
Le corollaire est conséquence immédiate des théorèmes~\ref{thtopol} et~\ref{thm:compc}, en se souvenant que pour tout ensemble simplicial $X$, les groupoïdes fondamentaux de la $\infty$\nbd-catégorie $c(X)$ et de l'ensemble simplicial $X$ sont canoniquement isomorphes (cf.~paragraphe~\ref{NerfStreet}).
\end{proof}

\begin{prop}\label{prop:2sur3}
Si dans un triangle commutatif de $\ooCat$ deux parmi les trois flèches sont des équivalences polygraphiques, il en est de même de la troisième.
\end{prop}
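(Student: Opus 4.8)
Le plan est de v\'erifier s\'epar\'ement les deux conditions intervenant dans la d\'efinition d'une \'equivalence polygraphique (paragraphe~\ref{def:eqpol}). La condition portant sur les groupo\"{\i}des fondamentaux est imm\'ediate: $\Pi_1$ \'etant un foncteur (cf.~paragraphe~\ref{ooCat}) et la classe des \'equivalences de groupo\"{\i}des satisfaisant au 2 sur 3, il en r\'esulte que si deux des trois fl\`eches du triangle induisent des \'equivalences des groupo\"{\i}des fondamentaux, il en est de m\^eme de la troisi\`eme, et ceci dans chacun des trois cas.

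Il reste \`a traiter la condition homologique. Quitte \`a renommer les sommets, \'ecrivons le triangle sous la forme $X_0\xrightarrow{f}X_1\xrightarrow{g}X_2$, la troisi\`eme fl\`eche \'etant $h=gf:X_0\to X_2$. On rappelle (cf.~paragraphe~\ref{fonctHpX}) qu'on a, pour tout \SLF{} $M$ sur $X_2$, la factorisation
$$
\HP(h,M)=\HP(g,M)\circ\HP(f,g^*(M))\,,
$$
et on observe que si $M$ est un \SL{} sur $X_2$, alors $g^*(M)$ est un \SL{} sur $X_1$ (pour toute $1$\nbd-fl\`eche $x'$ de $X_1$, le morphisme de groupes ab\'eliens $(g^*M)_{x'}=M_{g(x')}$ est un isomorphisme). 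Les deux cas \og $f$ et $g$ sont des \'equivalences polygraphiques\fg{} et \og $f$ et $h$ sont des \'equivalences polygraphiques\fg{} se d\'eduisent alors aussit\^ot de cette factorisation: dans le premier, pour tout \SL{} $M$ sur $X_2$, les morphismes $\HP(g,M)$ et $\HP(f,g^*(M))$ sont des isomorphismes, donc leur compos\'e $\HP(h,M)$ aussi; dans le second, $\HP(h,M)$ et $\HP(f,g^*(M))$ sont des isomorphismes, donc $\HP(g,M)=\HP(h,M)\circ\HP(f,g^*(M))^{-1}$ aussi. Compte tenu de la premi\`ere \'etape, ceci montre que $h$, respectivement $g$, est une \'equivalence polygraphique.

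Le cas crucial, et le seul demandant un argument suppl\'ementaire, est \og $g$ et $h$ sont des \'equivalences polygraphiques, montrer que $f$ l'est\fg. La factorisation ci-dessus montre que, pour tout \SL{} $N$ sur $X_2$, le morphisme $\HP(f,g^*(N))=\HP(g,N)^{-1}\circ\HP(h,N)$ est un isomorphisme. Pour en d\'eduire que $\HP(f,M)$ est un isomorphisme pour un \SL{} \emph{arbitraire} $M$ sur $X_1$, on utilise que $\Pi_1(g)$ est une \'equivalence de groupo\"{\i}des: la cat\'egorie des \SLS{} sur une $\infty$\nbd-cat\'egorie $Y$ s'identifie canoniquement \`a $\mathrm{Fun}(\Pi_1(Y)^\circ,\Ab)$, et $g^*$ correspondant sous cette identification \`a la pr\'ecomposition par $\Pi_1(g)^\circ$, le foncteur $g^*$ induit une \'equivalence entre la cat\'egorie des \SLS{} sur $X_2$ et celle des \SLS{} sur $X_1$. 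Il existe donc un \SL{} $N$ sur $X_2$ et un isomorphisme de \SLS{} $M\simeq g^*(N)$ sur $X_1$. Appliqu\'e \`a cet isomorphisme, le carr\'e commutatif de la fin du paragraphe~\ref{fonctHpM} a ses deux fl\`eches horizontales des isomorphismes (les fonctorialit\'es en le \SLF{} du paragraphe~\ref{fonctHpM} impliquant que $\HP(-,-)$ envoie les isomorphismes de \SLFS{} sur des isomorphismes), de sorte que ses deux fl\`eches verticales $\HP(f,M)$ et $\HP(f,g^*(N))$ sont simultan\'ement des isomorphismes; la seconde en \'etant une, la premi\`ere aussi. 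L'obstacle principal est exactement cette derni\`ere \'etape: c'est l\`a qu'interviennent de fa\c{c}on essentielle \`a la fois l'hypoth\`ese sur les groupo\"{\i}des fondamentaux et la restriction aux \SLS{} (et non aux seuls \SLFS, pour lesquels l'homologie n'est pas invariante, cf.~la remarque suivant le th\'eor\`eme~\ref{thtopol}).
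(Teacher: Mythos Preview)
Your proof is correct and follows essentially the same approach as the paper's: the only non-trivial case is when $g$ and $gf$ are polygraphic equivalences, and you handle it exactly as the paper does, by using that $\Pi_1(g)$ is an equivalence to show that every \SL{} on $X_1$ is isomorphic to some $g^*(N)$, then invoking the commutative square of paragraphe~\ref{fonctHpM}. The only difference is cosmetic: the paper explicitly constructs $N$ as $M\circ I^\circ$ for $I$ a quasi-inverse of $\Pi_1(g)$, while you invoke essential surjectivity of $g^*$ abstractly.
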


\begin{proof}
Soient $f:X\to Y$ et $g:Y\to Z$ deux morphismes composables de $\ooCat$. La seule chose non triviale à montrer pour établir la proposition est que si~$g$ et $gf$ sont des équivalences polygraphiques, alors pour tout \SL{} $M$ sur~$Y$ et tout $n\geq0$, $\Hp n(f,M):\Hp n(X,f^*(M))\to\Hp n(Y,M)$ est un isomorphisme de groupes abéliens. Comme par hypothèse $\Pi_1(g)$ est une équivalence de groupoïdes, on peut en choisir un quasi-inverse $I:\Pi_1(Z)\to\Pi_1(Y)$. En se souvenant que la donnée d'un \SL{} sur une $\infty$\nbd-catégorie revient à la donnée d'un foncteur contravariant de son groupoïde fondamental vers la catégorie des groupes abéliens, le composé $M\circ I^\circ$ définit un système local $M'$ sur $Z$, et l'isomorphisme de foncteurs $I\circ\Pi_1(g)\overset{\sim}{\to}1_{\Pi_1(Y)}$ un isomorphisme $\varphi:g^*(M')\overset{\sim}{\to}M$ de \SLS{} sur $Y$. Or, comme $g$ et $gf$ sont des équivalences polygraphiques, l'égalité $\Hp n(gf,M')=\Hp n(g,M')\circ\Hp n(f,g^*(M'))$ (cf.~paragraphe~\ref{fonctHpX}) implique que $\Hp n(f,g^*(M'))$ est un isomorphisme, et le diagramme commutatif
$$\xymatrixcolsep{4.5pc}
\xymatrix{
\Hp{n}(X,f^*g^*(M'))\ar[r]^-{\Hp{n}(X,f^*(\varphi))\,}\ar[d]_{\Hp{n}(f,g^*(M'))}
&\Hp{n}(X,f^*(M))\ar[d]^{\Hp{n}(f,M)}
\\
\Hp{n}(Y,g^*(M'))\ar[r]_-{\Hp{n}(Y,\varphi)}
&\Hp{n}(Y,M)
}$$
(cf.~paragraphe~\ref{fonctHpM}), dont les lignes sont des isomorphismes, qu'il en est de même de $\Hp{n}(f,M)$.
\end{proof}

\begin{prop}\label{satfaible}
Soit $j:X'\to X$ un $\infty$\nbd-foncteur. On suppose que $j$ admet une rétraction $r$ et que $jr$ est une équivalence polygraphique. Alors $j$ est une équivalence polygraphique.
\end{prop}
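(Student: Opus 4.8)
\emph{Plan de preuve.} L'id\'ee est d'exhiber le morphisme $j$ comme un \emph{r\'etracte} du morphisme $jr$, puis de transporter ce r\'etracte via les foncteurs $\Pi_1$ et $\HP$, en utilisant que les isomorphismes (dans n'importe quelle cat\'egorie) et les \'equivalences de groupo\"ides sont stables par r\'etractes. Comme $r$ est une r\'etraction de $j$, on a $rj=1_{X'}$, donc aussi $jrj=j(rj)=j$, et le diagramme
$$
\xymatrix{
X'\ar[d]_{j}\ar[r]^{j} &X\ar[d]|{jr}\ar[r]^{r} &X'\ar[d]^{j}\\
X\ar@{=}[r] &X\ar@{=}[r] &X
}
$$
est commutatif, ses lignes ayant pour composante $rj=1_{X'}$ (en haut) et $1_X$ (en bas). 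Ainsi $j$ est un r\'etracte de $jr$ dans la cat\'egorie des morphismes de $\ooCat$.

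En appliquant le foncteur $\Pi_1$, qui va de $\ooCat$ vers la cat\'egorie des groupo\"ides, on obtient que $\Pi_1(j)$ est un r\'etracte de $\Pi_1(jr)$. Comme par hypoth\`ese $jr$ est une \'equivalence polygraphique, $\Pi_1(jr)$ est une \'equivalence de groupo\"ides; la classe de ces derni\`eres \'etant stable par r\'etractes (c'est la classe des \'equivalences faibles d'une structure de cat\'egorie de mod\`eles sur les groupo\"ides, ou bien par un argument \'el\'ementaire direct), $\Pi_1(j)$ est une \'equivalence de groupo\"ides.

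Il reste \`a montrer que pour tout \SL{} $M$ sur $X$ et tout $n\geq0$, le morphisme $\Hp{n}(j,M)$ est un isomorphisme, ou de fa\c con \'equivalente que $\HP(j,M)$ est un isomorphisme dans $\Der(\Ab)$. On rel\`eve le r\'etracte pr\'ec\'edent \`a la cat\'egorie des \SLFS{} dans $\ooCat$ (paragraphe~\ref{DoubleFonct}): on munit l'objet $j$ du morphisme de \SLFS{} $(j,1_{j^*(M)}):(X',j^*(M))\to(X,M)$ et l'objet $jr$ du morphisme $(jr,1_{(jr)^*(M)}):(X,(jr)^*(M))\to(X,M)$; en utilisant les \'egalit\'es $r^*(j^*(M))=(jr)^*(M)$ et $j^*((jr)^*(M))=(jrj)^*(M)=j^*(M)$, on v\'erifie que les couples $(j,1)$ et $(r,1)$ d\'efinissent des morphismes r\'ealisant $(j,1_{j^*(M)})$ comme un r\'etracte de $(jr,1_{(jr)^*(M)})$ dans la cat\'egorie des morphismes de la cat\'egorie des \SLFS{} dans $\ooCat$. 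Cette v\'erification, fastidieuse mais routini\`ere au vu des formules de composition du paragraphe~\ref{DoubleFonct}, est le point principal de la preuve. En appliquant alors le foncteur $(X,M)\mapsto\HP(X,M)$ de la cat\'egorie des \SLFS{} dans $\ooCat$ vers $\Der(\Ab)$, et en se souvenant que $\HP(f,1_{f^*(M)})=\HP(f,M)$, on en d\'eduit que $\HP(j,M)$ est un r\'etracte de $\HP(jr,M)$. Or, $jr$ \'etant une \'equivalence polygraphique et $M$ un \SL, $\HP(jr,M)$ est un isomorphisme dans $\Der(\Ab)$; comme un r\'etracte d'un isomorphisme en est un, $\HP(j,M)$ est un isomorphisme, d'o\`u, en prenant l'homologie en chaque degr\'e, $\Hp{n}(j,M)$ est un isomorphisme pour tout $n\geq0$. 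Joint au paragraphe pr\'ec\'edent, ceci ach\`eve de montrer que $j$ est une \'equivalence polygraphique.
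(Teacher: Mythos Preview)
Your proof is correct. The paper's proof runs on the very same underlying identity but unpacks it directly rather than packaging it as a retract: from the functoriality relations of paragraph~\ref{fonctHpX} it writes
\[
\HP(jr,M)=\HP(j,M)\circ\HP(r,j^*(M))\,,\qquad
\HP(r,j^*(M))\circ\HP(j,r^*j^*(M))=\HP(rj,j^*(M))=1\,,
\]
so $\HP(r,j^*(M))$ is a split epimorphism which is a right factor of an isomorphism, hence an isomorphism, and then $\HP(j,M)$ is an isomorphism; the argument for $\Pi_1$ is identical. Your route---exhibit $j$ as a retract of $jr$ in the arrow category of the category of \SLFS{} in $\ooCat$ and transport via the functor $\HP$---is the same computation seen from above: the two squares of your retract diagram, once pushed through $\HP$, are precisely the two displayed identities above. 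Your version has the pleasant feature of treating $\Pi_1$ and $\HP$ uniformly via the single principle ``retracts of isomorphisms (resp.\ equivalences) are isomorphisms (resp.\ equivalences)''; the paper's version is slightly more economical in that it never needs to name the retract in the category of \SLFS{} explicitly, only the two functoriality equations.
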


\begin{proof}
L'hypothèse que $r$ est une rétraction de $j$ implique que $\Pi_1(r)\Pi_1(j)=1_{\Pi_1(X')}$, et l'hypothèse que $jr$ est une équivalence polygraphique implique que $\Pi_1(j)\Pi_1(r)=\Pi_1(jr)$ est un isomorphisme. On en déduit que $\Pi_1(j)$ est un isomorphisme. De même, pour tout \SL{} $M$ sur $X$, le morphisme $\HT(jr,M)=\HT(j,M)\circ\HT(r,j^*(M))$ (cf.~paragraphe~\ref{fonctHpX}) est un isomorphisme de la catégorie dérivée des groupes abéliens et on a l'égalité 
$$
\HT(r,j^*(M))\circ\HT(j,r^*j^*(M))=\HT(rj,j^*(M))=1_{\HT(X'\kern-2pt,j^*(M))}\,.
$$
On en déduit que $\HT(r,j^*(M))$ est un isomorphisme, donc $\HT(j,M)$ aussi, ce qui achève la démonstration.
\end{proof}

\begin{rem}
Dans la terminologie de Grothendieck~\cite{PS1},~\cite{MalPRST},~\cite{Ci}, le contenu des propositions~\ref{prop:2sur3} et~\ref{satfaible} est que les équivalences polygraphiques forment une classe faiblement saturée de flèches de $\ooCat$.
\end{rem}

\begin{prop}\label{retropl}
Soit $j:X'\to X$ un $\infty$\nbd-foncteur. On suppose que $j$ admet une rétraction $r$ et qu'il existe une transformation oplax $\alpha$ de $jr$ vers $1_X$. Alors $j$ est une équivalence polygraphique.
\end{prop}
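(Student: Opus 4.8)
The plan is to reduce the statement to Proposition~\ref{satfaible}. Since $j$ admits the retraction $r$, it suffices to prove that the endofunctor $jr\colon X\to X$ is itself a polygraphic equivalence, and the oplax transformation $\alpha\colon jr\Rightarrow 1_X$ supplies exactly the data needed for that.

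First I would handle the fundamental groupoid. An oplax transformation from $f_0$ to $f_1$ assigns to each $1$-cell $u\colon x\to y$ a $2$-cell $\transf{\alpha}{u}$ of source $\transf{\alpha}{y}\comp{0}f_0(u)$ and target $f_1(u)\comp{0}\transf{\alpha}{x}$; passing to the fundamental category $c_1$, in which $1$-cells linked by a zigzag of $2$-cells become equal, this forces the corresponding square to commute, so $\alpha$ descends to a natural transformation $c_1(jr)\Rightarrow 1_{c_1(X)}$. Applying the enveloping groupoid functor turns it into a natural \emph{isomorphism} $\Pi_1(jr)=\Pi_1(j)\Pi_1(r)\overset{\sim}{\to}1_{\Pi_1(X)}$; in particular $\Pi_1(jr)$ is an equivalence of groupoids (together with $\Pi_1(r)\Pi_1(j)=1_{\Pi_1(X')}$ this also shows directly that $\Pi_1(j)$ is an equivalence).

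Next, for the homological condition, I would fix a \SL{} $M$ on $X$. Since $M$ is a genuine \SL{} and not merely a \SLF, it sends every $1$-cell of $X$ to an isomorphism, so $\alpha^*_x=M_{\transf{\alpha}{x}}\colon M_{jr(x)}\to M_x$ is an isomorphism for every object $x$ of $X$; this is precisely the hypothesis of Proposition~\ref{prop:3.27} (cf.~Remark~\ref{remastbis}). Applying that proposition to the pair of $\infty$-functors $jr,1_X$ and to the transformation $\alpha$ shows that $\HP(jr,M)$ is an isomorphism in $\Der(\Ab)$ if and only if $\HP(1_X,M)=1_{\HP(X,M)}$ is; hence $\HP(jr,M)$ is an isomorphism for every \SL{} $M$ on $X$. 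Therefore $jr$ is a polygraphic equivalence, and Proposition~\ref{satfaible} then yields that $j$ is one as well.

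I do not anticipate a genuine obstacle: the only step requiring a little care is the (standard) fact that an oplax transformation induces a natural transformation after applying $c_1$, so that $\Pi_1(\alpha)$ is well defined and automatically invertible in the groupoid $\Pi_1(X)$; everything else is an immediate invocation of Propositions~\ref{prop:3.27} and~\ref{satfaible}, entirely in the spirit of the proof of Corollary~\ref{retris}.
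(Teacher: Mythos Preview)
Your argument is correct and follows essentially the same route as the paper. The only cosmetic difference is in how the pieces are packaged: you first show that $jr$ is a polygraphic equivalence (via Proposition~\ref{prop:3.27} for the homology and the induced natural isomorphism on $\Pi_1$) and then invoke Proposition~\ref{satfaible}, whereas the paper handles $\Pi_1(j)$ directly and then appeals to Corollary~\ref{retris} (together with Remark~\ref{remastbis}) for the homological part; since the proof of Corollary~\ref{retris} is precisely Proposition~\ref{prop:3.27} followed by the same retraction trick that underlies Proposition~\ref{satfaible}, the two arguments are identical in content.
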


\begin{proof}
L'hypothèse que $r$ est une rétraction de $j$ implique que $\Pi_1(r)\Pi_1(j)=1_{\Pi_1(X')}$, et la transformation oplax $\alpha$ induit une transformation naturelle $\Pi_1(j)\Pi_1(r)=\Pi_1(jr)\to1_{\Pi_1(X)}$, qui est un isomorphisme, puisque $\Pi_1(X)$ est un groupoïde. On en déduit que $\Pi_1(j)\Pi_1(r)$ est un isomorphisme, donc $\Pi_1(j)$ aussi. La proposition résulte alors du corollaire~\ref{retris} et de la remarque~\ref{remastbis}. 
\end{proof}

\begin{cor}\label{projpol}
Pour toute $\infty$\nbd-catégorie $X$, la projection $D_1\otimes X\overset{r}{\to} D_0\otimes X\simeq X$ est une équivalence polygraphique.
\end{cor}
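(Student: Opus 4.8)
Le principe est de voir la projection $r$ comme la r\'etraction d'un r\'etracte par d\'eformation oplax, puis d'invoquer la proposition~\ref{retropl} ainsi que la propri\'et\'e du $2$ sur $3$. On pose d'abord $j=\partial^0_X\colon X\simeq D_0\otimes X\to D_1\otimes X$, l'inclusion associ\'ee \`a l'objet $0$ de $D_1=\{0\to1\}$. Comme $r$ est induit par l'unique foncteur $D_1\to D_0$ et $j$ par le foncteur $0\colon D_0\to D_1$, leur compos\'e $rj$ est induit par l'identit\'e de $D_0$, d'o\`u $rj=1_X$; en particulier $r$ est une r\'etraction de $j$.

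On construit ensuite une transformation oplax $\alpha$ de $jr$ vers $1_{D_1\otimes X}$. Dans $\Cat\subset\ooCat$, il existe une transformation naturelle stricte (donc oplax) $\eta$ de l'endofoncteur $\kappa$ de $D_1$ constant en l'objet $0$ vers $1_{D_1}$, de composante l'identit\'e en $0$ et la fl\`eche g\'en\'eratrice $0\to1$ en $1$; de fa\c con \'equivalente, $\eta$ est un $\infty$\nbd-foncteur $\widetilde\eta\colon D_1\otimes D_1\to D_1$ tel que $\widetilde\eta\,\partial^0_{D_1}=\kappa$ et $\widetilde\eta\,\partial^1_{D_1}=1_{D_1}$. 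En appliquant le foncteur $-\otimes X$ et en utilisant l'associativit\'e du produit de Gray, le compos\'e
$$
D_1\otimes(D_1\otimes X)\overset{\sim}{\longrightarrow}(D_1\otimes D_1)\otimes X\overset{\widetilde\eta\otimes X}{\longrightarrow}D_1\otimes X
$$
d\'efinit un $\infty$\nbd-foncteur $\alpha$ dont les restrictions le long de $\partial^0_{D_1\otimes X}$ et $\partial^1_{D_1\otimes X}$ sont, par fonctorialit\'e et associativit\'e, $\kappa\otimes X=jr$ et $1_{D_1}\otimes X=1_{D_1\otimes X}$ respectivement; ainsi $\alpha$ est une transformation oplax de $jr$ vers $1_{D_1\otimes X}$. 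C'est cette identification des deux restrictions de $\alpha$ qui demande un peu de soin, mais une fois admises la fonctorialit\'e de $-\otimes X$ et l'associativit\'e du produit de Gray, elle est routini\`ere.

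Enfin, la proposition~\ref{retropl}, appliqu\'ee \`a $j$, \`a sa r\'etraction $r$ et \`a la transformation $\alpha$, montre que $j$ est une \'equivalence polygraphique; comme $1_X$ est trivialement une \'equivalence polygraphique, la propri\'et\'e du $2$ sur $3$ (proposition~\ref{prop:2sur3}) appliqu\'ee \`a $rj=1_X$ entra\^ine que $r$ est une \'equivalence polygraphique, ce qui est l'assertion. On peut aussi \'eviter cette derni\`ere \'etape en concluant directement par le corollaire~\ref{retris}: celui-ci donne que $\HP(r,j^*(M))$ est un isomorphisme pour tout \SL{} $M$ sur $D_1\otimes X$, et tout \SL{} $N$ sur $X$ est de cette forme puisque $N=j^*(r^*(N))$; en combinant avec le fait que $\Pi_1(r)$ est une \'equivalence de groupo\"ides (par le m\^eme argument que dans la preuve de la proposition~\ref{retropl}, via $\alpha$), on obtient de nouveau que $r$ est une \'equivalence polygraphique.
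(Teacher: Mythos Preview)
Your proof is correct and follows essentially the same approach as the paper: define $j$ as the inclusion at $0$, exhibit $r$ as a retraction of $j$, build the oplax transformation from $jr$ to the identity by tensoring the natural transformation $\kappa\Rightarrow 1_{D_1}$ with $X$, apply proposition~\ref{retropl}, and conclude for $r$ by the $2$ sur $3$ property. The only difference is that you spell out the associativity step and the restriction check more explicitly, and offer an alternative conclusion via corollaire~\ref{retris}, but the argument is the same.
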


\begin{proof}
Soit $j:X\simeq D_0\otimes X\to D_1\otimes X$ le produit de Gray de l'inclusion $D_0=\{0\}\hookrightarrow\{0\to1\}=D_1$ par $X$. Le $\infty$\nbd-foncteur $r$ est une rétraction de $j$. La transformation naturelle de l'endofoncteur constant de valeur $0$ de $D_1$ vers $1_{D_1}$ définit un morphisme $D_1\otimes D_1\to D_1$. On vérifie aussitôt que le produit de Gray de ce morphisme par $X$ définit une transformation oplax de $jr$ vers $1_{D_1\otimes X}$, ce qui implique en vertu de la proposition \ref{retropl} que $j$ est une équivalence polygraphique, et par suite $r$ aussi (proposition~\ref{prop:2sur3}).
\end{proof}

\section{Rappels sur l'homologie d'un système local $1$-catégorique}

Soient $X$ une petite catégorie et $M$ un \SLF{} (autrement dit, un préfaisceau abélien $M$ sur $X$). L'homologie $\HT(X,M)$ de $X$ à coefficients dans $M$ (appelée aussi homologie du foncteur $M$) généralise l'homologie d'un groupe $G$ à coefficients dans un $\mathbb Z$\nbd-module à droite (c'est-à-dire un préfaisceau abélien sur $G$, vu comme une catégorie à un seul objet). Classiquement, si $G$ est un groupe, ou plus généralement un monoïde, et $M$ un $G$\nbd-module à droite, les groupes d'homologie $\H_n(G,M)$ sont définis comme étant les groupes abéliens $\Tor^{\mathbb{Z}[G]}_n(M,\mathbb Z)$, où $\mathbb{Z}[G]$ désigne l'algèbre du groupe (ou du monoïde) $G$,\penalty-550{} $M$ le $\mathbb{Z}[G]$\nbd-module à droite défini par l'action de $G$, et $\mathbb Z$ est vu comme $\mathbb{Z}[G]$\nbd-module à gauche, $G$ agissant trivialement~\cite{Cart-Eil}. Or, les foncteurs $M\mapsto\Tor^{\mathbb{Z}[G]}_n(M,\mathbb Z)$ sont les foncteurs dérivés à gauche du foncteur $M\mapsto M\otimes_{\mathbb{Z}[G]}\mathbb Z$, et par ailleurs on a un isomorphisme canonique
$$
M\otimes_{\mathbb{Z}[G]}\mathbb Z\simeq\varinjlim_{G^\circ} M
$$
du groupe $M\otimes_{\mathbb{Z}[G]}\mathbb Z$ avec la limite inductive du foncteur $M:G^\circ\to\Ab$, groupe des \emph{coïnvariants} de $M$ sous l'action de $G$. Par conséquent, le groupe d'homologie $\H_n(G,M)$ s'identifie à l'évaluation en $M$ du $n$\nbd-ème foncteur dérivé à gauche du\penalty-600{} foncteur limite inductive, autrement dit au $n$\nbd-ème groupe d'homologie de la \clh{} de $M$, vue comme objet de la catégorie dérivée des groupes abéliens. 
\smallbreak

Ainsi, si $X$ est une petite catégorie et $M$ un \SLF{} sur~$X$, autrement dit un préfaisceau $X^{\circ} \to \Ab$, on définit l'\emph{homologie de $X$ à coefficients dans $M$} par la formule suivante
  \[
    \HT(X,M):=\hocolim M=\hocolim_{x \in X^{\circ}}M_x\,,
  \]
  où la \clh{} est prise dans la catégorie dérivée des groupes abéliens. 
\smallbreak

Si $X$ est une $\infty$\nbd-catégorie, on aimerait pouvoir définir son homologie par la même formule. Néanmoins, en l'état actuel des connaissances, on ne sait pas interpréter cette formule dans le cadre $\infty$\nbd-catégorique. Comme par définition le type d'homotopie d'une $\infty$\nbd-catégorie est le type d'homotopie de son nerf, on est conduit à poser la définition suivante. Si $X$ est une $\infty$\nbd-catégorie et~$M$ un \SLF{} sur X, l'\emph{homologie de $X$ à coefficient dans $M$} est définie par la formule
$$
\HT(X,M):=\HT(N(X),M)\,,
$$
où $N$ désigne le nerf de Street. Cette formule a un sens car on vérifie facilement que la catégorie fondamentale de l'ensemble simplicial $N(X)$ est canoniquement isomorphe à celle de la $\infty$\nbd-catégorie $X$, de sorte que les \SLFS{} sur $X$ s'identifient aux \SLFS{} sur $N(X)$.
\smallbreak

La proposition suivante montre que ces deux définitions coïncident quand $X$ est une ($1$\nbd-)catégorie et $M$ un \emph{\SL} (et pas seulement un \SLF)~sur~$X$.

\begin{prop}\label{comp2def}
Soient $X$ une catégorie et $M$ un système local sur $X$. On a un isomorphisme canonique
$$
\hocolim_{x \in X^{\circ}}M_x\simeq\HT(N(X),M)\,.
$$
\end{prop}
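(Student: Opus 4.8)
Le plan est d'identifier explicitement, de fa\c con canonique, le complexe $\C(N(X),M)$ repr\'esentant $\HT(N(X),M)$ avec le complexe de cha\^\i nes du \emph{remplacement simplicial} (\`a la Bousfield--Kan, ce qui revient \`a la construction bar) du foncteur $M$ regard\'e comme foncteur covariant sur $X^\circ$, ce dernier \'etant un mod\`ele standard de la \clh{} $\hocolim_{x\in X^\circ}M_x$ dans $\Der(\Ab)$.

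On commencerait par expliciter $\C(N(X),M)$. Comme le nerf de Street prolonge le nerf usuel $1$\nbd-cat\'egorique (cf.~paragraphe~\ref{NerfStreet}), la cat\'egorie fondamentale de $N(X)$ s'identifie canoniquement \`a $X$, de sorte que $M$ est un \SLF{} sur $N(X)$ au sens du paragraphe~\ref{def:homsimpl}: un $n$\nbd-simplexe de $N(X)$ est une suite de fl\`eches composables $x_0\to x_1\to\cdots\to x_n$ de $X$, de derni\`ere fl\`eche not\'ee $f_n:x_{n-1}\to x_n$; le groupe de coefficients est $M_{x_n}$ attach\'e au \emph{dernier} sommet, la face $d_n$ tord par $x^*_{n-1,n}=M(f_n):M_{x_n}\to M_{x_{n-1}}$, et toutes les autres faces et toutes les d\'eg\'en\'erescences ne font que composer ou ins\'erer des fl\`eches. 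On observerait ensuite qu'une telle suite n'est autre, lue \`a l'envers, qu'une suite de fl\`eches composables $x_n\to x_{n-1}\to\cdots\to x_0$ de $X^\circ$ dont l'objet \emph{initial} est $x_n$. Sous la bijection sur les simplexes qui en r\'esulte (renversement d'indices $k\mapsto n-k$), la valeur au dernier sommet devient la valeur \`a l'objet initial de la suite dans $X^\circ$, la face tordue $d_n$ de $\C(N(X),M)$ devient la face $d_0$ du remplacement simplicial (la seule qui fasse agir $M$), et les autres op\'erateurs de structure se correspondent. On obtiendrait ainsi un isomorphisme de groupes ab\'eliens simpliciaux, donc de complexes de cha\^\i nes, entre $\C(N(X),M)$ et le complexe du remplacement simplicial de $M:X^\circ\to\Ab$.

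Il resterait \`a invoquer le fait classique --- rappel\'e au d\'ebut de cette section dans le cas d'un mono\"\i de, o\`u il s'agit de $\Tor^{\mathbb Z[G]}(M,\mathbb Z)$ calcul\'e par la r\'esolution bar --- que, pour tout foncteur $F:\mathcal{D}\to\Ab$ d'une petite cat\'egorie, le complexe associ\'e \`a son remplacement simplicial calcule le foncteur d\'eriv\'e total \`a gauche du foncteur limite inductive $\varinjlim:\Ab^{\mathcal{D}}\to\Ab$, \'evalu\'e en $F$, autrement dit la \clh{} $\hocolim_{d\in\mathcal{D}}F_d$. En l'appliquant \`a $\mathcal{D}=X^\circ$ et $F=M$, la comparaison de complexes ci-dessus fournit l'isomorphisme canonique cherch\'e $\HT(N(X),M)\simeq\hocolim_{x\in X^\circ}M_x$ dans $\Der(\Ab)$.

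La difficult\'e est mince. Le seul point demandant du soin est la v\'erification d\'etaill\'ee du renversement d'indices --- notamment que la face tordue $d_n$ de $\C(N(X),M)$ correspond exactement \`a la face $d_0$ du remplacement simplicial, ce qui est le lieu o\`u paie la convention consistant \`a placer le coefficient sur le dernier sommet et \`a tordre par $x^*_{n-1,n}$ --- ainsi que l'appariement des d\'eg\'en\'erescences; le fait homologique du paragraphe pr\'ec\'edent est standard et serait simplement cit\'e. On remarquera enfin que l'argument n'utilise que la structure de pr\'efaisceau de $M$, de sorte que l'\'enonc\'e vaut plus g\'en\'eralement pour un \SLF{} arbitraire, l'hypoth\`ese que $M$ est un \SL{} n'\'etant pas utilis\'ee.
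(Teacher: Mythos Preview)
Ta d\'emarche est correcte et aboutit, mais elle diff\`ere r\'eellement de celle de l'article. L'article ne proc\`ede pas par identification directe avec la construction bar~; il d\'ecompose d'abord $(X,1_X)$ comme \clh{} dans $\cm{\Cat}{X}$ du diagramme $x\mapsto\mc{x}{X}$ (lemmes~\ref{lemme:abstr}--\ref{lemma:colslice}), transporte cette d\'ecomposition par le nerf et par le foncteur $F$ de Quillen \`a gauche (lemme~\ref{lemme5.4} et th\'eor\`eme~\ref{thtopol}), puis montre terme \`a terme que $\C(N(\mc{x}{X}),M\vert_{\mc{x}{X}})\simeq M_x$ via l'objet initial de $\mc{x}{X}$ et le th\'eor\`eme~\ref{thtopol}. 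Cette derni\`ere \'etape utilise explicitement que $M$ est un \SL{}, d'o\`u la restriction dans l'\'enonc\'e. Ton argument est plus \'el\'ementaire, contourne ces lemmes interm\'ediaires, et~--- comme tu le remarques~--- s'\'etend aux \SLFS{}~; en contrepartie, la strat\'egie de l'article est exactement celle qui sera reprise, \emph{mutatis mutandis}, pour d\'emontrer le th\'eor\`eme~\ref{thm:Leonard}, et pr\'epare donc la suite.

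Un point \`a corriger cependant~: ta bijection par renversement d'indices $k\mapsto n-k$ envoie $d_i$ sur $d_{n-i}$ (et $s_i$ sur $s_{n-1-i}$), de sorte que ce que tu obtiens n'est pas un isomorphisme de groupes ab\'eliens simpliciaux entre $\C(N(X),M)$ et le remplacement simplicial, mais un isomorphisme entre l'un et \emph{l'oppos\'e} de l'autre (au sens de l'involution de $\Catsmp$). Cela n'affecte pas la conclusion~--- les complexes de Moore associ\'es sont isomorphes via des signes $(-1)^{n(n-1)/2}$, ou bien on invoque que le passage \`a l'oppos\'e pr\'eserve le type d'homotopie simplicial~--- mais il faudrait l'expliciter plut\^ot que d'affirmer un isomorphisme simplicial direct.
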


Même si cette proposition n'est pas vraiment originale, on en esquisse une preuve pour la convenance du lecteur. Pour cela, on aura besoin des lemmes suivants.

\begin{lemme}\label{lemme:abstr}
Soient $C$ une catégorie de modèles, $X$ un objet de $C$, et considérons la catégorie $\cm{C}{X}$, munie de la structure de catégorie de modèles induite de celle de~$C$. Alors le foncteur d'oubli $\cm{C}{X}\to C$ respecte et reflète les \clhs.
\end{lemme}

\begin{proof}
Par définition de la structure de catégorie de modèles induite, le foncteur d'oubli $U:\cm{C}{X}\to C$ respecte les équivalences faibles et les cofibrations, et comme il admet un adjoint à droite, il est, en particulier, un foncteur de Quillen à gauche. Il respecte donc les \clhs. Pour montrer qu'il les reflète également, il suffit alors de remarquer que le foncteur $\Ho(\cm{C}{X})\to\Ho(C)$, induit par~$U$, est conservatif. Cette dernière assertion résulte du fait que le foncteur d'oubli respecte et reflète les équivalences faibles, et du fait que toute flèche de la catégorie homotopique d'une catégorie de modèles $C$ peut s'écrire sous la forme $P(h)^{-1}P(g)P(f)^{-1}$, où $P:C\to\Ho(C)$ désigne le foncteur de localisation et $f,h$ sont des équivalence faibles et $g$ une flèche arbitraire de $C$. 
\end{proof}

\begin{lemme}\label{lemma:colslice}
Soit $X$ une petite catégorie, et considérons la catégorie $\cm{\Cat}{X}$ munie de la structure de catégorie de modèles induite de celle Thomason sur $\Cat$~\cite{Th2}. Alors l'objet $(X,1_X)$ de $\cm{\Cat}{X}$ est la \clh{} dans $\cm{\Cat}{X}$ du foncteur 
\[
X^{\circ} \to \cm{\Cat}{X}\,,\qquad x \mapsto \mc{x}{X},
\]
 où chaque $\mc{x}{X}$ est équipé de la projection canonique vers $X$.
\end{lemme}

\begin{proof}
D'après le lemme \ref{lemme:abstr}, il suffit de voir que la \clh{} dans $\Cat$ du foncteur $X^{\circ} \to \Cat$, $x \mapsto\mc{x}{X}$ est isomorphe à $X$, ce qui suit facilement du fait que ce foncteur est faiblement équivalent au foncteur constant de valeur la catégorie ponctuelle, et du calcul de la \clh{} de ce dernier à l'aide de la construction de Grothendieck (voir~\cite{Th1}). 
\end{proof}

\begin{lemme}\label{lemme5.4}
Soit $X$ une petite catégorie, et considérons les catégories $\cm{\Cat}{X}$ et $\cm{\pref{\Catsmp}}{N(X)}$ munies des structures de catégorie de modèles induites respectivement de celle de Thomason~\cite{Th2} et de celle de Kan-Quillen~\cite{Qu}. Alors le foncteur 
$$
\cm{N}{X}:\cm{\Cat}{X}\to\cm{\pref{\Catsmp}}{N(X)}\,,\quad (X'\kern -2pt,p:X'\to X)\mapsto(N(X'),N(p):N(X')\to N(X))\,,
$$
commute aux \clhs.
\end{lemme}

\begin{proof}
On a un carré commutatif
$$\xymatrix{
\cm{\Cat}{X}\ar[r]^-{\cm{N}{X}}\ar[d]
&\cm{\pref{\Catsmp}}{N(X)}\ar[d]
\\
\Cat\ar[r]_N
&\pref{\Catsmp}
}$$
dont les flèches verticales sont les foncteurs d'oubli. Comme en vertu du lemme~\ref{lemme:abstr}, ces deux foncteurs respectent et reflètent les \clhs, le lemme résulte du fait que le foncteur nerf respecte les équivalences faibles et préserve les \clhs, puisque, selon le goût du lecteur, il induit une équivalence de dérivateurs ou une équivalence de $(\infty,1)$\nbd-catégories.
\end{proof}

\begin{proof}[Démonstration de la proposition~\ref{comp2def}]
L'objet $\HT(N(X),M)$ de la catégorie dérivée des groupes abéliens est défini par le complexe image de l'objet $(X,1_X)$ de $\cm{\Cat}{X}$ par le composé des foncteurs
$$
\xymatrix{
&\cm{\Cat}{X}\ar[r]^-{\cm{N}{X}}
&\cm{\pref{\Catsmp}}{N(X)}\ar[r]^-F
&\Comp(\Ab)
&\kern -20pt,\kern20pt
}
$$
où $F$ désigne le foncteur associant à un objet $(T,p:T\to N(X))$ de $\cm{\pref{\Catsmp}}{N(X)}$ le complexe $\C(T,p^*(M))$, introduit au paragraphe~\ref{def:homsimpl}. Or, en vertu du lemme \ref{lemme5.4}, le foncteur $\cm{N}{X}$ commute aux \clhs, et il en est de même du foncteur~$F$, puisqu'on a vu dans la preuve du théorème~\ref{thtopol} qu'il est un foncteur de Quillen à gauche. Il résulte donc du lemme~\ref{lemma:colslice} qu'on a des isomorphismes canoniques dans la catégorie dérivée des groupes abéliens
$$\begin{aligned}
\HT(N(X),M)&=F\circ\cm{N}{X}\,(X,1_X)\simeq F\circ\cm{N}{X}\bigl(\hocolim_{x\in X^\circ}\mc{x}{X}\bigr)\cr
&\simeq\hocolim_{x\in X^\circ}F\circ\cm{N}{X}(\mc{x}{X})=\hocolim_{x\in X^\circ}\C(N(\mc{x}{X}),M\vert_{\mc{x}{X}})\,,
\end{aligned}$$
où $M\vert_{\mc{x}{X}}=p_x^*(M)$ pour $p_x:\mc{x}{X}\to X$ le foncteur canonique d'oubli. 
\smallbreak

Pour conclure, on va définir une équivalence faible naturelle du foncteur
$$
X^\circ\to\Comp(\Ab)\,,\qquad x\mapsto\C(N(\mc{x}{X}),M\vert_{\mc{x}{X}})\,,
$$
vers le foncteur
$$
X^\circ\to\Comp(\Ab)\,,\qquad x\mapsto M_x\,,
$$
le groupe abélien $M_x$ étant vu comme un complexe concentré en degré $0$. On définit un morphisme de groupes abéliens
$$
\varphi_x:\C_0(N(\mc{x}{X}),M\vert_{\mc{x}{X}})=\kern-5pt\textstyle\bigoplus\limits_{(y,\,p\,:\,x \to y)}\kern-5ptM_y \longrightarrow M_x\,,\qquad(y,p,m)\mapsto p^*(m)\,.
$$
On rappelle que la différentielle $d:\C_{1}(N(\mc{x}{X}),M\vert_{\mc{x}{X}}) \to \C_0(N(\mc{x}{X}),M\vert_{\mc{x}{X}})$ est définie par la
formule
\[
\textstyle\bigoplus\limits_{f\,:\,(y,p)\to (y',p')}\kern -10ptM_{y'} \ \longrightarrow\kern -6pt \textstyle\bigoplus\limits_{(y,\,p\,:\,x \to y)}\kern-2ptM_y\,,\qquad(f,m)\mapsto \left((y',p',m) - (y,p,f^*(m))\right).
\]
On vérifie alors que pour un élément $(f,m)$ de $\C_1(N(\mc{x}{X}),M\vert_{\mc{x}{X}})$, on a
\[
\begin{aligned}
    \varphi_x(d(f,m)) &= \varphi_x\left((y',p',m) - (y,p,f^*(m))\right) \\
    &= p'{}^*(m) - p^*(f^*(m))= p'{}^*(m) - (fp)^*(m)=0\,.
\end{aligned}
\]
Cela signifie exactement que $\varphi_x$ est compatible à la différentielle et on a donc défini un morphisme de complexes
  \[
    \varphi_x : \C(N(\mc{x}{X}),M\vert_{\mc{x}{X}}) \to M_x,
  \]
où $M_x$ est concentré en degré $0$. 
\smallbreak

La naturalité de $\varphi_x$ en $x$ étant évidente, il reste à prouver que pour tout objet $x$ de~$X$ le morphisme de complexes $\varphi_x$ est un quasi-isomorphisme. Pour cela, considérons le morphisme $e\to\mc{x}{X}$ de la catégorie ponctuelle vers la catégorie $\mc{x}{X}$ défini par l'objet initial de cette dernière. Le morphisme $N(e)\to N(\mc{x}{X})$ est alors une équivalence faible simpliciale et le théorème~\ref{thtopol} implique que la flèche induite
\[
M_x \simeq \C(N(e),M_x) \longrightarrow \C(N(\mc{x}{X}),M\vert_{\mc{x}{X}})
\]
est un quasi-isomorphisme. Comme ce morphisme est une section de $\varphi_x$, on conclut par 2 sur 3.
\end{proof}

\section{L'homologie polygraphique d'un système local $1$-catégorique}\label{Leonard2}

Soient $X$ une $\infty$\nbd-catégorie et $M$ un \SLF{} sur $X$. On définit un morphisme $\HT(X,M)\to\HP(X,M)$ dans la catégorie dérivée des groupes abéliens, de source l'homologie et but l'homologie polygraphique de $X$ à coefficients dans $M$ comme suit. Le morphisme d'adjonction $cN(X)\to X$ induit un morphisme $\HP(cN(X),M)\to\HP(X,M)$ (en se souvenant que les \SLFS{} sur $X$, $N(X)$ et $cN(X)$ se correspondent bijectivement). Or, en vertu du théorème~\ref{thm:compc}, on a un isomorphisme canonique
$$
\HP(cN(X),M)\simeq\HT(N(X),M)=\HT(X,M)\,,
$$
d'où le morphisme désiré.
\smallbreak

Le but de cette section est de montrer que dans le cas particulier où $X$ est une petite catégorie et $M$ un \SL{} sur $X$ (autrement dit, un préfaisceau abélien $M$ sur $\Pi_1(X)$), le morphisme de comparaison ci-dessus est un isomorphisme, autrement dit que l'homologie polygraphique $\HP(X,M)$ coïncide avec l'homologie usuelle $\HT(X,M)$. Cela résultera du théorème suivant, de la proposition~\ref{comp2def}, et de l'observation (dont la vérification est laissée au lecteur) que les isomorphismes canoniques en question sont compatibles au morphisme de comparaison.

\begin{thm}\label{thm:Leonard}
Soient $X$ une catégorie et $M$ un \SL{} sur $X$. On a un isomorphisme canonique dans la catégorie dérivée des groupes abéliens
$$
\hocolim_{x \in X^{\circ}}M_x\simeq\HP(X,M)\,.
$$
\end{thm}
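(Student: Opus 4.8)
La strat\'egie sera de d\'ecomposer $X$ comme une limite inductive homotopique de ses tranches et de calculer l'homologie polygraphique de chacune d'elles.
\smallbreak

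On commencera par \'etablir, de mani\`ere analogue au lemme~\ref{lemma:colslice}, que l'objet $(X,1_X)$ de la cat\'egorie $\cm{\ooCat}{X}$, munie de la structure de cat\'egorie de mod\`eles induite de la structure folk sur $\ooCat$, est la \clh{} du foncteur
$$
D:X^\circ\longrightarrow\cm{\ooCat}{X}\,,\qquad x\longmapsto(\mc{x}{X}\overset{p_x}{\longrightarrow}X)\,,
$$
o\`u $p_x$ d\'esigne la projection canonique. Une fois ce point acquis, comme le foncteur $\F_{X,M}:\cm{\ooCat}{X}\to\Comp(\Ab)$ est un foncteur de Quillen \`a gauche (th\'eor\`eme~\ref{th:Quilleng}), son foncteur d\'eriv\'e total \`a gauche commute aux \clhs, et l'on obtiendra des isomorphismes canoniques dans la cat\'egorie d\'eriv\'ee des groupes ab\'eliens
$$
\HP(X,M)=\LL\F_{X,M}(X,1_X)\simeq\hocolim_{x\in X^\circ}\LL\F_{X,M}(\mc{x}{X}\to X)\simeq\hocolim_{x\in X^\circ}\HP(\mc{x}{X},p_x^*(M))\,.
$$
\smallbreak

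Il restera alors \`a identifier chaque terme $\HP(\mc{x}{X},p_x^*(M))$ au groupe $M_x$, plac\'e en degr\'e~$0$. Pour cela, on utilisera que la cat\'egorie $\mc{x}{X}$ admet l'objet initial $(x,1_x)$: l'inclusion $j_x:D_0\simeq\{(x,1_x)\}\hookrightarrow\mc{x}{X}$ admet la r\'etraction canonique $r_x$, et la famille des fl\`eches issues de l'objet initial d\'efinit une transformation naturelle, donc une transformation oplax, $\alpha$ de $j_xr_x$ vers $1_{\mc{x}{X}}$. Comme $M$ est un syst\`eme local, il en va de m\^eme du syst\`eme local faible $p_x^*(M)$ sur $\mc{x}{X}$, de sorte que la transformation $\alpha$ satisfera aux hypoth\`eses du corollaire~\ref{retris} (remarque~\ref{remastbis}); le morphisme $\HP(j_x,p_x^*(M))$ sera donc un isomorphisme, de source le groupe $M_x$ concentr\'e en degr\'e~$0$. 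On v\'erifiera de plus, \`a l'aide du morphisme de degr\'e~$0$ envoyant $(a,\,p:x\to a,\,m)$ sur $p^*(m)$ (analogue \`a l'application $\varphi_x$ de la preuve de la proposition~\ref{comp2def}), que l'isomorphisme $\HP(\mc{x}{X},p_x^*(M))\simeq M_x$ ainsi obtenu est naturel en $x\in X^\circ$.
\smallbreak

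En combinant ces deux \'etapes, on conclura \`a l'existence de l'isomorphisme canonique $\HP(X,M)\simeq\hocolim_{x\in X^\circ}M_x$ annonc\'e, la compatibilit\'e de cet isomorphisme avec celui de la proposition~\ref{comp2def} et avec le morphisme de comparaison de la section~\ref{Leonard2} (qui passe par $\HT(X,M)=\HT(N(X),M)\simeq\HP(cN(X),M)$, cf.~th\'eor\`eme~\ref{thm:compc}) r\'esultant d'une v\'erification de routine laiss\'ee au lecteur.
\smallbreak

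L'obstacle principal sera l'identification de $(X,1_X)$ \`a la \clh{} du foncteur $D$ \emph{pour la structure induite de la structure folk}. Contrairement au cas de la structure de Thomason trait\'e au lemme~\ref{lemma:colslice}, cette identification ne r\'esultera pas du seul fait que le foncteur $x\mapsto\mc{x}{X}$ est faiblement \'equivalent au foncteur constant de valeur la cat\'egorie ponctuelle: chaque tranche $\mc{x}{X}$, bien qu'ayant un objet initial (et donc Thomason-contractile), n'est pas folk-contractile. On se ram\`enera plut\^ot, gr\^ace au lemme~\ref{lemme:abstr} (le foncteur d'oubli $\cm{\ooCat}{X}\to\ooCat$ respecte et refl\`ete les \clhs), \`a prouver que le cocone canonique $\{\mc{x}{X}\to X\}_{x\in X^\circ}$ fait de $X$ la \clh, dans $\ooCat$ muni de la structure folk, du foncteur $x\mapsto\mc{x}{X}$; et cette derni\`ere assertion se d\'emontrera par un calcul direct de cette limite inductive homotopique, par exemple au moyen d'une construction de type bar faisant intervenir des r\'esolutions polygraphiques des tranches. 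C'est l\`a le c\oe ur technique de l'argument.
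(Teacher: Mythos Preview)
Your overall strategy coincides with the paper's: decompose $X$ as a homotopy colimit of its slices over $X^\circ$, use that $\LL\F_{X,M}$ commutes with homotopy colimits, and identify each slice contribution with $M_x$ via the initial-object argument (corollaire~\ref{retris}). The identification $\HP(\mc{x}{X},p_x^*(M))\simeq M_x$ is handled essentially as you describe.

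The gap is in the step you yourself flag as the ``c\oe ur technique''. Your suggested route --- a bar construction involving polygraphic resolutions \emph{des tranches} --- is not how the paper proceeds, and would be problematic: separate polygraphic resolutions $Q_x\to\mc{x}{X}$ are not functorial in $x$ a priori, so neither the diagram nor the naturality of the quasi-isomorphism $\psi_x$ would be available at the chain level. The paper's key move is instead to fix \emph{one} polygraphic resolution $p:P\to X$ of $X$ and form the pulled-back slices $\mc{x}{P}:=P\times_X\mc{x}{X}$. These are automatically polygraphs, the diagram $x\mapsto\mc{x}{P}$ is projectively cofibrant in $\Homint(X^\circ,\cm{\ooCat}{X})$ (lemme~\ref{lem:obcof}, citant~\cite[Proposition~7.9]{Leonard}), and its ordinary colimit is $P$ (lemme~\ref{lem:colim}). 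Thus $\hocolim_x\mc{x}{P}\simeq P$, folk-equivalent to $X$, which is precisely the identification you need --- obtained without any bar resolution. Moreover, since each $\mc{x}{P}$ is already cofibrant, the quasi-isomorphism $\psi_x:\C(\mc{x}{P},p_x^*(M))\to M_x$ computes $\HP$ directly and its naturality (lemme~\ref{lem:fonctLeo}) is a chain-level verification, avoiding the derived-level naturality check your sketch would require.

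In short: replace ``r\'esolutions polygraphiques des tranches'' by ``tranches d'une r\'esolution polygraphique'', and your argument becomes the paper's.
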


Soit donc $X$ une petite catégorie, $M$ un \SL{} sur $X$, et choisissons une résolution polygraphique $p:P\to X$ de $X$ dans $\ooCat$, de sorte qu'on a un isomorphisme canonique $\HP(X,M)\simeq\C(P,p^*(M))$ dans la catégorie dérivée des groupes abéliens.
On considère le foncteur
$$
\mc{?}{P}:X^\circ\to\cm{\ooCat}{X}\,,\qquad x\mapsto (\mc{x}{P},\,p_x:\mc{x}{P}\to X)\,,
$$
où $\mc{x}{P}$ désigne le produit fibré $P\times_X\mc{x}{X}$ et $p_x$ le composé de la projection $\mc{x}{P}\to P$ avec $p:P\to X$. Pour démontrer le théorème, on aura besoin des lemmes suivants.

\begin{lemme}\label{lem:obcof}
Le foncteur $\mc{?}{P}$ ci-dessus est un objet cofibrant de la catégorie $\Homint(X^\circ,\cm{\ooCat}{X})$ munie de la structure projective, $\cm{\ooCat}{X}$ étant équipée de la structure de catégorie de modèles induite de la structure folk sur $\ooCat$. En particulier, pour tout objet $x$ de $X$, la $\infty$\nbd-catégorie $\mc{x}{P}$ est un polygraphe.
\end{lemme}
 
\begin{proof}
Remarquons qu'on a un isomorphisme canonique
$$
\Homint(X^{\circ},\cm{\ooCat}{X})\simeq\cm{\Homint(X^{\circ},\ooCat)}{\mathrm{cst}_X}\,,
$$
où $\mathrm{cst}_X$ est le foncteur constant de valeur $X$. Par ailleurs, cet isomorphisme est compatible aux structures de catégories de modèles, dans le sens où la structure sur $\Homint(X^{\circ},\cm{\ooCat}{X})$ de l'énoncé est identifiée avec la structure de catégorie de modèles sur $\cm{\Homint(X^{\circ},\ooCat)}{\mathrm{cst}_X}$ induite par la structure projective folk sur $\Homint(X^\circ,\ooCat)$. En particulier, on en déduit que le foncteur $\mc{?}{P}:X^\circ\to\cm{\ooCat}{X}$ est un objet cofibrant de $\Homint(X^{\circ},\cm{\ooCat}{X})$ si et seulement son composé avec le foncteur d'oubli $\mc{?}{P}:X^\circ\to\ooCat$ est un objet cofibrant de $\Homint(X^{\circ},\ooCat)$, ce qui est exactement (le dual de) la proposition~7.9 de~\cite{Leonard}.
\end{proof}

\begin{lemme}\label{lem:colim}
L'objet $(P,p:P\to X)$ de $\cm{\ooCat}{X}$ est la \clh{} dans $\cm{\ooCat}{X}$ \emph{(}munie de la structure de catégorie de modèles induite de la structure folk sur $\ooCat$\emph{)} du foncteur $\mc{?}{P}:X^\circ\to\cm{\ooCat}{X}$.
\end{lemme}

\begin{proof}
Le lemme résulte aussitôt du lemme~\ref{lemme:abstr} et de~\cite[Lemma~7.5 et~Theorem~7.10]{Leonard}.
\end{proof}

Soit $x$ un objet de $X$. On rappelle que $p_x:\mc{x}{P}\to X$ désigne le composé du $\infty$\nbd-foncteur d'oubli $\mc{x}{P}\to P$ avec $p:P\to X$. Notons $q_x:\mc{x}{P}\to e$ l'unique $\infty$\nbd-foncteur de $\mc{x}{P}$ vers la $\infty$\nbd-catégorie ponctuelle $e$, et considérons le \SL{} $M_x$ sur $e$ défini par le groupe~$M_x$. On définit une transformation naturelle
$\varphi_x:p_x^*(M)\to q_x^*(M_x)$, en posant pour tout objet $(y,u:x\to p(y))$ de $\mc{x}{P}$
$$
\varphi_{x,(y,u)}=M_u:M_{p(y)}=(p_x^*(M))_{(y,u)}\to(q_x^*(M_x))_{(y,u)}=M_x\,,
$$
d'où un morphisme de \SLS{} dans $\ooCat$ $(q_x,\varphi_x):(\mc{x}{P},p_x^*(M))\to(e,M_x)$ (cf.~paragraphe~\ref{DoubleFonct}).
On en déduit un morphisme de complexes 
$$
\psi_x:=\C(q_x,\varphi_x):\C(\mc{x}{P},p_x^*(M))\to\C(e,M_x)\simeq M_x
$$
(où on note également $M_x$ le complexe concentré en degré $0$ correspondant à $M_x$).

\begin{lemme}\label{lem:fonctLeo}
Le morphisme de complexes $\psi_x:\C(\mc{x}{P},p_x^*(M))\to M_x$ est naturel en $x$.
\end{lemme}

\begin{proof}
Il s'agit de montrer que si $v:x\to x'$ est un morphisme de la catégorie $X$, alors le carré
$$
\xymatrixcolsep{4pc}
\xymatrix{
\C(\mc{x'}{P},p_{x'}^*(M))\ar[r]^-{\C(q_{x'},\varphi_{x'})}\ar[d]_{\C(\mc{v}{P},\,1_{p_{\kern -1ptx'}^*(M)})}
&\C(e,M_{x'})\ar[d]^{\C(1_e,M_v)}
\\
\C(\mc{x}{P},p_x^*(M))\ar[r]_-{\C(q_x,\varphi_x)}
&\C(e,M_x)
}$$
est commutatif. Or, on a des égalités de morphismes de \SLS{} dans $\ooCat$ (cf.~paragraphe~\ref{DoubleFonct})
$$\begin{aligned}
(q_x,\varphi_x)\circ(\mc{v}{P},\,1_{p_{\kern -1ptx'}^*(M)})&=(q_x\circ\mc{v}{P},\,(\mc{v}{P})^*(\varphi_x))\cr
\noalign{\vskip3pt}
(1_e,M_v)\circ(q_{x'},\varphi_{x'})&=(q_{x'},q_{x'}^*(M_v)\varphi_{x'})
\end{aligned}$$
et on remarque que $q_x\circ\mc{v}{P}=q_{x'}$, et que pour tout objet $(y,u:x'\to p(y))$ de $\mc{x'}{P}$, 
$$
((\mc{v}{P})^*(\varphi_x))_{(y,u)}=(\varphi_x*\mc{v}{P})_{(y,u)}=\varphi_{x,(y,uv)}=M_{uv}=M_vM_u=(q_{x'}^*(M_v)\varphi_{x'})_{(y,u)}\,,
$$
ce qui achève la démonstration.
\end{proof}

\begin{lemme}\label{lemme:6.5}
Le morphisme de complexes $\psi_x:\C(\mc{x}{P},p_x^*(M))\to M_x$ est un quasi-isomorphisme.
\end{lemme}

\begin{proof}
Par définition, $\psi_x=\C(q_x,\varphi_x)=\C(q_x,M_x)\circ\C(\mc{x}{P},\varphi_x)$ (cf.~paragraphe~\ref{DoubleFonct}), et comme $M$ est un \SL{} (et pas seulement un \SLF), la transformation naturelle $\varphi_x$ est un isomorphisme, et par suite, $\C(\mc{x}{P},\varphi_x)$ est un isomorphisme de complexes. Il suffit donc de prouver que $\C(q_x,M_x)$ est un quasi-isomorphisme, autrement dit (puisque en vertu du lemme~\ref{lem:obcof} la $\infty$\nbd-catégorie $\mc{x}{P}$ est un polygraphe) que $\HP(q_x,M_x)$ est un isomorphisme de la catégorie dérivée des groupes abéliens. Notons $r_x:\mc{x}{X}\to e$ le morphisme canonique et $j_x:e\to\mc{x}{X}$ le foncteur défini par l'objet initial $(x,1_x)$ de la catégorie $\mc{x}{X}$.\penalty-500{} On a $q_x=r_x\circ\mc{x}{p}$, où $\mc{x}{p}\,:\mc{x}{P}\to\mc{x}{X}$ désigne le $\infty$\nbd-foncteur induit par $p$, et par suite, $\HP(q_x,M_x)=\HP(r_x,M_x)\circ\HP(\mc{x}{p},r_x^*(M_x))$ (cf.~paragraphe~\ref{fonctHpX}).\penalty-500{} Or, la stabilité des fibrations triviales par images inverses implique que le morphisme $\mc{x}{p}$ est une fibration triviale folk, et en particulier une équivalence folk, ce qui implique, en vertu du théorème~\ref{th:fond}, que \smash{$\HP(\mc{x}{p},r_x^*(M_x))$} est un isomorphisme. Il reste à montrer que $\HP(r_x,M_x)$ est un isomorphisme, ce qui résulte aussitôt du corollaire~\ref{retris} et de la remarque~\ref{remastbis}, en observant que $r_x$ est une rétraction de $j_x$ et qu'on a une transformation naturelle canonique de $j_xr_x$ vers $1_{\mc{x}{X}}$. 
\end{proof}

\begin{proof}[Démonstration du théorème~\ref{thm:Leonard}]
Considérons le foncteur
$$
F=F_{X,M}:\cm{\ooCat}{X}\to\Comp(\Ab)\,,\qquad(X',f:X'\to X)\mapsto\C(X',f^*(M))\,,
$$
qui est un foncteur de Quillen à gauche (théorème~\ref{th:Quilleng}), de sorte qu'on a un carré commutatif à isomorphisme près
$$
\xymatrixcolsep{3pc}
\xymatrix{
\Ho(\Homint(X^\circ,\cm{\ooCat}{X}))\ar[r]^{\LL F^{X^\circ}_{X,M}}\ar[d]_{\shocolim}
&\Ho(\Homint(X^\circ,\Comp(\Ab)))\ar[d]^{\shocolim}
\\
\Ho(\cm{\ooCat}{X})\ar[r]_{\LL F_{X,M}}
&\Ho(\Comp(\Ab))\kern20pt,\kern-20pt
}$$
où $\LL F^{X^\circ}_{X,M}$ désigne le foncteur dérivé à gauche du foncteur
$$
\xymatrixcolsep{3pc}
\xymatrix{
\Homint(X^\circ,\cm{\ooCat}{X})\ar[r]^{\F^{X^\circ}_{X,M}}
&\Homint(X^\circ,\Comp(\Ab))
}$$
induit par $F_{X,M}$.
Or, le quasi-isomorphisme $\psi_x:\C(\mc{x}{P},p_x^*(M))\to M_x$ du lemme~\ref{lemme:6.5} étant naturel en $x$ (lemme~\ref{lem:fonctLeo}) définit une équivalence faible dans la catégorie $\Homint(X^\circ,\Comp(\Ab))$ de source $F^{X^\circ}_{X,M}(\mc{?}{P})$ et de but $M$. Comme en vertu du lemme~\ref{lem:obcof}, $\mc{?}{P}$ est un objet cofibrant de $\Homint(X^\circ,\cm{\ooCat}{X})$, on a $\LL F^{X^\circ}_{X,M}(\mc{?}{P})=F^{X^\circ}_{X,M}(\mc{?}{P})$, et on en déduit donc un isomorphisme
$$
\hocolim\, \LL F^{X^\circ}_{X,M}(\mc{?}{P})\simeq\hocolim M\,.
$$
D'autre part, en vertu du lemme~\ref{lem:colim}, on a un isomorphisme 
$$
\LL F_{X,M}\,\hocolim\,(\mc{?}{P})\simeq\LL F_{X,M}(P,p)=\HP(X,M)\,,
$$
ce qui achève la démonstration.
\end{proof}

\begin{cor} 
La restriction à $\Cat$ de la classe des équivalences faibles polygraphiques de $\ooCat$ \emph{(cf. paragraphe~\ref{def:eqpol})} coïncide avec la classe des équivalences de Thomason de $\Cat$.
\end{cor}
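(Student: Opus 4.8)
Le plan est de se ramener au th\'eor\`eme~\ref{thtopol} en passant au nerf. Soit $f\colon X'\to X$ un foncteur entre petites cat\'egories. Comme pour une $1$-cat\'egorie le nerf de Street co\"incide avec le nerf usuel, dire que $f$ est une \'equivalence de Thomason, c'est dire que $N(f)$ est une \'equivalence faible simpliciale; il s'agit donc de montrer que $f$ est une \'equivalence polygraphique (au sens du paragraphe~\ref{def:eqpol}) si et seulement si $N(f)$ est une \'equivalence faible simpliciale.

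On commence par la condition sur les groupo\"ides fondamentaux. D'apr\`es le paragraphe~\ref{NerfStreet}, pour toute cat\'egorie $X$ le groupo\"ide fondamental $\Pi_1(X)$ est canoniquement isomorphe, fonctoriellement en $X$, au groupo\"ide fondamental de l'ensemble simplicial $N(X)$; par cons\'equent $\Pi_1(f)$ est une \'equivalence de groupo\"ides si et seulement si $\Pi_1(N(f))$ en est une, et un \SL{} sur $X$ est la m\^eme chose qu'un \SL{} sur $N(X)$. On transporte ensuite la condition homologique le long du nerf: fixons un \SL{} $M$ sur $X$. Par construction du morphisme de comparaison $\HT(-,M)\to\HP(-,M)$ (voir le d\'ebut de la section~\ref{Leonard2}) et fonctorialit\'e de $\HP$, le carr\'e
$$
\xymatrix{
\HT(X',f^*(M))\ar[r]\ar[d]_{\HT(N(f),M)}
&\HP(X',f^*(M))\ar[d]^{\HP(f,M)}
\\
\HT(X,M)\ar[r]
&\HP(X,M)
}
$$
est commutatif (on rappelle que $\HT(X,M)=\HT(N(X),M)$ par d\'efinition), et en vertu du th\'eor\`eme~\ref{thm:Leonard} ses fl\`eches horizontales sont des isomorphismes. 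Par suite, $\HP(f,M)$ est un isomorphisme dans $\Der(\Ab)$ --- autrement dit $\Hp{n}(f,M)$ est un isomorphisme pour tout $n\geq0$ --- si et seulement si $\H_n(N(f),M)$ est un isomorphisme pour tout $n\geq0$.

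En combinant ces deux r\'eductions: $f$ est une \'equivalence polygraphique si et seulement si $\Pi_1(N(f))$ est une \'equivalence de groupo\"ides et $\H_n(N(f),M)$ est un isomorphisme pour tout \SL{} $M$ sur $N(X)$ et tout $n\geq0$, ce qui, en vertu de l'\'equivalence des conditions~(a) et~(b) du th\'eor\`eme~\ref{thtopol}, \'equivaut \`a dire que $N(f)$ est une \'equivalence faible simpliciale, c'est-\`a-dire que $f$ est une \'equivalence de Thomason. Il n'y a pas de v\'eritable difficult\'e: le corollaire est une cons\'equence formelle des th\'eor\`emes~\ref{thm:Leonard} et~\ref{thtopol}, le seul point m\'eritant attention \'etant que le paragraphe~\ref{def:eqpol} ne fait intervenir que les \SLS{} (et non les \SLFS{} arbitraires), ce qui correspond exactement \`a l'hypoth\`ese de la condition~(b) du th\'eor\`eme~\ref{thtopol} --- pour des \SLFS{} quelconques l'\'equivalence tomberait en d\'efaut.
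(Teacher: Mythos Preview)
Your proof is correct and follows essentially the same approach as the paper's: reduce to the characterisation of simplicial weak equivalences in th\'eor\`eme~\ref{thtopol} via the identification $\Pi_1(X)\simeq\Pi_1(N(X))$ and the fact that the comparison morphism $\HT(N(X),M)\to\HP(X,M)$ is an isomorphism for $1$-categories.

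One small point: you attribute the horizontal isomorphisms in your square solely to th\'eor\`eme~\ref{thm:Leonard}, but that theorem only gives $\HP(X,M)\simeq\hocolim_{x\in X^\circ}M_x$. To conclude that the \emph{comparison morphism} itself is an isomorphism you also need proposition~\ref{comp2def} (which identifies $\HT(N(X),M)$ with the same homotopy colimit) together with the compatibility of these two isomorphisms with the comparison morphism --- precisely what the paper spells out in the paragraph introducing section~\ref{Leonard2}. This is a citation issue, not a mathematical gap.
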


\begin{proof}
Le corollaire résulte aussitôt de la proposition~\ref{comp2def} et du théorème~\ref{thm:Leonard}, qui impliquent que le morphisme de comparaison 
$$\HT(N(X),M)\to\HP(X,M)\,,$$ 
pour une (1-)catégorie $X$ et un \SL{} $M$ sur $X$, est un isomorphisme (voir le début de la section), de l'isomorphisme canonique $\Pi_1(N(X))\simeq\Pi_1(X)$, et du théorème~\ref{thtopol}.
\end{proof}

\begin{sch}\label{scholie}
On rappelle qu'une équivalence de Thomason de $\ooCat$ est un $\infty$\nbd-foncteur dont le nerf de Street est une équivalence faible simpliciale. On dispose ainsi de trois classes d'équivalences faibles dans $\ooCat$: les classes $\Wfolk$ des équivalences folk, $\Wpol$ des équivalences polygraphiques et $\Wthom$ des équivalences de Thomason. Le corollaire précédent signifie que $\Wpol\cap\Fl(\Cat)=\Wthom\cap\Fl(\Cat)$, où $\Fl(\Cat)$ désigne la classe des flèches de $\Cat$. Par ailleurs, $\Wfolk\subset\Wpol$ (en vertu du théorème~\ref{th:fond}) et $\Wfolk\subset\Wthom$ (en vertu de la proposition~3.6.2 de~\cite{LeonardTh}). 
\smallbreak

En revanche, on n'a aucune inclusion entre les classes $\Wpol$ et $\Wthom$, que ce soit dans un sens ou dans l'autre. En effet, soient $B$ la \og bulle\fg, $2$\nbd-catégorie ayant un seul objet, l'unité de cet objet comme seule $1$\nbd-flèche et $\mathbb N$ comme monoïde d'endomorphismes de cette unité, engendré par une $2$\nbd-cellule $x$, et $f:S^2\to B$ l'unique $2$\nbd-foncteur de source la sphère $S^2$ (cf.~paragraphe~\ref{ooCat}) qui envoie les deux $2$\nbd-cellules non triviales de $S^2$ sur $x$. Une vérification immédiate montre que pour tout système local $M$ sur $B$ (qui dans ce cas est simplement un groupe abélien) le morphisme $\C(f,M):\C(S^2,f^*(M))\to\C(B,M)$ est un quasi-isomorphisme, ce qui implique, puisque $B$ et~$S^2$ sont des polygraphes, que $f$ est une équivalence polygraphique. Or, le type d'homotopie (du nerf) de $B$ est un $K(\mathbb Z,2)$~\cite[Example~4.10]{AraThB}, tandis que $S^2$ a le type d'homotopie d'une sphère de dimension $2$~\cite[6.5.4]{LeonardTh}, et par suite, $f$ \emph{n'est pas} une équivalence de Thomason.
\smallbreak

D'autre part, le morphisme d'adjonction $g:c\,\mathrm{Sd}^2\mathrm{Ex}^2N(B)\to B$, où $\mathrm{Sd}$ désigne le foncteur de subdivision et $\mathrm{Ex}$ son adjoint à droite, est une équivalence de Thomason~\cite[corollaire~6.32]{DG},~\cite[corollaire~3.9]{Ara2Thom}. Or, en vertu du théorème~\ref{thm:compc}, l'homologie polygraphique de $c\,\mathrm{Sd}^2\mathrm{Ex}^2N(B)$ est isomorphe à l'homologie de l'ensemble simplicial $\mathrm{Sd}^2\mathrm{Ex}^2N(B)$, qui est isomorphe à celle de $N(B)$ (puisque $\mathrm{Sd}$ et $\mathrm{Ex}$ respectent les types d'homotopie), qui est un $K(\mathbb Z,2)$. On en déduit que pour tout entier positif pair~$n$, $\Hp{n}(c\,\mathrm{Sd}^2\mathrm{Ex}^2N(B),\mathbb Z)=\mathbb Z\,$. En revanche, comme $B$ est un $2$\nbd-polygraphe, pour tout $n>2$, $\Hp{n}(B,\mathbb Z)=0$, et par suite, $g$ \emph{n'est pas} une équivalence polygraphique. (Voir aussi~\cite[Proposition~4.5.3]{LeonardTh}.)
\end{sch}

Les équivalences polygraphiques de $\ooCat$ satisfont à un cas particulier du théorème~A de Quillen $\infty$\nbd-catégorique~\cite{DGAI},~\cite{DGAII}, le même que celui satisfait par les équivalences folk~\cite[Proposition~9.2]{Leonard}. Pour le montrer, commençons par le lemme suivant:

\begin{lemme}\label{lem:thA}
Soit
$$
\xymatrixcolsep{1pc}
\xymatrix{
X_1\ar[rr]^f\ar[dr]_{p^{}_1}
&&X_2\ar[dl]^{p^{}_2}
\\
&X
}$$
un triangle commutatif dans $\ooCat$. On suppose que $X$ est une \emph{(1-)}catégorie, et que pour tout objet $x$ de $X$ et tout \SLF{} \emph{(resp.} \SL\emph{)} $M_x$ sur $\mc{x}{X_2}$ le morphisme $\HT(\mc{x}{f},M_x)$ de la catégorie dérivée des groupes abéliens est un isomorphisme. Alors pour tout \SLF{} \emph{(resp.} \SL\emph{)} $M$ sur $X_2$ le morphisme $\HT(f,M)$ est un isomorphisme.
\end{lemme}

\begin{proof}[Esquisse de preuve]
Soit donc $M$ un \SLF{} (resp. \SL) sur~$X_2$ et considérons le foncteur
$$
F=F_{X_2,M}:\cm{\ooCat}{X_2}\to\Comp(\Ab)\,,\qquad(X',p:X'\to X_2)\mapsto\C(X',p^*(M))\,,
$$
qui est un foncteur de Quillen à gauche (théorème~\ref{th:Quilleng}), de sorte qu'on a un carré commutatif à isomorphisme près
$$
\xymatrixcolsep{3pc}
\xymatrix{
\Ho(\Homint(X^\circ,\cm{\ooCat}{X_2}))\ar[r]^{\LL F^{X^\circ}_{X_2,M}}\ar[d]_{\shocolim}
&\Ho(\Homint(X^\circ,\Comp(\Ab)))\ar[d]^{\shocolim}
\\
\Ho(\cm{\ooCat}{X_2})\ar[r]_{\LL F_{X_2,M}}
&\Ho(\Comp(\Ab))\kern20pt,\kern-20pt
}$$
où $\LL F^{X^\circ}_{X_2,M}$ désigne le foncteur dérivé à gauche du foncteur
$$
\xymatrixcolsep{3pc}
\xymatrix{
\Homint(X^\circ,\cm{\ooCat}{X_2})\ar[r]^{\F^{X^\circ}_{X_2,M}}
&\Homint(X^\circ,\Comp(\Ab))
}$$
induit par $F_{X_2,M}$.
L'hypothèse du lemme implique que l'image par $\LL F^{X^\circ}_{X_2,M}$ du morphisme $\mc{?}{X_1}\to\mc{?}{X_2}$ de $\Homint(X^\circ,\cm{\ooCat}{X_2})$, induit par $f$, est un isomorphisme de $\Ho(\Homint(X^\circ,\Comp(\Ab)))$. Or, comme $X$ est une ($1$\nbd-)catégorie, en vertu du dual du théorème~7.10 de~\cite{Leonard} et du lemme~\ref{lemme:abstr}, l'image par la flèche verticale de gauche de ce morphisme s'identifie à $f$, vu comme morphisme au-dessus de $X_2$. On en déduit que l'image de ce dernier par $\LL F_{X_2,M}$ est un isomorphisme, ce qui prouve le lemme (cf.~remarque~\ref{retenir}). 
\end{proof}

\begin{prop}\label{prop:thA}
Soit
$$
\xymatrixcolsep{1pc}
\xymatrix{
X_1\ar[rr]^f\ar[dr]_{p^{}_1}
&&X_2\ar[dl]^{p^{}_2}
\\
&X
}$$
un triangle commutatif dans $\ooCat$. On suppose que $X$ est une \emph{(1-)}catégorie. Alors si pour tout objet $x$ de $X$ le $\infty$\nbd-foncteur $\mc{x}{f}:\mc{x}{X_1}\to\mc{x}{X_2}$ est une équivalence polygraphique, il en est de même de $f$.
\end{prop}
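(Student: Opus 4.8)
Le plan est de v\'erifier les deux conditions du paragraphe~\ref{def:eqpol} caract\'erisant les \'equivalences polygraphiques, \`a savoir que $\Pi_1(f):\Pi_1(X_1)\to\Pi_1(X_2)$ est une \'equivalence de groupo\"ides et que pour tout \SL{} $M$ sur $X_2$ le morphisme $\HP(f,M):\HP(X_1,f^*(M))\to\HP(X_2,M)$ est un isomorphisme de la cat\'egorie d\'eriv\'ee des groupes ab\'eliens. Par hypoth\`ese, chacune de ces deux propri\'et\'es est satisfaite par $\mc{x}{f}$ \`a la place de $f$, pour tout objet $x$ de $X$ (cf.~paragraphe~\ref{def:eqpol} appliqu\'e \`a $\mc{x}{f}$).

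La seconde condition est exactement la conclusion du lemme~\ref{lem:thA}. En effet, le fait que $\mc{x}{f}$ soit une \'equivalence polygraphique entra\^\i ne en particulier que $\HP(\mc{x}{f},M_x)$ est un isomorphisme pour tout \SL{} $M_x$ sur $\mc{x}{X_2}$; c'est pr\'ecis\'ement l'hypoth\`ese du lemme~\ref{lem:thA}, dans sa version pour un \SL{} (et non un \SLF{}), appliqu\'e au triangle commutatif de l'\'enonc\'e, dont la conclusion fournit que $\HP(f,M)$ est un isomorphisme pour tout \SL{} $M$ sur $X_2$.

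Pour la premi\`ere condition, l'id\'ee est de reprendre la preuve du lemme~\ref{lem:thA} en y rempla\c{c}ant le foncteur d'homologie polygraphique par le foncteur groupo\"ide fondamental $\Pi_1$ de $\ooCat$ vers la cat\'egorie des groupo\"ides. Ce foncteur est adjoint \`a gauche de l'inclusion des groupo\"ides; il pr\'eserve l'injectivit\'e sur les objets, donc envoie les cofibrations g\'en\'eratrices $S^{n-1}\hookrightarrow D_n$ de la structure folk sur des cofibrations de la structure de cat\'egorie de mod\`eles canonique sur les groupo\"ides, et il envoie les cofibrations triviales g\'en\'eratrices $D_n\hookrightarrow J_{n+1}$ sur des cofibrations triviales, celles-ci \'etant des \'equivalences folk et une \'equivalence folk induisant une \'equivalence des groupo\"ides fondamentaux (cf.~paragraphe~\ref{strfolk}). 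Ainsi $\Pi_1$ est un foncteur de Quillen \`a gauche; pour la m\^eme raison, il envoie \emph{toute} \'equivalence folk sur une \'equivalence de groupo\"ides, de sorte que son foncteur d\'eriv\'e total \`a gauche co\"\i ncide, \`a isomorphisme canonique pr\`es, avec le foncteur induit par $\Pi_1$ sur tout objet de $\ooCat$, et pas seulement sur les polygraphes. Comme $X$ est une $(1\hbox{-})$cat\'egorie, le th\'eor\`eme~7.10 de~\cite{Leonard}, appliqu\'e comme dans la preuve du lemme~\ref{lem:thA} et joint au lemme~\ref{lemme:abstr}, identifie pour $i=1,2$ l'objet $X_i$ \`a la \clh{} dans $\ooCat$ muni de la structure folk du foncteur $\mc{?}{X_i}:X^\circ\to\ooCat$, $x\mapsto\mc{x}{X_i}$, et le morphisme $f$ au morphisme induit entre ces \clhs{} par la transformation $\mc{?}{f}:\mc{?}{X_1}\to\mc{?}{X_2}$. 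Le foncteur d\'eriv\'e de $\Pi_1$ commutant aux \clhs, on en tire un isomorphisme $\Pi_1(X_i)\simeq\hocolim_{x\in X^\circ}\Pi_1(\mc{x}{X_i})$ identifiant $\Pi_1(f)$ \`a la \clh{} des morphismes $\Pi_1(\mc{x}{f})$; chacun de ces derniers \'etant une \'equivalence de groupo\"ides par hypoth\`ese, $\Pi_1(f)$ en est une. Les deux conditions du paragraphe~\ref{def:eqpol} \'etant alors v\'erifi\'ees, $f$ est une \'equivalence polygraphique.

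L'obstacle principal r\'eside dans cette derni\`ere partie: il faut \'etablir soigneusement que $\Pi_1$ est un foncteur de Quillen \`a gauche pour la structure folk, qu'il co\"\i ncide avec son foncteur d\'eriv\'e sur les objets a priori non cofibrants $X_i$ et $\mc{x}{X_i}$, et que les identifications de \clhs{} fournies par le th\'eor\`eme~7.10 de~\cite{Leonard} sont compatibles au morphisme $f$ et \`a la transformation $\mc{?}{f}$, exactement comme dans la preuve du lemme~\ref{lem:thA}; la partie homologique, quant \`a elle, est une application imm\'ediate de ce lemme.
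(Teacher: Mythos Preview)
Your proof is correct and follows the same overall decomposition as the paper: the homological condition is deduced directly from Lemma~\ref{lem:thA}, exactly as in the paper, and the $\Pi_1$ condition is treated separately.

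The one genuine difference is your treatment of $\Pi_1(f)$. The paper proceeds more briskly: it invokes the dual of Lemma~7.5 of~\cite{Leonard} to get a \emph{strict} colimit isomorphism $\varinjlim_{x\in X}\mc{x}{X_i}\simeq X_i$ (natural in~$X_i$), then uses that $\Pi_1$, being a left adjoint, commutes with colimits, and concludes directly. Your argument instead sets up $\Pi_1$ as a left Quillen functor, checks that it agrees with its total left derived functor on all objects (since it sends every folk equivalence to an equivalence of groupoids), and then uses the \emph{homotopy} colimit identification from~\cite[Theorem~7.10]{Leonard} and Lemma~\ref{lemme:abstr}, mirroring the proof of Lemma~\ref{lem:thA}. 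Your route is longer but makes explicit why a pointwise \emph{equivalence} (rather than isomorphism) of groupoids suffices, since homotopy colimits preserve weak equivalences; the paper's phrasing glosses over this distinction. Either way, the argument goes through.
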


\begin{proof}
En vertu du dual du lemme 7.5 de \cite{Leonard}, $X$ étant une (1-)catégorie, pour tout $\infty$\nbd-foncteur $p:X'\to X$, on a un isomorphisme $\varinjlim_{x\in X}\mc{x}{X'}\simeq X'$ naturel en $X$. Comme le foncteur $\Pi_1$ commute aux limites inductives, l'hypothèse que pour tout objet $x$ de $X$ le morphisme $\Pi_1(\mc{x}{f})$ est un isomorphisme, implique donc qu'il en est de même de $\Pi_1(X)$. La proposition résulte donc du lemme~\ref{lem:thA}. 
\end{proof}

\begin{rem}
En vertu du théorème~7.7 de~\cite{{DGAII}}, la proposition précédente (en tenant compte des propositions~\ref{prop:2sur3} et~\ref{retropl}) implique qu'on a une assertion analogue pour un $2$\nbd-triangle de la forme
\[
    \shorthandoff{;}
    \xymatrix@C=1.pc{
      X_1 \ar[rr]^f \ar[dr]_{p^{}_1}_{}="f" & & X_2 \ar[dl]^(0.42){p^{}_2} \\
      & X
      \ar@{}"f";[ur]_(.15){}="ff"
      \ar@{}"f";[ur]_(.55){}="oo"
      \ar@<-0.5ex>@2"ff";"oo"^{\alpha}
      &,
    }
\]
où $\alpha$ est une transformation oplax (et $X$ toujours une (1-)catégorie).
\end{rem}

\begin{rem}\label{remlocfond}
On rappelle qu'un localisateur fondamental de $\ooCat$~\cite[paragraphe~6.2.1]{AraHDR} est une classe $\W$ de flèches de $\ooCat$ satisfaisant les conditions suivantes:
\begin{itemize}
\item[(a)] $\W$ est faiblement saturée, c'est-à-dire contient les identités, vérifie la propriété du deux sur trois et contient les $\infty$\nbd-foncteurs $j$ admettant une rétraction $r$ telle que $jr$ soit dans $\W$;
\item[(b)] pour toute $\infty$\nbd-catégorie $C$, le $\infty$\nbd-foncteur de \og projection\fg{} $D_1\otimes C\to C$ appartient à $\W$;
\item[(c)] pour tout triangle commutatif de $\infty$\nbd-foncteurs
$$
\xymatrixcolsep{1pc}
\xymatrix{
X_1\ar[rr]^f\ar[dr]_{p^{}_1}
&&X_2\ar[dl]^{p^{}_2}
\\
&X
}$$
si pour tout objet $x$ de $X$ le $\infty$\nbd-foncteur $\mc{x}{f}:\mc{x}{X_1}\to\mc{x}{X_2}$, induit par $f$, est dans $\W$, il en est de même de $f$.
\end{itemize}
\smallbreak

La classe $\Wthom$ des équivalences de Thomason dans $\ooCat$ est un localisateur fondamental de $\ooCat$~\cite[proposition~6.2.3]{AraHDR}, et on conjecture qu'il s'agit du plus petit. En vertu des propositions~\ref{prop:2sur3} et~\ref{satfaible}, et du corollaire~\ref{projpol}, la classe $\Wpol$ des équivalences polygraphiques satisfait aux conditions (a) et (b) ci-dessus. La proposition~\ref{prop:thA} montre qu'elle satisfait également la condition (c) dans le cas particulier où $X$ est une (1\nbd-)catégorie (on dira que $\Wpol$ forme un \emph{$1$\nbd-localisateur fondamental} de $\ooCat$). En revanche, il ne semble pas que $\Wpol$ soit un localisateur fondamental, puisque en vertu du scholie~\ref{scholie}, cela contredirait la conjecture de minimalité de $\Wthom$.
\end{rem}

\appendix

\section{Une adjonction}\label{adj}

\begin{paragr}
Dans ce qui suit, on fixe une $\infty$\nbd-catégorie $X$ et un \SLF{} $M$ sur $X$. On va montrer que le foncteur
$$
\F=\F_{X,M}:\cm{\ooCat}{X}\to\Comp(\Ab)
$$ 
associant à une $\infty$\nbd-catégorie $(X',p:X'\to X)$ au-dessus de $X$ le complexe de groupes abéliens $\C(X',p^*(M))$ admet un adjoint à droite. Commençons par définir un foncteur dans l'autre sens $\G=\G_{X,M}:\Comp(\Ab)\to\cm{\ooCat}{X}$. Soit $N$ un complexe de groupes abéliens 
$$
\xymatrix{
\dots\ar[r]^-{d_{n+1}}
&N_n\ar[r]^-{d_{n}}
&N_{n-1}\ar[r]^-{d_{n-1}}
&\dots\ar[r]^-{d_{2}}
&N_1\ar[r]^-{d_{1}}
&N_0
}\ .
$$
On définit une $\infty$\nbd-catégorie $\G(N)$ comme suit:
\smallbreak

--- Pour $n\geq0$, une $n$\nbd-cellule de $\G(N)$ est un couple $(x,U)$, $x\in X_n$ et
$$
U=\begin{pmatrix}
u^0_0 &u^0_1 &\cdots &u^0_{n-1}
\\
\noalign{\vskip -5pt}
&&&&u^{}_n
\\
\noalign{\vskip -5pt}
u^1_0 &u^1_1 &\cdots &u^1_{n-1}
\end{pmatrix},
$$
où, en posant $u^0_n=u^1_n=u^{}_n$,
$$
u^0_i:M_{t_0(s_ix)}\to N_i\,,\quad u^1_i:M_{t_0(t_ix)}\to N_i\,,\qquad 0\leq i\leq n\,,
$$
sont des morphismes de groupes abéliens satisfaisant aux conditions:
$$\begin{aligned}
&d_i\circ u^\e_i=u^1_{i-1}-u^0_{i-1}\,,\qquad\ 1<i\leq n\,,\cr
&d_1\circ u^\e_1=u^1_0-u^0_0\circ t_1(x)^*\,.
\end{aligned}$$
Ces formules ont un sens car on observe qu'en vertu des relations globulaires, pour $0\leq i\leq n$, $\e=0,1$, et $(i,\e)\neq(0,0)$, tous les morphismes $u^\e_i$ ont même source $M_{t_0x}\,$, la source de $u^0_0$ étant $M_{s^{}_0x}\,$.

--- Les sources et buts sont définis par
$$\begin{aligned}
s_{n-1}(x,U)=(s_{n-1}x,s_{n-1}U)\,,\qquad\kern -3pt
s_{n-1}U&=\begin{pmatrix}
u^0_0 &\cdots &u^0_{n-2}
\\
\noalign{\vskip -5pt}
&&&u^0_{n-1}
\\
\noalign{\vskip -5pt}
u^1_0 &\cdots &u^1_{n-2}
\end{pmatrix},\cr
\noalign{\vskip 3pt}
t_{n-1}(x,U)=(t_{n-1}x,t_{n-1}U)\,,\qquad
t_{n-1}U&=\begin{pmatrix}
u^0_0 &\cdots &u^0_{n-2}
\\
\noalign{\vskip -5pt}
&&&u^1_{n-1}
\\
\noalign{\vskip -5pt}
u^1_0 &\cdots &u^1_{n-2}
\end{pmatrix}.\cr
\end{aligned}$$

--- Les unités sont définies par
$$
1_{(x,\,U)}=(1_x,1_U)\,,\qquad\qquad\kern 10pt
1_U=
\begin{pmatrix}
u^0_0 &\cdots &u^0_{n-1} &u_n
\\
\noalign{\vskip -5pt}
&&&&0
\\
\noalign{\vskip -5pt}
u^1_0 &\cdots &u^1_{n-1} &u_n
\end{pmatrix}.
$$
\smallbreak

--- Les compositions entre cellules composables sont définies par 
$$
(y,V)*_i(x,U)=(y*_ix,V*_iU)\,,
$$
où, pour $0<i<n$,
$$
V*_iU=
\begin{pmatrix}
u^0_0 &\cdots &u^0_i &v^0_{i+1}+u^0_{i+1} &\cdots &v^0_{n-1}+u^0_{n-1}
\\
\noalign{\vskip -5pt}
&&&&&&v_{n}+u_{n}
\\
\noalign{\vskip -5pt}
v^1_0 &\cdots &v^1_i &v^1_{i+1}+u^1_{i+1} &\cdots &v^1_{n-1}+u^1_{n-1}
\end{pmatrix},
$$
et, pour $i=0$,
$$
V*_0U=
\begin{pmatrix}
u^0_0 &v^0_{1}+u^0_{1}\circ t_1(y)^* &\cdots &v^0_{n-1}+u^0_{n-1}\circ t_1(y)^*
\\
\noalign{\vskip -5pt}
&&&&v_{n}+u_{n}\circ t_1(y)^*
\\
\noalign{\vskip -5pt}
v^1_0 &v^1_{1}+u^1_{1}\circ t_1(y)^* &\cdots &v^1_{n-1}+u^1_{n-1}\circ t_1(y)^*
\end{pmatrix}.
$$
Il est immédiat que ces formules définissent bien des cellules de $\G(N)$, la seule vérification non tautologique étant que pour ce dernier tableau $V*_0U$ on a la compatibilité à la différentielle pour les termes de degré $1$:
$$\begin{aligned}
d_1(v^\e_{1}+u^\e_{1}\circ t_1(y)^*)&=v^1_0-v^0_0\circ t_1(y)^*+u^1_{0}\circ t_1(y)^*-u^0_{0}\circ t_1(y)^*\circ t_1(x)^*\cr
&=v^1_0-u^0_{0}\circ t_1(x*_0y)^*
\end{aligned}$$
(la dernière égalité venant du fait que comme $(x,U)$ et $(y,V)$ sont $0$\nbd-composables, on a $u^1_0=v^0_0$). La vérification des axiomes des $\infty$\nbd-catégories est fastidieuse, mais directe. La projection $(x,U)\mapsto x$ définit un $\infty$-foncteur $\G(N)\to X$ et on obtient ainsi un foncteur $\G:\Comp(\Ab)\to\cm{\ooCat}{X}$.
\smallbreak

On définit un morphisme de foncteurs $\Eta:1_{\cm{\ooCat}{X}}\to\G\F$ comme suit. Soient $(X',\,p:X'\to X)$ une $\infty$\nbd-catégorie au-dessus de $X$ et $N$ le complexe $N=\F(X',p)=\C(X',p^*(M))$. Il s'agit de définir un $\infty$\nbd-foncteur $\Eta_{(X'\kern -1.5pt,\kern 1ptp)}:X'\to\G(N)$ au-dessus de $X$, naturel en $(X'\kern-1pt,p)$. Soient $n\geq0$ et $x'$ une $n$\nbd-cellule de $X'$. On pose 
$$
\Eta_{(X'\kern -1.5pt,\kern1ptp)}(x')=(p(x'),U_{x'})\,,
$$ 
où
$$
U_{x'}=\begin{pmatrix}
u^0_0{}_{,\kern.8ptx'} &u^0_1{}_{,\kern.8ptx'} &\cdots &u^0_{n-1}{}_{,\kern.8ptx'}
\\
\noalign{\vskip -5pt}
&&&&u^{}_{n,\kern.8ptx'}
\\
\noalign{\vskip -5pt}
u^1_0{}_{,\kern.8ptx'} &u^1_1{}_{,\kern.8ptx'} &\cdots &u^1_{n-1}{}_{,\kern.8ptx'}
\end{pmatrix},
$$
le morphisme $u^0_{i,x'}:M_{t_0(s_ip(x'))}\to N_i$ (resp.~$u^1_{i,x'}:M_{t_0(t_ip(x'))}\to N_i$), pour $0\leq i\leq n$, étant le composé
$$
M_{p(t_0y')}\hookrightarrow\textstyle\bigoplus\limits_{y'\in X'_i}\kern -5pt M_{p(t_0y')}\to\C_i(X',p^*(M))=N_i
$$
de l'inclusion canonique correspondant à $y'=s_i(x')$ (resp.~à~$y'=t_i(x')$), suivie de la surjection canonique (en remarquant que, pour $i=n$, on a $u^0_{n,x'}=u^1_{n,x'}$ et en posant $u^{}_{n,x'}=u^0_{n,x'}=u^1_{n,x'}$). On laisse au lecteur le soin de vérifier qu'on obtient bien ainsi un $\infty$\nbd-foncteur, $\Eta_{(X'\kern -1.5pt,\kern 1ptp)}:X'\to\G(N)$ naturel en $(X'\kern-1pt,p)$.
\smallbreak

De même, on définit un morphisme de foncteurs $\Eps:\F\G\to1_{\Comp(\Ab)}$ comme suit. Soient $N$ un complexe de groupes abéliens et $(X',p:X'\to X)=\G(N)$. Il s'agit de définir un morphisme de complexes $\Eps^{}_N:\C(X',p^*(M))\to N$, naturel en $N$. Soit $n\geq0$. On définit un morphisme de groupes abéliens $\Eps^{}_{N,n}:\C_n(X',p^*(M))\to N_n$ en posant, pour $(x,U,m)$ un générateur de $\C_n(X',p^*(M))$, autrement dit, $x\in X_n$,
$$
U=\begin{pmatrix}
u^0_0 &u^0_1 &\cdots &u^0_{n-1}
\\
\noalign{\vskip -5pt}
&&&&u^{}_n
\\
\noalign{\vskip -5pt}
u^1_0 &u^1_1 &\cdots &u^1_{n-1}
\end{pmatrix},
$$
$$
u^0_i:M_{t_0(s_ix)}\to N_i\,,\quad u^1_i:M_{t_0(t_ix)}\to N_i\,,\qquad 0\leq i\leq n\,,
$$
(toujours avec la convention $u^0_n=u^1_n=u_n$), et $m\in M_{t_0x}$,
$$
\Eps^{}_{N,n}(x,U,m)=u^{}_n(m).
$$
La compatibilité aux relations $(*_i)$, $0\leq i<n$, et aux différentielles, ainsi que la naturalité en $N$ sont tautologiques.
\end{paragr}

\begin{thm}
Le couple des foncteurs
$$
\F:\cm{\ooCat}{X}\to\Comp(\Ab)\,,\quad\G:\Comp(\Ab)\to\cm{\ooCat}{X}
$$
est un couple de foncteurs adjoints.
\end{thm}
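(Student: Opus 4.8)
The plan is to show that the natural transformations $\Eta\colon 1_{\cm{\ooCat}{X}}\to\G\F$ and $\Eps\colon\F\G\to1_{\Comp(\Ab)}$ constructed above are the unit and counit of an adjunction $\F\dashv\G$. Granting the (routine, left-to-the-reader) facts that each $\Eta_{(X',p)}$ is an $\infty$\nbd-foncteur over $X$, that each $\Eps_N$ is a morphism of complexes, and that both are natural, it suffices to verify the two triangle identities $\Eps\F\circ\F\Eta=1_{\F}$ and $\G\Eps\circ\Eta\G=1_{\G}$.

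For the first identity, fix $(X',p)$ and set $N=\F(X',p)=\C(X',p^*(M))$. A generator of $\C_n(X',p^*(M))$ is a class $(x',m)$ with $x'\in X'_n$ and $m\in M_{t_0x'}$; the composite $\Eps_N\circ\F(\Eta_{(X',p)})$ sends it first to $\bigl((p(x'),U_{x'}),m\bigr)$ and then to $u_{n,x'}(m)$, the value at $m$ of the top component of the matrix $U_{x'}$. By the definition of $\Eta_{(X',p)}$ (case $y'=s_nx'=t_nx'=x'$), this top component is the composite of the canonical inclusion $M_{p(t_0x')}\hookrightarrow\bigoplus_{y'\in X'_n}M_{p(t_0y')}$ indexed by $x'$ with the canonical surjection onto $\C_n(X',p^*(M))$, whose value at $m$ is precisely the class $(x',m)$. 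Hence $\Eps_N\circ\F(\Eta_{(X',p)})=1_N$.

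For the second identity, fix $N$ and write $(X',p)=\G(N)$; an $n$\nbd-cellule of $X'$ is a pair $(x,U)$ with $x\in X_n$ and $U=(u^\e_i)_{0\le i\le n}$ as in the construction, and $\G$ acts on a morphism $h$ of complexes by $(x,U)\mapsto(x,hU)$ with $(hU)^\e_i=h_i\circ u^\e_i$. The functor $\Eta_{\G(N)}$ sends $(x,U)$ to $(x,V)$, where $v^0_i$ (resp.\ $v^1_i$) is the composite of the inclusion indexed by $s_i(x,U)=(s_ix,s_iU)$ (resp.\ by $t_i(x,U)=(t_ix,t_iU)$) with the surjection onto $\C_i(\G(N),p^*(M))$; applying $\G(\Eps_N)$ postcomposes each $v^\e_i$ with $\Eps_{N,i}$. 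Since $\Eps_{N,i}$ sends the class of $\bigl((s_ix,s_iU),m\bigr)$ to the value at $m$ of the top component of $s_iU$, and since a short induction on $n-i$ from the formulas for $s_{n-1}U$ and $t_{n-1}U$ shows that the top component of $s_iU$ is $u^0_i$ and that of $t_iU$ is $u^1_i$, we get $\Eps_{N,i}\circ v^0_i=u^0_i$ and $\Eps_{N,i}\circ v^1_i=u^1_i$, i.e.\ $\G(\Eps_N)$ sends $(x,V)$ back to $(x,U)$. Thus $\G(\Eps_N)\circ\Eta_{\G(N)}=1_{\G(N)}$.

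The only non-formal ingredient is this last inductive identification of the top components of the iterated sources and targets of a matrix in $\G(N)$; everything else is bookkeeping. An alternative route avoiding the triangle identities is to produce the natural bijection $\Hom_{\Comp(\Ab)}(\F(X',p),N)\cong\Hom_{\cm{\ooCat}{X}}((X',p),\G(N))$ directly: by the universal property of $\C_n(X',p^*(M))$ as a quotient of $\bigoplus_{x'\in X'_n}M_{t_0x'}$, the left side amounts to a family of morphisms $M_{t_0x'}\to N_n$ (one per $n$\nbd-cellule $x'$) compatible with the relations $(*_i)$ and the differential, while on the right side an $\infty$\nbd-foncteur into $\G(N)$ over $X$ is determined by the top components of its matrices (the lower ones being forced by the source/target formulas of $\G(N)$), subject to compatibilities matching the former term by term — $(*_0)$ with the twisted rule $V*_0U$, $(*_{n-1})$ together with Remarque~\ref{remunit} with the unit rule, and the differential of $\C(X',p^*(M))$ with the globular conditions defining $\G(N)$. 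Either way, the main obstacle is combinatorial rather than conceptual.
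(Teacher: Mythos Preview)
Your proposal is correct and follows exactly the approach of the paper, which simply states that an easy verification left to the reader shows that $\Eps$ and $\Eta$ satisfy the triangle identities. You have carried out this verification in detail, including the one nontrivial point (the identification of the top component of $s_iU$ and $t_iU$), and your alternative description via the hom-set bijection is a pleasant bonus.
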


\begin{proof}
Une vérification facile laissée au lecteur montre que les morphismes de foncteurs $\Eps$ et $\Eta$ satisfont aux égalités triangulaires.
\end{proof}

\section{Homologie et version abélienne de la construction de Grothendieck \hbox{\pdfoo-catégorique}}

On rappelle que le foncteur d'oubli $\Ab\to\Ens$ de la catégorie des groupes abéliens vers celle des ensembles admet un adjoint à gauche $\AB$, le \emph{foncteur d'abélianisation} associant à un ensemble le groupe abélien libre engendré par cet ensemble. De même, le foncteur d'oubli $\ooCat(\Ab)\to\ooCat$ des objets $\infty$\nbd-catégories dans $\Ab$ vers celle des $\infty$-catégories admet un foncteur adjoint à gauche noté aussi $\AB$ et appelé \emph{foncteur d'abélianisation}. On rappelle également que la catégorie $\ooCat(\Ab)$ est canoniquement équivalente à celle des complexes de chaînes de groupes abéliens $\Comp(\Ab)$~\cite{Bourn}. Modulo cette identification le foncteur d'abélianisation n'est autre que le foncteur
$$\ooCat\to\Comp(\Ab)\,,\qquad X\mapsto\C(X,\mathbb Z)\,$$
(cf. paragraphe~\ref{relfond}), où $\mathbb Z$ désigne le \SL{} constant de valeur $\mathbb Z$~\cite{Steiner},~\cite{Leonard}.
\smallbreak

Il n'est pas vrai que, pour un \SLF{} ou même un \SL{} $M$ arbitraire sur une $\infty$\nbd-catégorie $X$, le complexe $\C(X,M)$ soit toujours obtenu comme l'abélianisation d'une $\infty$\nbd-catégorie. En effet, le groupe abélien $\C_0(X,M)$ n'est pas en général un $\mathbb Z$\nbd-module libre. Néanmoins, cela est vrai dans le cas particulier important où le $\infty$\nbd-foncteur $M:X^\circ\to\Ab$ se factorise par le foncteur d'abélianisation $\AB:\Ens\to\Ab$.
\smallbreak

En effet, soit $\E:X^\circ\to\Ens$ un $\infty$\nbd-foncteur. 
Cela revient à se donner pour tout objet $x$ de $X$, un ensemble $\E_x$ et pour toute $1$\nbd-flèche $x:x_0\to x_1$ de $X$, une application $x^*=E_x:E_{x_1}\to E_{x_0}$ tels que pour toute $2$\nbd-flèche $x:x_0\Rightarrow x_1$
de $X$, on ait $x^*_0=x^*_1$, pour tout couple de $1$\nbd-flèches composables $x_0,x_1$ de $X$, on ait $(x_1*_0x_0)^*=x_0^*x_1^*$, et pour tout objet $x$ de $X$, on ait $1^*_x=1_{E_x}$. On définit une $\infty$\nbd-catégorie $\int\E$ comme suit: Pour $n\geq0$, une $n$\nbd-cellule de $\int\E$ est un couple $(x,a)$, avec $x\in X_n$ et $a\in \E_{t_0x}$. Pour $n>0$, les buts et sources d'une telle cellule sont définis par les formules:
$$
t_{n-1}(x,a)=(t_{n-1}(x),a)\,,\qquad s_{n-1}(x,a)=\left\{\begin{aligned}
&(s_{n-1}(x),a)\,,\quad\kern 5.9pt n>1\,,\cr
&(s_{0}(x),x^*(a))\,,\quad n=1\,,
\end{aligned}
\right.
$$
et pour $n\geq0$, l'unité est définie par la formule $1_{(x,a)}=(1_x,a)$. 
Si $(x_0,a_0)$ et $(x_1,a_1)$ sont deux $n$\nbd-cellules $i$\nbd-composables, pour $0\leq i<n$, leur composé est donné par
$$
(x_1,a_1)*_i(x_0,a_0)=(x_1*_ix_0,a_1)\,,
$$
où on remarque que si $i>0$, alors $a_0=a_1$, et si $i=0$, on a $a_0=t_1(x_1)^*(a_1)$.
Si $M$ désigne le \SLF{} composé
$$\xymatrix{
X^\circ\ar[r]^-{\E}
&\Ens\ar[r]^-{\AB}
&\Ab
}$$
du foncteur $\E$ suivi du foncteur d'abélianisation, on vérifie aussitôt que le complexe $\C(X,M)$ est l'abélianisation de la $\infty$\nbd-catégorie $\Int \E$.
\smallbreak

La formation de $\int\E$ est un cas très dégénéré de la construction de Grothendieck $\infty$\nbd-catégorique~\cite{Warren},~\cite{AGG}, où le $\infty$\nbd-foncteur $X\to\ooCat$ est à valeurs des $\infty$\nbd-catégories discrètes, c'est-à-dire des ensembles. 
Ainsi, la formation du complexe $\C(X,M)$, pour un \SLF{} $M$ général, peut être vue comme une variante abélienne de cette construction, où les groupes abéliens $M_x$ sont considérés comme objets $\infty$\nbd-catégories \emph{discrètes} de $\Ab$, autrement dit, comme complexes réduits en degré $0$.   

\section{Homologie polygraphique d'une \pdfoo-catégorie faible}

On ne rappellera pas ici la définition des cohérateurs. On renvoie le lecteur à~\cite{MalCat},~\cite{MalGrCat},~\cite{Ara}.
\smallbreak

Fixons un cohérateur de $\infty$\nbd-catégories $C$. On rappelle que les objets de $C$ sont les sommes globulaires de la forme
$$
D_{i_1}\amalg_{D_{i'_1}}D_{i_2}\amalg_{D_{i'_2}}\cdots\amalg_{D_{i'_{n-1}}}D_{i_n}
$$
et qu'une $\infty$\nbd-$C$\nbd-catégorie est un préfaisceau sur $C$ transformant les sommes globulaires en produits fibrés. Les préfaisceaux représentables, et en particulier les disques $D_n$, $n\geq0$, sont des $\infty$\nbd-$C$\nbd-catégories. La catégorie $\ooCCat$ des $\infty$\nbd-$C$\nbd-catégories est la sous-catégorie pleine de $\pref C$ formée des $\infty$\nbd-$C$\nbd-catégories. On a donc des inclusions
$$
C\hookrightarrow\ooCCat\hookrightarrow\pref C\,.
$$
Pour $n>0$, la sphère $S^{n-1}$ est le sous-préfaisceau de $D_n$ dont les sections se factorisent par la cosource ou le cobut de $D_n$, et $S^{-1}$ est le préfaisceau vide sur $C$. Les polygraphes faibles se définissent de façon parfaitement analogue aux polygraphes stricts. Une résolution polygraphique d'une $\infty$\nbd-$C$\nbd-catégorie $X$ est un morphisme $p:P\to X$ de $\ooCCat$ de source un polygraphe faible ayant la propriété de relèvement à droite relativement aux inclusions $S^{n-1}\hookrightarrow D_n$. L'argument du petit objet montre que toute $\infty$\nbd-$C$\nbd-catégorie admet une résolution polygraphique.
\smallbreak

Dimitri Ara a construit dans~\cite{Ara}, pour tout cohérateur de $\infty$\nbd-catégories $C$, un foncteur canonique $u:C\to\Theta$ de but la catégorie $\Theta$ de Joyal~\cite{Joy},~\cite{MZ},~\cite{CB2},~\cite{CiMal} induisant l'identité sur les objets. Ce foncteur permet d'associer à toute $\infty$\nbd-catégorie stricte $X$ (vue comme un préfaisceau sur $\Theta$ transformant les sommes globulaires en produits fibrés) une $\infty$\nbd-$C$\nbd-catégorie $u^*(X)=X\circ u^\circ$, définissant ainsi un foncteur pleinement fidèle $u^*:\ooCat\to\ooCCat$ pouvant être considéré comme une inclusion. Ce foncteur admet un adjoint à gauche $u_!:\ooCCat\to\ooCat$, le \emph{foncteur de strictification}. 
\smallbreak

Le groupoïde fondamental d'une $\infty$\nbd-$C$\nbd-catégorie $X$ peut être défini comme pour les $\infty$\nbd-catégorie strictes, mais une définition équivalente plus rapide consiste à poser $\Pi_1(X):=\Pi_1(u_!(X))$. Un \emph{\SL} sur $X$ est un foncteur contravariant $M:\Pi_1(X)^\circ\to\Ab$, ou de façon équivalente un \SL{} sur la $\infty$\nbd-catégorie stricte $u_!(X)$. De même, la catégorie fondamentale d'une $\infty$\nbd-$C$\nbd-catégorie $X$ peut être définie directement, ou plus rapidement comme étant celle de $u_!(X)$, et un \emph{\SLF} sur $X$ comme étant un \SLF{} sur $u_!(X)$.
\smallbreak

Soient $X$ une $\infty$\nbd-$C$\nbd-catégorie et $M$ un \SLF. On définit l'homologie polygraphique de $X$ à coefficients dans $M$, en choisissant une résolution polygraphique $p:P\to X$ de $X$ dans $\ooCCat$ et en posant dans la catégorie dérivée des groupes abéliens 
$$
\HP(X,M):=\HP(u_!(P),p^*(M))
\,.$$
Autrement dit, l'homologie polygraphique de la $\infty$\nbd-$C$\nbd-catégorie $X$ à coefficients dans le \SLF{} $M$ est l'homologie polygraphique de la strictification d'une résolution polygraphique faible de $M$. Puisque le foncteur $u_!$ commute aux limites inductives, cette strictification est un polygraphe (strict), et son homologie polygraphique est donc définie par le complexe $\C(u_!(P),p^*(M))$.

\begin{conj}
L'homologie polygraphique $\HP(X,M)$ est indépendante, à isomorphisme canonique près, de la résolution choisie.
\end{conj}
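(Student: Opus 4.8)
Pour d\'emontrer cette conjecture, on commencerait par se ramener au cas d'une comparaison entre une fibration triviale faible et son but. \'Etant donn\'ees deux r\'esolutions polygraphiques $p:P\to X$ et $p':P'\to X$ dans $\ooCCat$, on formerait le produit fibr\'e $P\times_X P'$, et on en choisirait une r\'esolution polygraphique $r:Q\to P\times_X P'$. Comme la classe des morphismes de $\ooCCat$ ayant la propri\'et\'e de rel\`evement \`a droite relativement aux inclusions $S^{n-1}\hookrightarrow D_n$ est stable par images inverses et par composition, les compos\'es $q:Q\to P$ et $q':Q\to P'$ des projections avec $r$ seraient alors des fibrations triviales faibles de source le polygraphe faible $Q$, et les \SLFS{} $q^*p^*(M)$ et $q'^*p'^*(M)$ co\"incideraient sur $Q$, la composition $P\times_X P'\to P\to X$ \'etant \'egale \`a $P\times_X P'\to P'\to X$. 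Il suffirait donc de montrer que, pour toute fibration triviale faible $q:Q\to P$ entre polygraphes faibles et tout \SLF{} $\nu$ sur $P$, le foncteur de strictification $u_!$ induit un isomorphisme canonique $\HP(u_!(Q),q^*(\nu))\simeq\HP(u_!(P),\nu)$ dans la cat\'egorie d\'eriv\'ee des groupes ab\'eliens (en notant que $u_!$, commutant aux limites inductives, envoie les sph\`eres et les disques faibles sur leurs analogues stricts, donc les polygraphes faibles sur des polygraphes stricts): en composant les isomorphismes associ\'es \`a $q$ et \`a $q'$, on obtiendrait l'isomorphisme canonique cherch\'e entre $\HP(u_!(P),p^*(M))$ et $\HP(u_!(P'),p'^*(M))$.

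Pour traiter ce cas, on choisirait d'abord une section $s$ de $q$, ce qui est possible puisque $P$ est cofibrant et $q$ une fibration triviale faible. L'\'etape cl\'e serait de construire une transformation oplax \emph{faible} $\alpha$ de $sq$ vers $1_Q$ dans $\ooCCat$: gr\^ace \`a l'\'egalit\'e $qsq=q$, on l'obtiendrait en relevant le long de $q$, \`a la mani\`ere du lemme~\ref{reltransf}, la transformation oplax identique de $q$. Ceci pr\'esuppose toutefois, pour les $\infty$\nbd-$C$\nbd-cat\'egories faibles, l'existence d'un produit tensoriel (ou au moins d'un objet cylindre) $\boxtimes$ tel que, pour tout polygraphe faible $P$, l'inclusion $(D_0\amalg D_0)\boxtimes P\hookrightarrow D_1\boxtimes P$ soit une cofibration faible, g\'en\'eralisant ainsi le th\'eor\`eme~5.6 de~\cite{DimMax}. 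En appliquant ensuite $u_!$, et en utilisant une compatibilit\'e convenable de $u_!$ \`a ces produits tensoriels --- sous la forme d'un isomorphisme, ou au moins d'un morphisme de comparaison appropri\'e, $u_!(D_1\boxtimes P)\simeq D_1\otimes u_!(P)$, o\`u $\otimes$ d\'esigne le produit de Gray $\infty$\nbd-cat\'egorique strict --- on transformerait $\alpha$ en une transformation oplax \emph{stricte} de $u_!(s)u_!(q)$ vers $1_{u_!(Q)}$, l'\'egalit\'e $u_!(q)u_!(s)=1_{u_!(P)}$ \'etant par ailleurs imm\'ediate.

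On conclurait alors gr\^ace au corollaire~\ref{retris}, appliqu\'e \`a la r\'etraction $u_!(q)$ de $u_!(s)$ et \`a la transformation oplax $u_!(s)u_!(q)\Rightarrow 1_{u_!(Q)}$. Lorsque $\nu=M$ est un \SL{} (et pas seulement un \SLF), $q^*(M)$ en est un \'egalement, et la remarque~\ref{remastbis} assure que les hypoth\`eses du corollaire sont v\'erifi\'ees; celui-ci entra\^inerait alors que $\HP(u_!(q),\nu):\HP(u_!(Q),q^*(\nu))\to\HP(u_!(P),\nu)$ est un isomorphisme. Pour un \SLF{} $\nu$ quelconque, le m\^eme raisonnement vaudrait \`a condition de pouvoir choisir $\alpha$ de sorte que la transformation stricte obtenue v\'erifie encore l'hypoth\`ese du corollaire~\ref{retris} --- par exemple, qu'elle soit triviale sur les objets ---, ce qui, comme dans la preuve (en section~3) du lemme sur les cofibrations triviales g\'en\'eratrices de la structure folk, exigerait un choix plus soign\'e du cylindre $D_1\boxtimes P$. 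Quant \`a la canonicit\'e, elle r\'esulterait de ce que deux syst\`emes de choix $(Q,r,s,\alpha)$ se comparent par une r\'esolution polygraphique commune de leurs produits fibr\'es au-dessus de $P\times_X P'$, et de ce que deux $\infty$\nbd-foncteurs stricts homotopes induisent le m\^eme morphisme en homologie polygraphique (lemme~\ref{lemme:3.12}).

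L'obstacle principal serait le d\'eveloppement de la partie de la th\'eorie de l'homotopie des $\infty$\nbd-$C$\nbd-cat\'egories faibles requise ici: d'une part, l'existence d'un objet cylindre fonctoriel poss\'edant la propri\'et\'e de cofibrance voulue pour les polygraphes faibles; d'autre part, et surtout, sa compatibilit\'e au foncteur de strictification $u_!$, de mani\`ere que les homotopies faibles se strictifient effectivement en transformations oplax strictes. C'est pr\'ecis\'ement cet ingr\'edient structurel qui fait d\'efaut, ce qui explique le statut conjectural de l'\'enonc\'e.
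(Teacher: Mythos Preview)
L'\'enonc\'e en question est une \emph{conjecture} de l'appendice~C, et le papier n'en fournit aucune d\'emonstration. Tu as donc bien per\c{c}u la nature de l'\'enonc\'e: ta proposition n'est pas une preuve mais une strat\'egie, et tu identifies toi-m\^eme, dans ton dernier paragraphe, l'obstacle qui emp\^eche de la mener \`a bien --- l'absence d'un objet cylindre pour les $\infty$\nbd-$C$\nbd-cat\'egories faibles compatible \`a la strictification $u_!$. Le papier ne propose pas de strat\'egie alternative; ta r\'eduction au cas d'une fibration triviale faible entre polygraphes faibles, suivie d'une tentative de transporter l'argument du corollaire~\ref{retris} via $u_!$, est une approche raisonnable et conforme \`a l'esprit de la section~3, mais elle repose effectivement sur des ingr\'edients structurels (produit de Gray faible, compatibilit\'e de $u_!$ aux cylindres, propri\'et\'e de cofibrance de l'inclusion des extr\'emit\'es) qui ne sont pas \'etablis dans la litt\'erature actuelle sur les $\infty$\nbd-cat\'egories de Grothendieck. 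Il n'y a donc rien \`a comparer avec le papier au-del\`a de ce constat.
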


Soit maintenant $X$ une $\infty$\nbd-catégorie stricte et $M$ un \SLF{} sur $X$. L'homologie polygraphique de $X$, vue comme $\infty$\nbd-catégorie faible, autrement dit celle de $u^*(X)$, est donc définie en choisissant une résolution polygraphique $p:P\to u^*(X)$ de $u^*(X)$ dans $\ooCCat$ et en posant $\HP(u^*(X),M)=\HP(u_!(P),p^*(M))$. Le composé
$$
\xymatrix{
u_!(P)\ar[r]^-{u_!(p)}
&u_!u^*(X)\ar[r]
&X
}$$
de $u_!(p)$ suivi du foncteur d'adjonction définit un morphisme canonique dans la catégorie dérivée des groupes abéliens $\HP(u^*(X),M)\to\HP(X,M)$.

\begin{conj}
On a un isomorphisme canonique dans la catégorie dérivée des groupes abéliens $\HP(u^*(X),M)\simeq\HT(X,M)$ identifiant le morphisme canonique ci-dessus au morphisme de comparaison $\HT(X,M)\to\HP(X,M)$ de la section~\emph{\ref{Leonard2}}.
\end{conj}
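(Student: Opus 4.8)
Fixons une r\'esolution polygraphique faible $p : P \to u^*(X)$. Comme le foncteur de strictification $u_!$ commute aux limites inductives et que $u_!(D_n) = D_n$, $u_!(S^{n-1}) = S^{n-1}$, la $\infty$\nbd-cat\'egorie stricte $u_!(P)$ est un polygraphe ayant les m\^emes cellules ind\'ecomposables que $P$; notant $\bar p : u_!(P) \to X$ le compos\'e de $u_!(p)$ avec le morphisme d'adjonction $u_!u^*(X) \to X$ (isomorphisme puisque $u^*$ est pleinement fid\`ele), on a donc un isomorphisme canonique $\HP(u^*(X),M) \simeq \C(u_!(P), \bar p^*(M))$, le morphisme canonique vers $\HP(X,M)$ \'etant induit par $\bar p$ apr\`es rel\`evement \`a une r\'esolution polygraphique stricte de $X$. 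L'ind\'ependance de cette construction vis-\`a-vis du choix de $P$ (premi\`ere conjecture ci-dessus) se ram\`enerait d'ailleurs, via $u_!$, \`a l'invariance folk \'etablie au th\'eor\`eme~\ref{th:fond}. Il s'agit alors de construire la cha\^{\i}ne d'isomorphismes canoniques
$$
\C(u_!(P),\bar p^*(M)) \;\simeq\; \HT(u_!(P),M) \;\simeq\; \HT(X,M)
$$
et de v\'erifier que leur compos\'ee est compatible au morphisme de comparaison de la section~\ref{Leonard2}, ce qui ne rel\`eve que de la naturalit\'e des constructions des paragraphes~\ref{fonctHpX} et~\ref{fonctHpM}.

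Pour le second isomorphisme, l'ingr\'edient essentiel est que $\bar p$ est une \'equivalence folk. En effet $p$, \'etant une fibration triviale de source cofibrante pour la \og structure folk\fg{} sur $\ooCCat$, en est une \'equivalence faible, et le foncteur $u_!$ devrait transformer celles-ci en \'equivalences folk de $\ooCat$ — autrement dit $(u_!,u^*)$ devrait \^etre une adjonction de Quillen pour les structures folk, ce qui para\^{\i}t raisonnable puisque $u_!$ respecte les cofibrations g\'en\'eratrices $S^{n-1} \hookrightarrow D_n$. Une \'equivalence folk induisant une \'equivalence des cat\'egories fondamentales, $N(\bar p)$ est alors une \'equivalence faible simpliciale induisant une \'equivalence des cat\'egories fondamentales, et le th\'eor\`eme~\ref{thtopol} — ou, pour un \SLF{} g\'en\'eral, la variante correspondante assurant l'invariance de l'homologie sous de tels morphismes — fournit l'isomorphisme $\HT(u_!(P),M) = \HT(N(u_!(P)),M) \simeq \HT(N(X),M) = \HT(X,M)$.

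Reste le premier isomorphisme $\C(u_!(P),\bar p^*(M)) \simeq \HT(u_!(P),M)$, c'est-\`a-dire le fait que le morphisme de comparaison soit un isomorphisme pour le polygraphe strict $u_!(P)$: c'est le c\oe{}ur de la conjecture, et j'attends que ce soit l'obstacle d\'ecisif. En vertu du th\'eor\`eme~\ref{thm:compc}, il \'equivaut \`a montrer que le morphisme d'adjonction $cN(u_!(P)) \to u_!(P)$ est une \'equivalence polygraphique — ce qui est faux pour un polygraphe strict arbitraire (scholie~\ref{scholie}), et n\'ecessite donc d'exploiter que $u_!(P)$ provient d'une r\'esolution polygraphique \emph{faible}. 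L'approche que je privil\'egierais consiste \`a munir $\ooCCat$ d'une structure de cat\'egorie de mod\`eles \`a la Thomason, analogue de celle conjectur\'ee dans l'introduction sur $\ooCat$, dont les objets cofibrants soient des polygraphes faibles et pour laquelle $u_!$ soit de Quillen \`a gauche vers $(\ooCat,\Wthom)$ muni de la structure de Thomason $\infty$\nbd-cat\'egorique conjectur\'ee; les $\infty$\nbd-cat\'egories $u_!(P)$ en seraient alors des objets cofibrants. Le point d\'elicat serait ensuite que le foncteur d'ab\'elianisation \`a coefficients tordus $\C(-,M) : \ooCat \to \Comp(\Ab)$, qui est de Quillen \`a gauche pour la structure folk (th\'eor\`eme~\ref{th:Quilleng}), le reste pour cette structure de Thomason — c'est-\`a-dire qu'il envoie les cofibrations \og de Thomason\fg{} sur des monomorphismes et les cofibrations triviales sur des quasi-isomorphismes. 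Une fois ceci acquis, la valeur de $\C(-,M)$ sur l'objet cofibrant $u_!(P)$ calculerait d\'ej\`a son foncteur d\'eriv\'e total \`a gauche, donc l'homologie $\HT$, et la conclusion suivrait formellement, en parfaite analogie avec la d\'emonstration du th\'eor\`eme~\ref{thm:Leonard} (qui en est le cas o\`u $X$ est une $1$\nbd-cat\'egorie et $M$ un \SL).
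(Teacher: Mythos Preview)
L'�nonc� est une \emph{conjecture} de l'appendice~C: l'article ne fournit aucune d�monstration, et il n'y a donc pas de preuve de r�f�rence � laquelle comparer votre proposition. Votre texte est d'ailleurs honn�tement intitul� \og strat�gie envisag�e\fg, et identifie correctement le point nodal: montrer que le morphisme de comparaison est un isomorphisme pour le polygraphe strict $u_!(P)$, alors que le scholie~\ref{scholie} montre que c'est faux pour un polygraphe strict arbitraire.

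Cela dit, votre strat�gie empile des ingr�dients eux-m�mes conjecturaux, parfois plus forts que l'�nonc� vis�. D'une part, l'existence d'une structure de cat�gorie de mod�les folk sur $\ooCCat$ n'est pas �tablie dans l'article (seule l'existence de r�solutions polygraphiques l'est, via l'argument du petit objet); affirmer que $(u_!,u^*)$ est de Quillen pour de telles structures suppose d�j� r�solues des questions ouvertes. D'autre part, la structure de Thomason sur $\ooCat$ que vous invoquez est elle-m�me conjecturale (cf.~l'introduction et la remarque~\ref{remlocfond}), et son analogue sur $\ooCCat$ l'est \emph{a fortiori}. Enfin, votre derni�re �tape --- que $\C(-,M)$ reste de Quillen � gauche pour la structure de Thomason --- revient essentiellement � dire que l'homologie polygraphique co�ncide avec la \og vraie\fg{} homologie sur les objets Thomason-cofibrants, ce qui est pr�cis�ment le contenu de la conjecture dans le cadre strict. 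Notez aussi un point d�licat pour le cas d'un \SLF{} (et non d'un \SL): l'invariance de $\HT$ par �quivalences de Thomason n'est pas vraie en g�n�ral (remarque~1.7), et il faudrait donc exploiter que $\bar p$ induit une �quivalence des \emph{cat�gories} fondamentales, pas seulement des groupo�des.

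En r�sum�: vous cernez bien o� r�side la difficult�, mais la voie propos�e ne la contourne pas --- elle la reformule en termes de structures de mod�les dont l'existence est au moins aussi d�licate que la conjecture elle-m�me.
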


En d'autres termes, l'homologie polygraphique d'une $\infty$\nbd-catégorie stricte vue comme $\infty$\nbd-catégorie faible est isomorphe à sa \og vraie\fg{}
homologie.
\smallbreak

On définit la classe des équivalences polygraphiques de $\ooCCat$ exactement comme celles de $\ooCat$: un morphisme $f:X'\to X$ de $\ooCCat$ est une \emph{équivalence polygraphique} si $f$ induit une équivalence des groupoïdes fondamentaux et si pour tout système local $M$ sur $X$ le morphisme
$$\HP(f,M):\HP(X', f^*(M))\to\HP(X,M)$$
est un isomorphisme dans la catégorie dérivée des groupes abéliens.
\smallbreak

On dit qu'un morphisme  $f:X'\to X$ de $\ooCCat$ est une \emph{équivalence de Thomason} si le foncteur $\cm{C}{X'}\to\cm{C}{X}$, induit par $f$, entre les catégories des éléments de $X'$ et~$X$, vues comme préfaisceaux sur $C$, est une équivalence de Thomason de $\Cat$. Cette définition est raisonnable car conjecturalement tout cohérateur est une catégorie test, de sorte que $\pref C$ modélise les types d'homotopie et l'inclusion $\ooCCat\hookrightarrow\pref C$ peut être vue comme un foncteur nerf, analogue au nerf cellulaire pour les $\infty$\nbd-catégories strictes.

\begin{conj}
La classe des équivalences polygraphiques de $\ooCCat$ coïncide avec la classe des équivalences de Thomason.
\end{conj}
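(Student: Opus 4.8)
Le plan est de se ramener, comme dans le cas strict, aux deux inclusions $\Wthom\subset\Wpol$ et $\Wpol\subset\Wthom$. On v\'erifierait d'abord, par les arguments des propositions~\ref{prop:2sur3} et~\ref{satfaible} et du corollaire~\ref{retris}, que la classe $\Wpol$ des \'equivalences polygraphiques de $\ooCCat$ est faiblement satur\'ee, la classe $\Wthom$ l'\'etant parce que les \'equivalences de Thomason de $\Cat$ forment un localisateur fondamental. On r\'eduirait ensuite chacune des deux inclusions au cas o\`u $X'$ et $X$ sont des polygraphes faibles: ayant choisi des r\'esolutions polygraphiques $p':P'\to X'$ et $p:P\to X$ dans $\ooCCat$ et un rel\`evement $\bar f:P'\to P$ de $f$, il suffirait de constater que toute r\'esolution polygraphique de $\ooCCat$ appartient \`a $\Wpol\cap\Wthom$ --- \`a $\Wpol$ parce que $\HP$ est par construction calcul\'ee via une telle r\'esolution (et par l'analogue faible du th\'eor\`eme~\ref{th:fond}, en admettant la conjecture d'ind\'ependance ci-dessus), \`a $\Wthom$ parce que les morphismes ayant la propri\'et\'e de rel\`evement \`a droite relativement aux inclusions $S^{n-1}\hookrightarrow D_n$ sont des \'equivalences faibles de pr\'efaisceaux sur $C$.

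Le c{\oe}ur de la preuve --- et l'obstacle principal --- serait l'\'enonc\'e de comparaison suivant, forme renforc\'ee de la conjecture pr\'ec\'edente: pour tout polygraphe faible $P$ et tout \SL{} $M$ sur $P$, on aurait un isomorphisme canonique, fonctoriel en $P$,
\[
\HP(u_!(P),p^*(M))\simeq\HT(P,M)\,,
\]
o\`u le membre de droite d\'esigne l'homologie du type d'homotopie de $P$ --- vu comme pr\'efaisceau sur $C$, ce dernier \'etant conjecturalement une cat\'egorie test, de sorte que $\pref C$ mod\'elise les types d'homotopie et que l'inclusion $\ooCCat\hookrightarrow\pref C$ joue le r\^ole d'un foncteur nerf --- \`a coefficients dans le syst\`eme local induit (bien d\'efini gr\^ace \`a l'isomorphisme canonique entre $\Pi_1(P)$ et le groupo\"ide fondamental de ce type d'homotopie). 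C'est l'\'enonc\'e selon lequel les polygraphes faibles sont de \og bons\fg{} mod\`eles homologiques du type d'homotopie, contrairement aux polygraphes stricts (cf.~scholie~\ref{scholie}); en particulier il contient la conjecture pr\'ec\'edente (prendre pour $P$ une r\'esolution polygraphique faible de $u^*(X)$, avec $X$ stricte). Pour l'\'etablir, on combinerait le fait que le foncteur de strictification $u_!$ commute aux limites inductives, la description de l'homologie polygraphique d'un syst\`eme local libre via la construction de Grothendieck $\infty$\nbd-cat\'egorique de l'appendice~B, le th\'eor\`eme~\ref{thm:compc} reliant dans le cas strict l'homologie polygraphique d'une r\'ealisation $c(Y)$ \`a l'homologie de l'ensemble simplicial $Y$, et une analyse de la compatibilit\'e de $u_!$ aux \'equivalences de Thomason dans l'esprit des travaux de D.~Ara.

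Admettant cette comparaison, les deux inclusions d\'ecouleraient de la variante du th\'eor\`eme de Whitehead, transport\'ee de $\pref{\Catsmp}$ \`a $\pref C$ par la th\'eorie des cat\'egories tests --- transport dont la mise en place rigoureuse (v\'erifier que, $C$ \'etant une cat\'egorie test, l'homologie \`a coefficients dans un syst\`eme local du type d'homotopie d'une $\infty$\nbd-$C$\nbd-cat\'egorie co\"incide avec celle de Steenrod, et que son groupo\"ide fondamental au sens de cet appendice s'identifie canoniquement \`a celui de ce type d'homotopie) constitue un point technique \`a part enti\`ere. Pour $\Wthom\subset\Wpol$: si $f$ est une \'equivalence de Thomason, $\bar f$ induit un isomorphisme des types d'homotopie de $P'$ et $P$, d'o\`u une \'equivalence des groupo\"ides fondamentaux (donc $\Pi_1(f)$ est une \'equivalence) et, par le th\'eor\`eme~\ref{thtopol}, un isomorphisme sur l'homologie \`a coefficients dans tout \SL; la comparaison donne alors que $\HP(\bar f,M)$, donc $\HP(f,M)$, est un isomorphisme pour tout \SL{} $M$, et $f\in\Wpol$. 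R\'eciproquement, si $f\in\Wpol$, alors $\bar f$ induit une \'equivalence des groupo\"ides fondamentaux et, par la comparaison, un isomorphisme sur l'homologie du type d'homotopie \`a coefficients dans tout \SL{} $M$; le th\'eor\`eme~\ref{thtopol} entra\^inerait que l'application induite sur les types d'homotopie est une \'equivalence, c'est-\`a-dire $f\in\Wthom$.
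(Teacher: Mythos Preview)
The statement is a \emph{conjecture} in the paper, not a theorem: the authors state it without proof, as the last of three conjectures in Appendix~C, and explicitly signal that the surrounding framework (independence of $\HP$ from the chosen weak polygraphic resolution, the test-category property of coh\'erateurs) is itself conjectural. There is therefore no proof in the paper to compare your proposal against.

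Your sketch is a coherent outline of how one might attack the conjecture, and you are transparent that it rests on several unproven inputs. The key step you isolate --- the natural isomorphism $\HP(u_!(P),M)\simeq\HT(P,M)$ for every weak polygraph $P$ --- is indeed the heart of the matter; as you note, it strictly contains Conjecture~C.2, and combined with the Whitehead-type criterion it would yield both inclusions. But precisely because this step subsumes an open conjecture of the paper, your proposal is a reformulation of the problem rather than a proof: the reduction to polygraphs and the Whitehead argument are the easy parts, and everything hinges on the comparison isomorphism, for which you offer only a list of ingredients (commutation of $u_!$ with colimits, Appendix~B, Theorem~\ref{thm:compc}, compatibility of $u_!$ with Thomason equivalences) without an actual argument linking them. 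In particular, the analogue of Theorem~\ref{thm:compc} you would need --- that $u_!$ applied to a weak polygraph computes the ``right'' homology --- is exactly what fails in the strict setting (scholie~\ref{scholie}), so one cannot expect it to follow from formal properties of $u_!$ alone; some genuinely new input about weak coh\'erateurs is required, and your sketch does not supply it.
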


\backmatter

\providecommand{\bysame}{\leavevmode ---\ }
\providecommand{\og}{``}
\providecommand{\fg}{''}
\providecommand{\smfandname}{\&}
\providecommand{\smfedsname}{\'eds.}
\providecommand{\smfedname}{\'ed.}
\providecommand{\smfmastersthesisname}{M\'emoire}
\providecommand{\smfphdthesisname}{Th\`ese}

\end{document}